\documentclass{amsart}
\usepackage{graphicx} 
\usepackage{amsmath, amssymb, amsthm}
\usepackage{hyperref}
\usepackage{enumitem}
\usepackage{inputenc}
\usepackage{biblatex}

\usepackage{bbm}
\usepackage{pdfpages}
\usepackage{tcolorbox}
\usepackage{esint} 
\usepackage{amsmath,amssymb,amsthm}

\theoremstyle{plain}

\newtheorem{question}{Question}
\newtheorem{conjecture}{Conjecture}

\theoremstyle{definition}

\newtheorem*{acknowledgement}{Acknowledgement}

\newtheorem*{first key estimate}{First key estimate}
\newtheorem*{second key estimate}{Second key estimate}

\newcommand{\bbS}{\mathbb{S}}
\newcommand{\R}{\mathbb{R}}

\newcommand{\Z}{\mathbb{Z}}

\newcommand{\cC}{\mathcal{C}}
\newcommand{\cD}{\mathcal{D}}
\newcommand{\cE}{\mathcal{E}}

\newcommand{\cG}{\mathcal{G}}

\newcommand{\cR}{\mathcal{R}}
\newcommand{\cS}{\mathcal{S}}

\newcommand{\supp}{\mathrm{supp}\,}

\newtheorem{theorem}{Theorem}[section]
\newtheorem{lemma}[theorem]{Lemma}
\newtheorem{remark}[theorem]{Remark}
\newtheorem{definition}[theorem]{Definition}
\newtheorem{corollary}[theorem]{Corollary}
\newtheorem{proposition}[theorem]{Proposition}

\numberwithin{equation}{section}

\title{Euler-Lagrange Flow Monotonicity for the Radon Transform}
\author{Yannis Galanos}
\date{June 2026}
\address{Yannis Galanos, School of Mathematics and Maxwell Institute for Mathematical Sciences, University of Edinburgh, James Clerk Maxwell Building, Peter Guthrie Tait Road, King's Buildings, Edinburgh EH9 3FD, United Kingdom}
\email{I.Galanos@sms.ed.ac.uk}

\bibliography{references}

\begin{document}

\maketitle

\begin{abstract}
    We present a new gradient-like flow that targets critical points of the inequality for the planar Radon transform $\cR$ proven by Oberlin and Stein \cite{Oberlin-Stein}, partially answering a question of Christ, who gave a complete characterisation of the extremisers of the inequality via rearrangement methods. We show that for non-negative radial initial data near the extremiser manifold, solutions to the flow exist globally and there exists a rescaled subsequence of the flow that converges to a critical point of the Radon functional $\|\cR f\|_3/\|f\|_{3/2}$ locally in $L^{3/2}(\R^2)$. We also obtain a quantitative result for the rate of decay of critical points, which complements a result of Christ \cite{Christ} in the special case of radial functions. We prove a novel localised smoothing estimate for the operator $\cR^*[(\cR\cdot)^2]$ restricted to radial functions.  This approach is complementary to that of Christ and provides a robust framework for studying extremal problems associated to Radon-like operators.
\end{abstract}

\section{A Variational Perspective on Radon Extremisers}
Characterising the extremisers of the classical Radon transform $L^{3/2}(\R^2)\to L^3(\R^2)$ inequality of Oberlin and Stein \cite{Oberlin-Stein} is a problem with rich history that goes back to Christ \cite{Christ}. Christ \cite{Christ} characterised all extremisers of the Radon transform in $\R^n$, using rearrangement inequalities and Steiner symmetrisation. These techniques utilise the rich symmetry structure of the Radon transform, which includes all affine symmetries and ellipsoidal symmetries (the latter are not relevant in the context of the present work and thus will not be mentioned further; we refer the interested reader to \cite{Christ} for details). Christ also posed the question of whether the extremal problem for the Radon transform can be studied using variational methods such as the ones used by Carlen, Carrillo and Loss \cite{Carlen-Carrillo-Loss} to study the extremal problem for the Hardy--Littlewood--Sobolev inequality. In fact, the two extremal problems are related.\\\\
In this paper, we give a partial answer to Christ's question. We can tackle the extremal problem for the classical Radon transform $\cR$ in $\R^2$, by constructing a flow that targets critical points of the Radon functional $f\mapsto\|\cR f\|_3/\|f\|_{3/2}$. \textit{However}, the variational mechanism we present is very different from the one introduced in \cite{Carlen-Carrillo-Loss}. In fact, the Radon functional does not have an obvious monotonicity property along the fast diffusion equation which was used in \cite{Carlen-Carrillo-Loss}. Instead, we introduce a gradient-like flow that is specifically designed to approach critical points of the Radon functional $f\mapsto\|\cR f\|_3/\|f\|_{3/2}$. We show that the associated initial value problem is globally well-posed and that for non-negative radial initial data sufficiently close to the extremiser manifold, after suitable rescaling it converges subsequentially to a non-negative radial critical point of $f\mapsto\|\cR f\|_3/\|f\|_{3/2}$. We also prove that every radial critical point of the latter is asymptotically equivalent to the unique radial extremiser.\\\\
Let us recall the classical definition of the Radon transform:
$$\mathcal{R}f(\alpha,s):=\int_{x\cdot\alpha=s}f(x)d\sigma(x),$$
for $\alpha\in\bbS^1, s\in\R$, where $d\sigma(x)$ is the surface measure on the line $x\cdot\alpha=s$. We will also consider the adjoint of $\cR$, given by
\[\cR^*g(x)=\int_{\bbS^1} g(\alpha,x\cdot\alpha)\ d\alpha, \]
where $g:\bbS^1\times\R\to \R$ and $d\alpha$ is the Hausdorff measure on $\bbS^1$. Oberlin and Stein \cite{Oberlin-Stein} proved that there exists a finite constant $C>0$ such that
\begin{equation}\label{eq:Oberlin-Stein}
    \|\cR f\|_{L^3(\bbS^1\times\R)}\leq C\|f\|_{L^{3/2}(\R^2)},
\end{equation}
for all $f\in L^{3/2}(\R^2)$. Let $\|\cR\|$ be the optimal constant in \eqref{eq:Oberlin-Stein}. We define the manifold of extremisers of $\cR$ as 
$$\mathcal{E}:=\left\{ f\in L^{3/2}(\R^2):\ \frac{\|\cR f\|_3}{\|f\|_{3/2}}=\|\cR\| \right\}. $$
Christ \cite{Christ-Extremals, Christ-k-planes} proved that $\mathcal{E}$ is non-empty. In a subsequent work \cite{Christ}, Christ characterised the extremal set $\cE$. The following is a special case of his result.
\begin{theorem}[Christ \cite{Christ}, Theorem $1.3$]\label{thm:Christ}
    A non-negative function $f$ is an extremiser if and only if
    \[f(x)=\frac{c}{1+|\phi(x)|^2},\]
    for some constant $c>0$ and invertible affine map $\phi:\R^2\to \R^2$. 
\end{theorem}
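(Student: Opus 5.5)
We would follow Christ's rearrangement strategy, specialised to the plane, and split the argument into the ``if'' direction (symmetry plus identification of one extremiser) and the ``only if'' direction (rigidity).

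\textbf{The ``if'' direction.} We first record the symmetries of the problem. For an invertible affine map $\phi$, the identity $\cR(f\circ\phi)$ is a reparametrisation of $\cR f$ on $\bbS^1\times\R$ with an explicit Jacobian factor. At the exponents $3/2\to3$ this factor cancels in the ratio $\|\cR f\|_3/\|f\|_{3/2}$, and scalar multiples obviously preserve the ratio too. So it suffices to show that $f_0(x)=(1+|x|^2)^{-1}$ is an extremiser.

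For that we use existence of extremisers (Christ's earlier result, stated above) together with Steiner symmetrisation. By the Riesz--Sobolev/Brascamp--Lieb--Luttinger inequality applied to the trilinear form $\int (\cR f)^3=\langle f, \cR^*[(\cR f)^2]\rangle$, we get $\|\cR f^\star\|_3\ge \|\cR f\|_3$ for the Steiner symmetrisation $f^\star$ in any direction. Iterating over directions, $f$ may be replaced by its radial decreasing rearrangement. Hence some radial nonincreasing extremiser exists.

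Such an extremiser satisfies the Euler--Lagrange equation
\[\cR^*[(\cR f)^2]=\lambda f^{1/2}.\]
We check directly that $f_0$ solves it. Indeed $\cR f_0(\alpha,s)=\pi(1+s^2)^{-1/2}$, and $\cR^*$ of $(1+s^2)^{-1}$ evaluates to a multiple of $(1+|x|^2)^{-1/2}=f_0^{1/2}$. The stated direction then follows once we know that $f_0$, and not merely some radial critical point, attains the supremum.

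\textbf{The ``only if'' direction.} Here we follow Christ. Let $f\ge0$ be an extremiser. Equality must then hold in the Steiner symmetrisation inequality in every direction. The equality cases of Riesz--Sobolev-type inequalities (Burchard's characterisation) force every one-dimensional section of $f$ to be a translate of its symmetric decreasing rearrangement, with the centres of the sections depending affinely on the transverse variable. One deduces that $f$ is ellipsoidally symmetric about some point, i.e.\ $f=F(|\phi(x)|)$ with $F$ nonincreasing. Composing with $\phi^{-1}$ reduces us to radial nonincreasing $f$. It then remains to show that the radial solution of the Euler--Lagrange equation attaining the norm is unique up to dilation and scaling.

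For that uniqueness we exploit the extra conformal/projective symmetry of $\cR$ in the plane. Via stereographic (or gnomonic) lifting to $\bbS^2$, $\cR$ becomes the great-circle transform, and $f_0$ corresponds to a constant. Symmetrising on the sphere around an arbitrary axis, the same equality argument applied in lifted coordinates forces the lifted function to be invariant under a larger group. This pins the profile to $c(1+|x|^2)^{-1}$ up to dilation.

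\textbf{Main obstacle.} The hardest step is the rigidity analysis of equality in the rearrangement inequality and its upgrade from ``symmetric in each section'' to the specific profile. This requires the delicate equality theory, handled by Christ in \cite{Christ}, which we would essentially invoke rather than reprove.
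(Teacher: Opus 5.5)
The paper gives no proof of this statement; it quotes it from Christ. Your outline therefore has to be judged on its own, as a sketch of Christ's rearrangement approach, and two steps do not work as written.

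\textbf{The ``if'' direction does not close.} Checking that $f_0(x)=(1+|x|^2)^{-1}$ solves $\cR^*[(\cR f)^2]=\lambda f^{1/2}$ tells you nothing about extremality. Whether non-negative (even radial) critical points of $\Phi$ must be extremisers is exactly the open question this paper is built around. The ``if'' direction has to be deduced from ``only if'':
\begin{enumerate}
\item an extremiser exists, and it may be taken non-negative since $|\cR f|\le\cR|f|$;
\item by ``only if'' it equals $c(1+|\phi(x)|^2)^{-1}$;
\item affine invariance and homogeneity of $\Phi$ then make every function of this form an extremiser.
\end{enumerate}
The Euler--Lagrange computation can simply be dropped. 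A smaller point: BLL does not apply to $\langle f,\cR^*[(\cR f)^2]\rangle$ as it stands. You need a product representation, such as Drury's identity or $\|\cR f\|_3^3=2\iint(\int f(x_1,u+mx_1)\,dx_1)^3\,du\,dm$. In the latter, once the cube is expanded and the first coordinates of the three points are fixed, the integrand is a BLL form in $(u,m)$.

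\textbf{The final rigidity step is not an argument as written, and it is where the content of the theorem sits.} Stereographic projection does not send lines to great circles; only the gnomonic map $\omega(x)=(x,1)/\sqrt{1+|x|^2}$ does. With the weighting $f(x)=(1+|x|^2)^{-1}F(\omega(x))$ one gets $\|f\|_{3/2}=\|F\|_{L^{3/2}(\bbS^2_+)}$, and $\|\cR f\|_3$ equals, up to a constant, the $L^3(\bbS^2)$ norm of the great-circle transform of $F$. This projective invariance of $\Phi$ is strictly more than the affine invariance you recorded, and it must be verified.

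You also do not say which rearrangement inequality on the sphere would let you ``symmetrise around an arbitrary axis'', and none is needed. What works is to feed the rotated functions $f^\rho(x):=(1+|x|^2)^{-1}F(\rho\,\omega(x))$, which are extremisers by projective invariance, back into your planar conclusion:
\begin{enumerate}
\item For radial $f$ one has $F=h(|\omega_3|)$.
\item Take $\rho$ a quarter turn about the $x_1$-axis. Then $f^\rho(x)=(1+|x|^2)^{-1}h(|x_2|/\sqrt{1+|x|^2})$, which is even in each coordinate and so must equal $\tilde G(ax_1^2+bx_2^2)$.
\item Comparing the two expressions along the curves $|x_2|=t\sqrt{1+|x|^2}$ forces $\tilde G(z)=C/(a+z)$ and $h(t)=C/(a+(b-a)t^2)$.
\item Hence $f(x)=C/(b+a|x|^2)$.
\end{enumerate}
A dilation parameter survives here, so the lifted function is not invariant under a larger group unless $a=b$, contrary to what your sketch suggests.

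Finally, all of this rests on every extremiser being ellipsoidally symmetric. That is the equality analysis in the Steiner step (Burchard-type equality cases plus the characterisation of ellipses by collinear chord midpoints), which you defer to Christ rather than prove.
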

Drouot \cite{Drouot} proved a quantitative stability property of the extremiser manifold $\cE$. In a heuristic sense, Drouot's result asserts that if a function $f$ is far from $\cE$, then the ratio ${\|\cR f\|_3}/{\|f\|_{3/2}}$ is far from $\|\cR\|$. To make this result precise, we define the distance of $f$ from $\cE$ as 
\begin{equation}
    d(f,\cE):=\inf_{h\in\cE}\|f-h\|_{3/2}.
\end{equation}
The following holds.
\begin{theorem}[Drouot \cite{Drouot}]\label{thm:Drouot}
    There exists a constant $c_\cR>0$ such that
    \begin{equation}\label{eq:Drouot}
        \frac{\|\cR f\|_3}{\|f\|_{3/2}}\leq \|\cR\|\left[1-c_\cR \left(\frac{d(f,\cE)}{\|f\|_{3/2}}\right)^2\right],
    \end{equation}
    for all $0\neq f\in L^{3/2}(\R^2)$.
\end{theorem}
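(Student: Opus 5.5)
Drouot's theorem is a quantitative stability estimate. I would prove it by the Bianchi--Egnell scheme: a local analysis near $\cE$ based on a spectral gap for the second variation, combined with a compactness argument away from $\cE$.

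\textbf{Reductions.} Since $|\cR f|\le \cR|f|$, we have $\|\cR f\|_3\le\|\cR |f|\|_3$. One has to relate $d(f,\cE)$ to $d(|f|,\cE)$, using that $\cE$ also contains negative multiples of the extremisers. The cleanest route is to note that if $f$ is close to $\cE$ then $f$ is close to $\pm|f|$. Otherwise the inequality $|\cR f|\le\cR|f|$ is strict by a definite amount, which can be quantified through the positivity of the kernel. So I would reduce to $f\ge0$. By homogeneity we may take $\|f\|_{3/2}=1$. Since $\|\cR f\|_3\le\|\cR\|$ and $d(f,\cE)\le\|f\|_{3/2}=1$, the estimate is trivial unless $\|\cR f\|_3$ is close to $\|\cR\|$ whenever $d(f,\cE)$ is not small. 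The argument therefore splits into two regimes.

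\textbf{Far regime.} Suppose $d(f,\cE)\ge\delta$. I would argue by contradiction. Take a sequence $f_n$ with $\|f_n\|_{3/2}=1$ and $\|\cR f_n\|_3\to\|\cR\|$. The precompactness-modulo-symmetries theorem of Christ (\cite{Christ-Extremals}) gives affine maps $\phi_n$ such that the renormalised functions $|\det\phi_n|^{2/3}f_n\circ\phi_n$ converge in $L^{3/2}$ to an extremiser. Hence $d(f_n,\cE)\to0$, because $\cE$ is invariant under these symmetries. This gives a constant $\eta(\delta)>0$ with $\|\cR f\|_3\le\|\cR\|(1-\eta)$, which yields the claim in this regime.

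\textbf{Near regime and main obstacle.} Suppose $d(f,\cE)<\delta$. Choose a nearest point $h\in\cE$; the infimum is attained by local compactness of $\cE$ modulo symmetries. After applying the affine and scaling symmetries, $h=c(1+|x|^2)^{-1}$. Write $f=h+g$, where $g$ is orthogonal to the tangent space $T_h\cE$ in the pairing $\int h^{-1/2}g\,\partial h$ that arises from first-order optimality in $L^{3/2}$. Expand $\|\cR f\|_3^3$ and $\|f\|_{3/2}^{3/2}$ to second order in $g$. The first-order terms cancel by the Euler--Lagrange equation $\cR^*((\cR h)^2)=\lambda h^{1/2}$. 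The second-order term is a quadratic form
\[
Q(g)=2\langle \cR^*(\cR h\,\cR g),g\rangle-\tfrac{\lambda}{2}\int h^{-1/2}g^2 .
\]
The key step, and the main obstacle, is the spectral gap $Q(g)\le -c\int h^{-1/2}g^2$ on the complement of $T_h\cE$. I would transfer the problem to $\bbS^2$ by the projective (stereographic-type) map under which $h$ becomes constant and affine maps become projective ones. The operator $h^{-1/4}\cR^*(\cR h\,\cR\,\cdot\,)h^{-1/4}$ then becomes a rotation-invariant operator, which is diagonalised by spherical harmonics. Its eigenvalues can be computed via Funk--Hecke. The degrees $\le 2$ account exactly for the tangent directions of the 6-dimensional affine group together with scaling, and all higher degrees give a strict gap.

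Finally, the remainder has to be controlled. The cubic remainder is $o(\|g\|_{3/2}^2)$, but the weighted $L^2$ norm and the $L^{3/2}$ norm of $g$ are not comparable where $|g|\gg h$. For this part I would use the standard splitting of $g$ into the regions $|g|\le h$ and $|g|>h$, as in Bianchi--Egnell or Figalli-type arguments for non-Hilbertian exponents. On the region $|g|>h$ the gain comes directly from the strict convexity of $t\mapsto t^{3/2}$. This yields $\|\cR f\|_3\le\|\cR\|(1-c\,d(f,\cE)^2)$.
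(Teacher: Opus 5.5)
The paper gives no proof of this statement; it is quoted from Drouot. Your outline therefore has to stand on its own.

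\textbf{What works.} The Bianchi--Egnell architecture is the right one, and the spectral step you flag as the main obstacle does go through.

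\emph{Lift.} Use the gnomonic lift $x\mapsto(x,1)/\sqrt{1+|x|^2}$; it is this map, not stereographic projection, that sends lines to great circles. Write $f=hF$ and $g=hG$. Then $\cR$ becomes the great-circle transform $\mathcal{T}$, and $\int h^{-1/2}g^2\,dx=\int G^2\,d\omega$. Consequently the correct symmetrisation is $h^{1/4}\cR^*(\cR h\,\cR(h^{1/4}\,\cdot))$, not $h^{-1/4}(\cdots)h^{-1/4}$.

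\emph{Eigenvalues.} Funk--Hecke gives $\mathcal{T}Y_k=2\pi P_k(0)Y_k$. So on degree $k$ your $Q$ is a positive multiple of $(4P_k(0)^2-1)\|Y_k\|_2^2$. Only even $k$ occur.

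\emph{Mode count.} Degree $0$ (where $Q>0$) and degree $2$ (where $Q=0$) give $1+5=6=\dim T_h\cE$. The affine orbit of $h$ is $5$-dimensional, because rotations fix $h$. This is not ``degrees $\le2$'' of a $7$-dimensional family. For even $k\ge4$ one has $4P_k(0)^2-1\le-7/16$.

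Two other steps, however, fail as written.

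\textbf{Orthogonality.} If $h$ is an $L^{3/2}$-nearest point, first-order optimality gives $\int|g|^{1/2}\mathrm{sgn}(g)\,\partial h=0$. It does not give $\int h^{-1/2}g\,\partial h=0$: these are different conditions, and the first is not even linear in $g$. Your gap needs the weighted one. We have $Q(h)=\tfrac32\lambda\int h^{3/2}>0$ and $Q=0$ on degree-$2$ modes. Nothing in the nonlinear condition controls the weighted degree-$0$ and degree-$2$ components of $g$, so there is no coercivity.

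The fix is not to use the nearest point. Produce $h$ instead by the implicit function theorem.
\begin{itemize}
\item Solve $\int h^{-1/2}(f-h)\,\partial_ih=0$, $i=1,\dots,6$, for $h\in\cE$ near a near-minimiser $h_0$. This is well defined on $L^{3/2}$, since $h^{-1/2}\partial_ih\in L^3$.
\item The Jacobian at $f=h_0$ is minus the Gram matrix of the $\partial_ih_0$.
\item The condition is equivariant under $u\mapsto|\det\phi|^{2/3}u\circ\phi$, so the neighbourhood is uniform along $\cE$.
\end{itemize}
This yields $\|f-h\|_{3/2}\lesssim d(f,\cE)$, which suffices because $d(f,\cE)\le\|f-h\|_{3/2}$ trivially.

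With this choice the splitting works. Set $g_1=g\chi_{|g|\le\delta h}$ and $g_2=g-g_1$. The low-mode components inherited by $g_1$, together with the cross terms, cost at most $\eta\int h^{-1/2}g_1^2+C_\eta\|g_2\|_{3/2}^2$. Since $\|g_2\|_{3/2}^2\le\|g\|_{3/2}^{1/2}\|g_2\|_{3/2}^{3/2}$, this is absorbed by the convexity gain $c\|g_2\|_{3/2}^{3/2}$.

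\textbf{Reduction to $f\ge0$.} You claim that $|\cR f|\le\cR|f|$ is strict by a definite amount when $f$ is far from $\pm|f|$. This is false. Take $f=v-\mu^{4/3}v(\mu\,\cdot)$ with $0\le v\in C_c^\infty(\{1<|x|<2\})$ and $\mu\to\infty$. Then $\|f_+\|_{3/2}=\|f_-\|_{3/2}$, yet $\int(\cR f_\pm)^2\cR f_\mp=O(\mu^{-1/3})$, so $\|\cR|f|\|_3-\|\cR f\|_3\to0$. The deficit in such examples comes from decoupling, not from positivity of the kernel.

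The correct reduction is elementary. Pointwise, $|\cR f_+-\cR f_-|^3\le(\cR f_+)^3+(\cR f_-)^3$. Take $\|f\|_{3/2}=1$ and $\alpha=\|f_+\|_{3/2}^{3/2}\ge\beta=\|f_-\|_{3/2}^{3/2}$ (otherwise replace $f$ by $-f$). Then
\[
\frac{\Phi(f)^3}{\|\cR\|^3}\le\alpha^2\frac{\Phi(f_+)^3}{\|\cR\|^3}+\beta^2=1-2\alpha\beta-\alpha^2\Bigl(1-\frac{\Phi(f_+)^3}{\|\cR\|^3}\Bigr).
\]
Now apply the non-negative case to $f_+$, and use $\alpha\ge1/2$, $2\alpha\beta\ge\beta\ge\beta^{4/3}$ and $d(f,\cE)\le d(f_+,\cE)+\beta^{2/3}$. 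This gives the signed case.
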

Extremisers of the nequality \eqref{eq:Oberlin-Stein} have similar form to their counterparts for the Hardy--Littlewood--Sobolev inequality in $\R^n$,
\begin{equation}\label{HLS}
     \int_{\R^n}\int_{\R^n}\frac{f(x)g(y)}{|x-y|^\lambda}\,dx\,dy \leq C_S\,\|f\|_{L^{p}(\R^n)}\|g\|_{L^{p}(\R^n)},
\end{equation}
for all $0<\lambda<n$, $p=2n/(2n-\lambda)$ and non-negative functions in $L^p(\R^n)$, where $C_S(n,\lambda)$ denotes the optimal constant. Carlen, Carrillo and Loss \cite{Carlen-Carrillo-Loss} studied the extremal problem for \eqref{HLS} via a modified heat-flow approach. They showed that the Hardy--Littlewood--Sobolev functional in $\R^n$ for $n\geq 3,\,\lambda=n-2$,
\begin{equation}\label{HLS functional}
    \mathrm{HLS}(u) := C_S\|u\|_{2n/(n+2)}^2 - \frac{1}{(n-2)|\bbS^{n-1}|}\int_{\R^n}\int_{\R^n}\frac{u(x)u(y)}{|x-y|^{n-2}}dx\,dy,
\end{equation}
is non-increasing along the fast diffusion flow
\begin{equation}\label{fast diffusion}
    u_t = \Delta(u^m),
\end{equation}
when $m=n/(n+2)$, for initial data in $L^{2n/(n+2)}(\R^n)$ with sufficiently fast polynomial decay. In fact, they showed that if $u(t)$ is the solution of \eqref{fast diffusion} at time $t$, then for a suitable scaled version $v(t)$ of $u(t)$, one has
\[\lim_{t\to \infty}\mathrm{HLS}(v(t))=0.\]
They then show that the renormalised solution $v(t)$ converges to a Barenblatt profile
\[h_{c,s,x_0}(x):=\frac{c}{\left(1+|\frac{x-x_0}{s}|^2\right)^{(n+2)/2}},\]
where $c,s>0$ are constants and $x_0\in \R^n$. Their result immediately implies that the only non-negative extremisers of \eqref{HLS} are the profiles $h_{c,s,x_0}.$ \\\\
While Theorem \ref{thm:Christ} completely characterises the extremisers of \eqref{eq:Oberlin-Stein}, Christ remarks that the methods developed in \cite{Christ} do not suffice to identify all critical points of the functional 
\begin{equation}\label{eq:Phi}
    \Phi(f):=\frac{\|\cR f\|_3}{\|f\|_{3/2}}.
\end{equation} 
Before proceeding, we first need to make the notion of critical point precise in this setting:
\begin{definition}
    A function $0\neq f\in L^{3/2}(\R^2)$ is called a critical point of $\Phi$ iff 
    \begin{equation}
        \frac{d}{d\varepsilon}\Bigg|_{\varepsilon=0}\Phi(f+\varepsilon \eta)=0,
    \end{equation}
    for all smooth $\eta\in L^{3/2}(\R^2)$ such that $f+\varepsilon\eta\neq 0$. The quantity $\frac{d}{d\varepsilon}\Big|_{\varepsilon=0}\Phi(f+\varepsilon\eta)$ is equal to the directional Fr\'echet derivative $\nabla_\eta\Phi(f)$ of $\Phi$.
\end{definition}
Christ \cite{Christ} asked whether the flow approach in \cite{Carlen-Carrillo-Loss} can be adapted to characterise the extremisers and critical points of $\Phi$. In this work, we propose a new non-linear gradient flow $f(t):=T(t)f_0$, such that the Radon functional
\[
    \Phi(f(t)) := \frac{\|\cR\, T(t)f_0\|_{L^3(\bbS^1\times\R)}}{\|T(t)f_0\|_{L^{3/2}(\R^2)}}
\]
is non-decreasing and whose equilibrium states are critical points of $\Phi$, given non-negative radial initial data $f_0\in L^{3/2}(\R^2)$. To do so, we prove the following non-explicit characterisation of critical points of $\Phi$ in Section \ref{section:critical points}.
\begin{proposition}\label{prop:extremisers}
    For any $f\in L^{3/2}(\R^2)$ and $\lambda\in \R$, let
    \[Af:= \cR^*\left(|\cR f|\,\cR f\right), \qquad  \cG(f,\lambda):= (Af)^2-\lambda |f|. \]
    A function $f\in L^{3/2}(\R^2)$ is a critical point of $\Phi $ if and only if there exists  $\lambda\in\R$ such that $\cG(f,\lambda)=0$.
\end{proposition}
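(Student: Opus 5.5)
Compute the first variation of $\Phi$ directly and read off the Euler--Lagrange equation. Write $N(f)=\|\cR f\|_3^3$ and $D(f)=\|f\|_{3/2}^{3/2}$, so that $\Phi=N^{1/3}D^{-2/3}$. For smooth compactly supported $\eta$, differentiate under the integral:
\[
\frac{d}{d\varepsilon}\Big|_{\varepsilon=0}N(f+\varepsilon\eta)=3\int |\cR f|\,\cR f\,\cR\eta=3\int_{\R^2}Af\,\eta\,dx,
\]
using duality between $\cR$ and $\cR^*$. This is justified by the pointwise bound $\big||a+\varepsilon b|^3-|a|^3\big|\le C|\varepsilon|(|a|+|b|)^2|b|$, the integrability of $(|\cR f|+|\cR\eta|)^2|\cR\eta|$ (by \eqref{eq:Oberlin-Stein} and H\"older), and dominated convergence. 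Similarly,
\[
\frac{d}{d\varepsilon}\Big|_{\varepsilon=0}D(f+\varepsilon\eta)=\frac32\int |f|^{1/2}\operatorname{sgn}(f)\,\eta,
\]
which is legitimate because $t\mapsto|t|^{3/2}$ is $C^1$. Note that $Af\in L^3(\R^2)$: $|\cR f|^2\in L^{3/2}$, and $\cR^*$ maps $L^{3/2}\to L^3$ as the adjoint of \eqref{eq:Oberlin-Stein}. Hence both derivatives are continuous linear functionals in $\eta$ with respect to the $L^{3/2}$ norm.

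By the quotient rule, $\nabla_\eta\Phi(f)=0$ for all such $\eta$ if and only if
\[
\frac{1}{N}\int Af\,\eta=\frac{1}{D}\int |f|^{1/2}\operatorname{sgn}(f)\,\eta\quad\text{for all test } \eta.
\]
The left- and right-hand integrands are in $L^3_{loc}$, so the fundamental lemma of the calculus of variations gives the pointwise a.e. identity
\[
Af=\mu\,|f|^{1/2}\operatorname{sgn}(f),\qquad \mu=\frac{N(f)}{D(f)}>0.
\]
Here $N\neq0$: otherwise $\cR f=0$, and injectivity of $\cR$ on $L^{3/2}$ would give $f=0$. Conversely, if the pointwise identity holds with some constant $\mu$, then testing against $\eta=f$ (by density, since both sides pair continuously with $L^{3/2}$) forces $\mu=N/D$, so $\nabla_\eta\Phi(f)=0$ for all $\eta$.

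It remains to pass between this identity and $\cG(f,\lambda)=0$. Squaring gives $(Af)^2=\mu^2|f|$, i.e. $\cG(f,\mu^2)=0$. For the converse, suppose $(Af)^2=\lambda|f|$. Then $\lambda\ge0$, and $\lambda>0$ unless $Af=0$. But $Af=0$ would give $\int Af\,f=\|\cR f\|_3^3=0$, hence $f=0$. So $|Af|=\sqrt\lambda|f|^{1/2}$, and we must recover the sign relation $\operatorname{sgn}(Af)=\operatorname{sgn}(f)$.

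This sign issue is the main obstacle. Squaring loses the sign, and for signed $f$ the relation $(Af)^2=\lambda|f|$ does not by itself force it. There are two ways to handle this.
\begin{enumerate}[label=(\roman*)]
\item Interpret the proposition in the setting of the paper, i.e.\ non-negative $f$. Then $Af=\cR^*((\cR f)^2)\ge0$ automatically, and taking the non-negative square root gives $Af=\sqrt\lambda f^{1/2}$ directly, with $\sqrt\lambda=\mu$ by testing with $f$.
\item For general signed $f$, aim to show that $Af$ and $f$ agree in sign wherever $f\neq0$. I expect this genuinely fails for arbitrary signed data.
\end{enumerate}
I would therefore state and prove the equivalence with the sign relation made explicit, and note that it is automatic for the non-negative functions the flow preserves. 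Finally, the identification $\lambda=\mu^2=\big(\|\cR f\|_3^3/\|f\|_{3/2}^{3/2}\big)^2$ follows from integrating $Af\cdot f$.
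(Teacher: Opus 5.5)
Your proposal is correct and follows essentially the paper's route: the paper proves Proposition \ref{prop:extremisers2} by the same first-variation computation to obtain $Af=\Phi(f)^3\|f\|_{3/2}^{3/2}|f|^{1/2}\mathrm{sgn}(f)$, and then squares; it differentiates $\Phi^3$ instead of $\Phi$, and omits your dominated-convergence and $\cR f\neq 0$ justifications. The sign obstruction you flag in the converse is genuine and the paper does not resolve it either, since it passes from the squared equation back to the unsquared one only for non-negative $f$, by taking the non-negative square root exactly as in your option (i); so your formulation is precisely what the argument supports, namely $f\neq 0$ (because $\cG(0,\lambda)=0$ holds trivially), together with $f\geq 0$ or an explicit sign relation in the converse.
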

The equation $\cG(f,\lambda)=0$ is known as the Euler--Lagrange equation for $\Phi$, as its solutions are precisely the solutions of the constrained optimisation problem
\[\nabla\Phi(f)=0,\ \mbox{ subject to }\ \|f\|_{3/2}=\mbox{ constant}.\]
This characterisation of critical points of $\Phi$ naturally gives rise to the following gradient flow (ascent) for $\Phi$:
\begin{gather}\label{eq:EL}
    \partial_t f(x,t)= Af(t)(x)^2-\lambda(f(t))f(x,t),
\end{gather}
where $\lambda(f(t))$ is a Lagrange multiplier chosen such that $\|f(t)\|_{3/2}$ is constant for all times $t$ for which the flow exists. In fact, we will see that there is a unique such choice, namely
\begin{equation}\label{eq:lambda}
    \lambda(f):=\frac{\int |f|^{1/2} (Af)^2\mathrm{sgn}(f)}{\int |f|^{3/2}}.
\end{equation}
The main result of this work is the following:
\begin{theorem}\label{thm:EL main}
    For any $f_0\in L^{3/2}(\R^2)$, the initial-value problem \eqref{eq:EL} has a unique solution $f\in C^1\left([0,\infty), L^{3/2}(\R^2)\right)$ given by
    \begin{equation}\label{eq:Duhamel}
        f(x,t)= e^{-\Lambda(t)} f_0(x)+\int_0^t e^{-[\Lambda(t)-\Lambda(s)]} Af(s)(x)^2 ds,
    \end{equation}
    where $\Lambda(t):=\int_0^t \lambda(f(s))ds$. Furthermore, if $f_0\geq 0$ the following are true:
    \vspace{0.1cm}
    \begin{enumerate}
        \item The functional $\Phi(f(\cdot,t))$ is non-decreasing in $t$.
            \vspace{0.1cm}
        \item Let $\varepsilon\in(0,10^{-2})$. If in addition $f_0$ is radial and
        \begin{align}\label{eq:threshold}
            \Phi(f_0)^3\geq\|\cR\|^3(1-\varepsilon),
        \end{align}
        then there exists a scaling function $\mu:\R_+\to \R_+$ such that $\mu(t)^{4/3}f_{\mu(t)}(\cdot, t)$ has a convergent subsequence in $L^{3/2}(a\leq|x|\leq b)$, for all $0<a<b$. The (common) limit function $f_\infty$ is non-trivial and a non-negative radial critical point of $\Phi$.
    \end{enumerate}
    
\end{theorem}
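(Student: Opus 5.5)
We split the proof into three parts: well-posedness, monotonicity, and convergence.

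\emph{Well-posedness.} By Oberlin--Stein \eqref{eq:Oberlin-Stein} and duality, $\cR^*:L^{3/2}(\bbS^1\times\R)\to L^3(\R^2)$ is bounded. Hence $A$ maps $L^{3/2}$ to $L^3$ and is locally Lipschitz: $\|Af-Ag\|_3\lesssim(\|f\|_{3/2}+\|g\|_{3/2})\|f-g\|_{3/2}$. Therefore $f\mapsto (Af)^2$ is locally Lipschitz $L^{3/2}\to L^{3/2}$. The multiplier $\lambda(f)$ in \eqref{eq:lambda} is locally Lipschitz on $\{f\neq0\}$, since its numerator is bounded by $\|f\|_{3/2}^{1/2}\|Af\|_3^2$ via H\"older with exponents $(3,3/2)$. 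Picard iteration on the Duhamel form \eqref{eq:Duhamel} then gives a unique local $C^1$ solution. A direct computation shows $\frac{d}{dt}\int|f|^{3/2}=\frac32\int |f|^{1/2}\mathrm{sgn}(f)\,[(Af)^2-\lambda f]=0$, so the norm is conserved. Consequently $|\lambda(f(t))|\lesssim\|f_0\|_{3/2}^2$ stays bounded, no blow-up can occur, and the solution is global. For $f_0\ge0$, \eqref{eq:Duhamel} shows $f(t)\ge0$. Radiality is preserved because $A$ commutes with rotations.

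\emph{Monotonicity.} Since $\|f(t)\|_{3/2}$ is constant, it suffices to differentiate $\|\cR f\|_3^3$. We have
$$\frac{d}{dt}\|\cR f\|_3^3=3\int |\cR f|\cR f\,\cR\partial_t f=3\int Af\,\partial_t f=3\Big(\int (Af)^3-\lambda\int fAf\Big).$$
Note $\int fAf=\|\cR f\|_3^3$. With $f\ge0$, write $\int (Af)^2 f^{1/2}=\int (Af f^{1/3})\cdot (Af)f^{1/6}$. Applying H\"older with exponents $3/2$ and $3$ gives
$$\int (Af)^2 f^{1/2}\le \Big(\int (Af)^{3/2}f^{1/2}\Big)^{2/3}\Big(\int (Af)^3\Big)^{1/3}.$$
A second H\"older gives $\int(Af)^{3/2}f^{1/2}\le (\int fAf)^{1/2}(\int (Af)^2)^{1/2}$, but this pairing is not the right one. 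Instead I would use the cleaner chain
$$\int fAf\le \|f\|_{3/2}\|Af\|_3, \qquad \int f^{1/2}(Af)^2\le \|f\|_{3/2}^{1/2}\|Af\|_3^2,$$
which gives $\lambda\int fAf\le \|Af\|_3^3$. This is exactly $\frac{d}{dt}\|\cR f\|_3^3\ge0$.

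\emph{Convergence.} The monotonicity identity, integrated in time, gives
$$\int_0^\infty \big(\|Af\|_3^3-\lambda\int fAf\big)\,dt\le \|\cR\|^3\|f_0\|_{3/2}^3<\infty.$$
So along a sequence $t_n\to\infty$ the H\"older defect tends to zero. Quantitatively, the near-equality case in H\"older forces $(Af)^2$ to be close to a multiple of $f$; I would make this precise through a stability form of H\"older (uniform convexity of $L^{3/2}$). This yields $\|\cG(f(t_n),\lambda(t_n))\|_{3/2}\to0$ in a suitable sense. Next, by \eqref{eq:threshold} and monotonicity, $\Phi(f(t))^3\ge\|\cR\|^3(1-\varepsilon)$ for all $t$. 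Drouot's Theorem~\ref{thm:Drouot} then keeps $f(t)$ within $O(\varepsilon^{1/2})$ of $\cE$. For radial $f$ the nearby extremiser is $c(1+|x/\mu|^2)^{-1}$, and this defines the scaling $\mu(t)$. After rescaling by $\mu(t)^{4/3}f_{\mu(t)}$, which is $L^{3/2}$-invariant, the sequence is uniformly close to a fixed profile, which rules out both concentration and vanishing.

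\emph{Main obstacle.} The hardest step is compactness on annuli. Weak limits alone do not pass through the quadratic map $f\mapsto (Af)^2$. Here I would prove the localised smoothing estimate for $A$ on radial functions, roughly that $A$ maps bounded radial sets in $L^{3/2}$ into sets compact in $L^{3}(a\le|x|\le b)$. For radial $f$, $\cR f$ is a one-dimensional Abel-type transform, and I would exploit its half-derivative smoothing away from the origin. Given this, $(Af_n)^2$ converges strongly on annuli. The approximate Euler--Lagrange equation $f_n\approx\lambda_n^{-1}(Af_n)^2$, with $\lambda_n$ bounded above and bounded below by closeness to $\cE$, then upgrades weak convergence to strong convergence in $L^{3/2}(a\le|x|\le b)$. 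A diagonal argument over annuli gives a common limit $f_\infty$. It is non-trivial by closeness to the extremiser profile, and it satisfies $\cG(f_\infty,\lambda_\infty)=0$. By Proposition~\ref{prop:extremisers}, $f_\infty$ is a non-negative radial critical point of $\Phi$.
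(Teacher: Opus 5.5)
\textbf{Well-posedness.} This step has a genuine gap. You claim $\lambda$ is locally Lipschitz on $\{f\neq0\}$ because its numerator is bounded by $\|f\|_{3/2}^{1/2}\|Af\|_3^2$. Boundedness is not Lipschitz continuity, and the claim is false: the factor $|f|^{1/2}$ is only H\"older-$\frac12$. Take $f=\chi_B$ for a ball $B$ and a set $E$ of finite positive measure disjoint from $B$; note $Af>0$ everywhere. Since $A(f+\delta\chi_E)\ge Af$, one gets $\lambda(f+\delta\chi_E)-\lambda(f)\gtrsim\delta^{1/2}\int_E(Af)^2$ for small $\delta$, while $\|\delta\chi_E\|_{3/2}\sim\delta$. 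So a contraction argument on \eqref{eq:Duhamel}, whose exponent $\Lambda$ involves $\lambda$, does not close, even inside the non-negative cone.

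The paper flags exactly this obstacle and removes it with the substitution $\psi=e^{\Lambda}f$. This gives $\partial_t\psi=e^{-3\Lambda}(A\psi)^2$, and norm conservation rewrites $e^{-3\Lambda}$ as $(\int f_0^{3/2}/\int\psi^{3/2})^2$. So $\psi$ solves an autonomous ODE with no square root and a globally Lipschitz vector field on $L^{3/2}$ (Proposition \ref{prop:Psi Lip}). One then recovers $f=\psi\,\|f_0\|_{3/2}/\|\psi\|_{3/2}$.

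\textbf{Convergence.} There are two further gaps.

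First, Drouot's inequality only gives $d(f(t),\cE)\lesssim(\varepsilon/c_\cR)^{1/2}$ with a non-explicit $c_\cR$. Since always $d(f,\cE)\le\|f\|_{3/2}$, this is vacuous unless $\varepsilon$ is below an unknown threshold, so it cannot reach the stated range $\varepsilon<10^{-2}$. You would also need to prove that a radial function near $\cE$ is near a \emph{radial} extremiser before reading off $\mu(t)$. The paper avoids Drouot. Lemmas \ref{lemma:two scale} and \ref{lemma:mass trapping} use only $\Phi^3\ge\|\cR\|^3(1-\varepsilon)$, the radial formula for $\cR$ and the $L^{4/3}\to L^2$ bound. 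They show that mass in $|x|\le r$ and mass in $|x|\ge r/\sigma$ interact negligibly, so a near-extremiser cannot split its mass between well-separated scales. This yields $\mu_t$ with $\int_{\mu_t\le|x|\le\mu_t/\sigma}f(t)^{3/2}\ge M_*$ uniformly in $t$.

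Second, you assert $\cG(f_\infty,\lambda_\infty)=0$ without showing that the limit of $Af_n$ on annuli is $Af_\infty$. Because $A$ is nonlocal, local convergence $f_n\to f_\infty$ does not give this: mass of $f_n-f_\infty$ near $0$ or $\infty$ can feed $A$ on $[a,b]$. The paper splits $F_n-F_\infty$ at radii $R^{-1}$ and $R$. The inner and outer pieces contribute $O(R^{-1/3})$ and $O(R^{-2/3})$ on the annulus, uniformly in $n$. Cross terms are handled by the pointwise Cauchy--Schwarz inequality of Lemma \ref{lemma:Cauchy-Schwarz}.

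\textbf{What works.} Your H\"older chain $\lambda\int fAf\le\|Af\|_3^3$ is a valid and clean alternative to the paper's pointwise sign argument for monotonicity. That sign argument is still worth borrowing for dissipation. By the definition of $\lambda$, $\|Af\|_3^3-\lambda\int fAf=\int(Af-\lambda^{1/2}f^{1/2})((Af)^2-\lambda f)\ge\|Af-\lambda^{1/2}f^{1/2}\|_3^3$. This gives the approximate Euler--Lagrange equation along your sequence $t_n$ with no stability form of H\"older needed.

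For compactness, note that the paper puts the half-order smoothing on the outer Abel transform $Af(r)=4\int_0^r g(s)(r^2-s^2)^{-1/2}\,ds$ with $g=(\cR f)^2\in L^{3/2}$ (Proposition \ref{prop:A smoothing}). If you smooth $\cR f$ instead, you must treat $s$ near $0$ separately, since $\cR f_n$ can concentrate there.
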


\begin{remark}\label{remark:non-neg}
    From \eqref{eq:Duhamel}, the reader may readily verify that if $f_0\geq0$, then $f(x,t)\geq 0$ for all $(x,t)\in \R^2\times \R_+$. Moreover, if $f_0\neq0$, then $f(\cdot,t)\neq0$ for all $t>0$.
\end{remark}

\begin{remark}
    The multiplier $\lambda(f(t))$ in \eqref{eq:EL} depends on $t$ insofar as $f$ depends on $t$. Observe however that $\lambda(f)$ does not depend on the spatial variable $x$.
\end{remark}

While Theorem \ref{thm:EL main} establishes subsequential convergence of the rescaled solution $f(t)$ of \eqref{eq:EL} to critical points of the Radon functional $\Phi$ in the radial case under the hypothesis \eqref{eq:threshold}, it is an open question whether $f(t)$ converges to a radial extremiser of $\Phi$. In fact Christ \cite{Christ} posed the question of whether there actually exist non-negative critical points of $\Phi$ that are not extremisers. This question still remains open to best of the author's knowledge. It is therefore natural to pose the following conjecture about the Euler--Lagrange flow:
\begin{question}\label{conj:extremiser}
    Let $0\neq f_0\in L^{3/2}(\R^2)$ be non-negative and radial, and let $f(t)$ be the solution of \eqref{eq:EL} Then there exists a scaling function $\mu(t)$ such that $\mu(t)^{4/3}f(\mu(t)\cdot,t)$ converges to a radial extremiser of $\Phi$.
\end{question}
Both a positive and a negative resolution to Conjecture \ref{conj:extremiser} would resolve the question posed by Christ, at the very least for the case of radial extremisers. If the conjecture is true, then by the same logic of \cite{Carlen-Carrillo-Loss}, the monotonicity of $\Phi$ along the flow \eqref{eq:EL} implies that the extremisers of $\Phi$ are the \emph{only} critical points. If the conjecture is false, then there exist non-extremising critical points of $\Phi$. \\\\
There are partial results regarding the characterisation of critical points of $\Phi$. Christ \cite{Christ} showed the following:
\begin{theorem}[Christ \cite{Christ}]\label{thm:Christ asymptotic}
    Any critical point $f$ of the functional $\Phi$ has an asymptotic expansion of the form 
    \begin{equation}
        f(x)=\sum_{k=0}^\infty g_k\left(x/|x|\right)|x|^{-2-k},
    \end{equation}
    with each $g_k\in C^\infty(\bbS^1)$. In particular, 
    \begin{align}\label{eq:asymptotic}
        f(x)\leq C|x|^{-2},\ \mbox{ as }\ |x|\to \infty,
    \end{align}
      and $|\nabla^kf(x)|\leq C_k|x|^{-2-k}$ as $|x|\to \infty$ for some constants $C,C_k>0$, $k\in\mathbb{N}$.
\end{theorem}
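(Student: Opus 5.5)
The plan is to combine the Euler--Lagrange equation from Proposition \ref{prop:extremisers} with the projective invariance of the planar Radon transform. That invariance turns behaviour at infinity into local regularity near a finite line.

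\textbf{Step 1: the symmetry.} Consider the projective map $P(x_1,x_2)=(1/x_1,\,x_2/x_1)$ on $\{x_1\neq0\}$. It maps lines to lines and has Jacobian $|x_1|^{-3}$. So the operator $(\mathcal P f)(x)=|x_1|^{-2}f(P x)$ is an isometry of $L^{3/2}(\R^2)$. A direct change of variables in the line integrals should show that $\|\cR\mathcal P f\|_3=\|\cR f\|_3$, since the induced map on the space of lines carries the matching Jacobian for $L^3$. Hence $\Phi\circ\mathcal P=\Phi$. Since $\mathcal P$ is linear and invertible on $L^{3/2}$, it also maps critical points of $\Phi$ to critical points. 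Composing with rotations and translations, the same holds for the projective maps sending any chosen direction at infinity to a finite line.

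\textbf{Step 2: smoothness of critical points (main obstacle).} By Proposition \ref{prop:extremisers}, a critical point satisfies $\lambda|f|=(Af)^2$. Here $\lambda>0$ unless $f=0$, by pairing with $|f|^{1/2}\mathrm{sgn} f$ and using \eqref{eq:lambda}. I would show that such $f$ is locally $C^\infty$ by a bootstrap. The difficulty is that the equation is scaling-critical: $f\mapsto(Af)^2$ maps $L^{3/2}$ to $L^{3/2}$ with no room to spare, so naive iteration gains nothing.

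I would first try Christ's standard device for critical equations. Split $f=f_{\mathrm{small}}+f_{\mathrm{good}}$, where $\|f_{\mathrm{small}}\|_{3/2}<\delta$ and $f_{\mathrm{good}}$ is bounded with compact support. Treat the equation as a fixed-point problem for $f$ in a slightly better space, such as $L^{3/2}\cap L^{p}$ for some $p>3/2$, or a weighted/Sobolev space. The terms quadratic or cubic in $f_{\mathrm{small}}$ are contracted by the smallness of $\delta$. The terms involving $f_{\mathrm{good}}$ gain integrability from the $L^p\to L^q$ mapping properties of $\cR$ and the $1/2$-derivative smoothing of $\cR^*\cR$. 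This gives $f\in L^p$ for some $p>3/2$. Interpolation, Sobolev smoothing of $\cR$ on $L^2$, and differentiation of the equation then give $f\in L^\infty_{\mathrm{loc}}\cap H^s_{\mathrm{loc}}$ for every $s$, hence $C^\infty$.

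One subtlety is $|f|$ versus $f$ where $f$ vanishes. Since $(Af)^2\ge0$, the equation forces $|f|=\lambda^{-1}(Af)^2$, which is smooth once $Af$ is. This gives smoothness of $|f|$, which is what is needed for the decay bound in \eqref{eq:asymptotic}.

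\textbf{Step 3: expansion at infinity.} Apply Step 2 to $\mathcal P f$, which is again a critical point. Then $\mathcal P f$ is smooth across the line $\{x_1=0\}$, which is the image of the line at infinity. Inverting gives $f(y)=|y_1|^{-2}(\mathcal Pf)(1/y_1,y_2/y_1)$. Taylor expand $\mathcal P f(u,v)$ in $u=1/y_1$ around $u=0$, with $v=y_2/y_1$ in a compact set. This corresponds to a sector around the $y_1$-axis. In polar coordinates it produces $\sum_k g_k(y/|y|)|y|^{-2-k}$ with smooth $g_k$, uniformly on compact angular sectors. Covering $\bbS^1$ by finitely many rotated sectors and using uniqueness of asymptotic coefficients yields globally defined $g_k\in C^\infty(\bbS^1)$. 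The bounds $|\nabla^k f|\le C_k|x|^{-2-k}$ follow by differentiating this representation, using the chain rule for $P$.
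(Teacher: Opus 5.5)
Your Steps 1 and 3 are correct, but Step 2 is a genuine gap: it carries all the analytic content of the theorem and your sketch does not establish it. The paper gives no proof of this statement. It quotes it from Christ, and its only contribution is the remark that critical points are $C^\infty$, obtained by transferring the Christ--Xue theorem for $\|\cC f\|_3/\|f\|_{3/2}$. You may take the verifications below as checks on your two good steps. In slope--intercept coordinates $Xf(m,b)=\int f(t,mt+b)\,dt$ one has $\|\cR f\|_3^3=2\|Xf\|^3_{L^3(dm\,db)}$ and $X(\mathcal Pf)(m,b)=Xf(b,m)$, so $\Phi\circ\mathcal P=\Phi$. Criticality of $\mathcal Pf$ follows by density, since the derivative of $\Phi$ at $f$ is a bounded functional on $L^{3/2}$. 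The Taylor expansion in $u=1/y_1$ gives $g_k(\omega)=\omega_1^{-2-k}c_k(\omega_2/\omega_1)/k!$ on sectors, with the $\nabla^k f$ bounds following by homogeneity.

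Step 2 is in substance the Christ--Xue theorem. The sketch fails in three concrete places.

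(i) There is no bound $\cR:L^p(\R^2)\to L^q(\bbS^1\times\R)$ for any $p>3/2$; a Knapp example rules it out. So the fixed point cannot run on ``$L^p\to L^q$ mapping properties''. You must use Oberlin--Stein mixed norms $L^{p'}_\alpha L^r_s$ with $1/r=2/p-1$. You must also linearise the quartic map as $h\mapsto\lambda^{-1}Af\cdot B(f,h)$ and split its coefficients. That can be done and gives $f\in L^p$ for $p<2$, but it is not what you wrote.

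(ii) The passage to $C^\infty$ is the real difficulty, and ``Sobolev smoothing and differentiating the equation'' gives no mechanism for it. Since $|D_s|^t\cR=\cR|D|^t$, in the scale-invariant $L^{3/2}$--$L^3$ Sobolev scale the available estimates preserve the regularity index of $f\mapsto(Af)^2$ and never improve it. In $L^2$-based spaces $\cR$ and $\cR^*$ each gain $1/2$ in $s$. But closing the iteration then requires controlling $(\cR f)^2$ and $(Af)^2$ in those norms, for instance via uniform bounds on line integrals of $f$. Such bounds fail for $f\in L^{3/2}\cap L^\infty$, as $f=(1+|x_2|)^{-1}\chi_{\{|x_1|\le 1\}}$ shows. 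They are also information at infinity of exactly the kind Step 3 is meant to produce.

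(iii) For sign-changing $f$ the equation reads $f=\lambda^{-1}|Tf|\,Tf$ with $Tf=\cR^*(|\cR f|\,\cR f)$. Since $u\mapsto|u|u$ is only $C^{1,1}$, the bootstrap stalls at zero sets. So smoothness of $Tf$, and hence of $|f|$, is not justified. It would not suffice anyway, because Step 3 needs $\mathcal Pf$ itself to be smooth.

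To close the gap, follow the route the paper indicates. With $\Sigma(z,w)=(z,w-z^2/2)$ one checks $\cC f(x_1,x_2)=X(f\circ\Sigma)(x_1,x_2-x_1^2/2)$. Both changes of variables preserve measure, so $\|\cC f\|_3/\|f\|_{3/2}=2^{-1/3}\Phi(f\circ\Sigma)$, and critical points correspond under the smooth diffeomorphism $\Sigma$. Christ--Xue then supplies the smoothness, and your Steps 1 and 3 finish the argument. Restricting to $f\ge 0$ disposes of (iii): $\mathcal P$ preserves non-negativity, and non-negative critical points are the only case used in Theorem \ref{thm:critical points}.
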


\begin{remark}\label{remark:C infinity}
    It is also true that any critical point $f$ of $\Phi$ is $C^\infty$ (and therefore bounded). To justify this claim, we invoke the following two facts:
    \begin{enumerate}
        \item The functional $\Phi$ has the same critical points as the functional $$f\mapsto\|\cC f\|_3/\|f\|_{3/2},$$ where $\cC$ is the convolution operator with the parabola in $\R^2$,
        \[\cC f(x):=\int_{\R} f(x_1-y,x_2-y^2/2)dy.\]
        This is observed by Christ in the proof of Theorem $1.4$ in \cite{Christ}, due to the norm-equivalence $\|\cC f\|_3=\|\cR f\|_3$.
        \vspace{0.1cm}
        \item All critical points of $\|\cC f\|_3/\|f\|_{3/2}$ have been shown by Christ and Xue \cite{Christ-Xue} to be $C^\infty$.
    \end{enumerate}
    
\end{remark}
We compliment the asymptotic upper bound \eqref{eq:asymptotic} in Theorem \ref{thm:Christ asymptotic} with a matching lower bound, by utilising the analytical information contained in the Euler--Lagrange equation.
\begin{theorem}\label{thm:critical points}
    If $f\in L^{3/2}(\R^2)$ is a non-negative radial critical point of $\Phi$, then there exist constants $0< m\leq M<\infty$ depending on $f$ such that 
    \begin{equation}
        \frac{m}{1+|x|^2}\leq f(x)\leq \frac{M}{1+|x|^2},
    \end{equation}
    for all $x\in \R^2$.
\end{theorem}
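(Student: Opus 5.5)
The idea is to read the bounds directly off the Euler--Lagrange equation of Proposition \ref{prop:extremisers}. Since $f\ge0$ is a nontrivial critical point, there is $\lambda\in\R$ with $(Af)^2=\lambda f$, where $Af=\cR^*((\cR f)^2)$. Pairing with $|f|^{1/2}$ and using \eqref{eq:lambda} gives $\lambda=\int f^{1/2}(Af)^2/\int f^{3/2}$. This is strictly positive: $Af>0$ everywhere, because $\cR f\ge0$ is not identically zero and $\cR^*$ averages over all lines through $x$. Hence
\[ f=\lambda^{-1}(Af)^2 . \]
Both bounds therefore reduce to a two-sided estimate $Af(x)\sim(1+|x|)^{-1}$.

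For the upper bound, Remark \ref{remark:C infinity} shows $f$ is bounded, and Theorem \ref{thm:Christ asymptotic} gives $f\le C|x|^{-2}$ for large $|x|$. Together these yield $f\le M(1+|x|^2)^{-1}$ at once. One could also argue self-containedly: $f\in L^{3/2}$, so $\cR f\in L^3$ and $Af\in L^3$ by duality. Then one iterates the identity $f=\lambda^{-1}(Af)^2$, with radial decay estimates for $\cR^*$, to bootstrap the decay. I will simply cite Christ's result.

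For the lower bound, use radiality. Write $f(x)=F(|x|)$, so that $\cR f(\alpha,s)=u(|s|)$ with $u(s)=\int_\R F(\sqrt{s^2+y^2})\,dy$, which is independent of $\alpha$. Then
\[ Af(x)=\int_{\bbS^1}u(|x\cdot\alpha|)^2\,d\alpha . \]
Since $f>0$ on a set of positive measure, there is $r_0$ with $u(s)\ge c_0>0$ for $|s|\le r_0$. This holds because $u$ is a radially nonincreasing-type average: at least $u(s)>0$ for $s<R$, where $R$ is such that $F$ is not a.e. zero on $\{|x|<R\}$, and $u$ is lower semicontinuous by Fatou. Choose $r_0$ and $c_0$ this way, with $u\ge c_0$ on $[0,r_0]$; continuity of $u$ follows from the $C^\infty$ regularity of $f$. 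Then the set of $\alpha$ with $|x\cdot\alpha|\le r_0$ has measure $\gtrsim\min(1,r_0/|x|)$. This gives
\[ Af(x)\ge c_0^2\,c\,\min\bigl(1,r_0/|x|\bigr)\gtrsim(1+|x|)^{-1}, \]
and therefore $f=\lambda^{-1}(Af)^2\ge m(1+|x|^2)^{-1}$.

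The main obstacle is justifying pointwise positivity of $u$ near $s=0$, together with the use of the Euler--Lagrange identity pointwise rather than a.e. Both are handled by the smoothness from Remark \ref{remark:C infinity}. Because $f$ is continuous, nonnegative and nontrivial, $F>0$ on some interval $(r_1,r_2)$. Hence $u(s)>0$ for all $|s|<r_2$, and by continuity $u\ge c_0$ on $[0,r_2/2]$. The remaining computation of the angular measure, $|\{\alpha:|\cos\theta|\le r_0/|x|\}|\approx 4r_0/|x|$ for $|x|\ge r_0$, is routine.
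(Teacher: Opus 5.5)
Your argument is correct and is essentially the paper's: the Euler--Lagrange identity $f=\lambda^{-1}(Af)^2$, Christ's asymptotics plus smoothness for the upper bound, and a lower bound $Af\gtrsim(1+|x|)^{-1}$ from the lines through $x$ that pass within bounded distance of the origin. The paper packages this inside Proposition \ref{prop:A decay}, using $\int_{\pi/3}^{\pi/2}u(r\cos\theta)^2\,d\theta\sim r^{-1}\int_0^{r/2}u^2\ge r^{-1}\int_0^{1/2}u^2$ where you use the pointwise bound $u\ge c_0$ and count angles, and its own remark concedes that the upper-bound half of that proposition is not needed, as you observed (your aside that $u$ is ``nonincreasing-type'' is false in general, e.g.\ for an annulus, but you never use it).
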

The main novelty in the proofs of Theorems \ref{thm:EL main} (particularly the convergence part) and \ref{thm:critical points} is a careful analysis of the operator $A$ restricted to radial functions (see proofs of Proposition \ref{prop:A smoothing} and Lemma \ref{lemma:mass trapping}) and the resulting smoothing estimate:
\begin{theorem}\label{thm:A smoothing}
    For any non-negative and radial $f\in L^{3/2}(\R^2)$ and $0<a<b$ we have $Af \in H^{1/2,3/2}([a,b])$ and
    \begin{align}\label{eq:A Smoothing}
        \|Af\|_{H^{1/2,3/2}([a,b])}\lesssim_{a,b}\|f\|_{3/2}^2,
    \end{align}     
    where $H^{1/2,3/2}([a,b])$ is a Bessel-potential function space (see the Notation section below for the precise definition). That is, the quadratic operator $A=\cR^*[(\cR\cdot)^2]$ is smoothing of order 1/2 on non-negative radial functions, locally on any interval $[a,b]\subseteq(0,\infty)$.
\end{theorem}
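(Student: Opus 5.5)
The plan is to reduce everything to explicit one-dimensional integral operators in the radial variable and then prove half a derivative of smoothing for the resulting kernel. For radial $f(x)=F(|x|)$, the Radon transform is independent of $\alpha$:
\[\cR f(\alpha,s)=g(s):=2\int_{|s|}^\infty \frac{F(r)\,r\,dr}{\sqrt{r^2-s^2}},\]
which is an Abel-type transform. The adjoint acting on an $\alpha$-independent even function $h(s)$ gives a radial function
\[\cR^*h(x)=\int_{\bbS^1}h(|x|\cos\theta)\,d\theta=2\int_{-\rho}^{\rho}\frac{h(s)\,ds}{\sqrt{\rho^2-s^2}},\qquad \rho=|x|.\]
Hence, with $f\ge0$,
\[Af(\rho)=4\int_0^\rho \frac{g(s)^2\,ds}{\sqrt{\rho^2-s^2}}.\]
The first step is to record that $g\in L^3(\R)$ with $\|g\|_3\lesssim\|f\|_{3/2}$, by \eqref{eq:Oberlin-Stein}, because $\cR f$ does not depend on $\alpha$. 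Consequently $G:=g^2\in L^{3/2}(\R)$ with $\|G\|_{3/2}\lesssim\|f\|_{3/2}^2$. The theorem then reduces to a linear statement: the operator $T G(\rho)=\int_0^\rho G(s)(\rho^2-s^2)^{-1/2}\,ds$ maps $L^{3/2}([0,b])$ into $H^{1/2,3/2}([a,b])$ for $0<a<b$.

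To prove this, I factor $(\rho^2-s^2)^{-1/2}=(\rho-s)^{-1/2}(\rho+s)^{-1/2}$ and split the integral into $s<a/2$ and $s\ge a/2$ using a smooth cutoff $\chi(s)$. On the piece $s<a/2$ with $\rho\in[a,b]$, the kernel is smooth in $\rho$ with bounds uniform in $s$. So that piece lies in $C^\infty([a,b])$ with norms controlled by $\|G\|_{L^1([0,a/2])}\lesssim\|G\|_{3/2}$. On the piece $s\ge a/2$, the factor $(\rho+s)^{-1/2}$ is smooth and bounded away from zero on the relevant region. There I would write $(\rho+s)^{-1/2}=\psi(\rho,s)$ and expand $\psi$ in a Taylor series in $\rho-s$, or, more simply, treat $\psi(\rho,s)$ as a smooth symbol. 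The main term is then the Riemann--Liouville fractional integral
\[I^{1/2}(\chi G)(\rho)=\int_{-\infty}^\rho (\rho-s)^{-1/2}\,\chi G(s)\,ds,\]
multiplied by a smooth function of $\rho$. The remainder terms come from $\psi(\rho,s)-\psi(\rho,\rho)=(\rho-s)\tilde\psi(\rho,s)$, and their kernel is $(\rho-s)^{1/2}\tilde\psi$, which is Lipschitz. I would bound these by a full derivative directly.

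The key estimate is that $I^{1/2}$ gains exactly half a derivative in $L^{p}$ for $1<p<\infty$. Its Fourier multiplier is $c\,(i\xi)^{-1/2}$, so $D^{1/2}I^{1/2}$ has symbol $|\xi|^{1/2}(i\xi)^{-1/2}$. This symbol is homogeneous of degree zero and smooth away from the origin, hence a Mikhlin multiplier and bounded on $L^{3/2}$. The low frequencies are handled by localisation: the function $\chi G$ is compactly supported in $[a/2,b]$, and I only need the output on $[a,b]$. I would therefore multiply by a cutoff $\eta(\rho)$, and use $(1-\Delta)^{1/4}\approx 1+D^{1/2}$ together with the fact that $I^{1/2}$ maps compactly supported $L^{3/2}$ functions to $L^{3/2}_{\mathrm{loc}}$ by Young's inequality. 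A smooth cutoff times the output stays in $H^{1/2,3/2}$, by the boundedness of multiplication by Schwartz functions on Bessel potential spaces. Combining the pieces gives
\[\|Af\|_{H^{1/2,3/2}([a,b])}\lesssim_{a,b}\|G\|_{3/2}\lesssim\|f\|_{3/2}^2.\]

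I expect the main obstacle to be the non-translation-invariant factor $\psi(\rho,s)$, and more generally keeping the localisation honest in the restriction-space norm of $H^{1/2,3/2}([a,b])$. A Taylor expansion to one term, with the remainder kernel treated as a $W^{1,3/2}$-bounded operator, should suffice. If it does not, I would instead use complex interpolation between $L^{3/2}$ and $W^{1,3/2}$ for the family of kernels $(\rho-s)^{z-1}\psi$.
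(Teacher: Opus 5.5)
Your plan is essentially the paper's proof of Proposition~\ref{prop:A smoothing}, step for step. It uses the same radial formula $Af(\rho)=4\int_0^\rho g(s)^2(\rho^2-s^2)^{-1/2}\,ds$ and the same split: a piece with $s$ well below $\rho$ (smooth kernel, full derivative, the paper's $A_1$), and the factorisation $(\rho^2-s^2)^{-1/2}=(\rho-s)^{-1/2}(\rho+s)^{-1/2}$ with the smooth factor frozen on the diagonal. The main term is an Abel/Riemann--Liouville integral whose half-derivative is the degree-zero multiplier $c(I+H)$ (the paper's $A_2$). The remainder has kernel $(\rho-s)_+^{1/2}$ times a smooth function and is controlled by a full derivative (the paper's $A_3$). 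You freeze at $(\rho,\rho)$, while the paper freezes at $(s,s)$ and puts $(2s)^{-1/2}$ inside the convolution; this makes no difference.

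Two details need correcting.

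First, the remainder kernel $(\rho-s)_+^{1/2}\tilde\psi(\rho,s)$ is only $\tfrac12$-H\"older, not Lipschitz. Its $\rho$-derivative contains $\tfrac12(\rho-s)_+^{-1/2}\tilde\psi$, so the derivative of the remainder is not pointwise bounded. It is, however, dominated by another order-$\tfrac12$ fractional integral of $|\chi G|$, and so lies in $L^{3/2}$ on the support of your cutoff. This follows from Hardy--Littlewood--Sobolev plus H\"older, as in the paper's treatment of $A_3$, or from Young's inequality with the kernel restricted to $|\rho-s|\lesssim b$.

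Second, $u:=I^{1/2}(\chi G)$ decays only like $\rho^{-1/2}$ as $\rho\to\infty$ (indeed $u(\rho)\approx\rho^{-1/2}\int\chi G$). So $u\notin L^{3/2}(\R)$, and boundedness of multiplication by Schwartz functions on $H^{1/2,3/2}(\R)$ cannot be applied to it as you state. You need a Leibniz-type estimate for $D^{1/2}(\eta u)$ that uses global information on $u$. The paper does this via Theorem~\ref{thm:Leibniz}, together with $\|D^{1/2}u\|_{3/2}+\|u\|_{6}\lesssim\|G\|_{3/2}$, where the $L^6$ bound comes from Hardy--Littlewood--Sobolev. Alternatively, truncate the kernel $t_+^{-1/2}$ smoothly at distance $\sim b$ from the diagonal. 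This leaves $u$ unchanged on the support of $\eta$, and the truncated kernel has $(1+|\xi|^2)^{1/4}\widehat{k\zeta}(\xi)$ a Mikhlin multiplier, where $k\zeta(t):=t_+^{-1/2}\zeta(t)$.
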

This estimate is to be compared to the classical linear fractional-integral identity
\begin{align}\label{eq:fourier slice}
     \mathcal{R}^*\mathcal{R}f=c(-\Delta)^{-1/2}f,
\end{align}
found in \cite{Helgason}, which holds for all (not necessarily radial) smooth functions $f$. The restriction to radial function in \eqref{eq:A Smoothing} is motivated by several identities for the operator $A$ that hold only in the radial case and which facilitate the analysis carried out in Subsection \ref{subsec:sobolev}. We do not address whether the smoothing order of $1/2$ in Theorem \ref{thm:A smoothing} is optimal. We remark however, that a potential improvement in the smoothness of the estimate would not imply any improvements on the conclusions of Theorems \ref{thm:EL main} and \ref{thm:critical points}.\\\\
The proof of Theorem \ref{thm:A smoothing} relies upon the radiality assumption on the underlying input function. It is not clear to the author whether this analysis can be carried out in the non-radial case without adding considerable complexity and new ideas to the argument. An extensive discussion of this matter can be found in Subsection \ref{sec:non-radial}.\\\\
Below we outline the structure of the paper for the reader's convenience.
\begin{itemize}
    \item In Section \ref{section:well-posedness}, we show that equation \eqref{eq:EL} has a global solution in $L^{3/2}(\R^2)$.
    \item In Section \ref{section:monotonicity}, we prove various monotonicity properties of the flow \eqref{eq:EL} for non-negative initial data, the main one being that the functional $\Phi(f(\cdot,t))$ is non-decreasing along \eqref{eq:EL}.
    \item In Section \ref{section:convergence} we prove precompactness of the trajectory of \eqref{eq:EL} for radial initial data and existence of a non-trivial subsequential limit after rescaling. This completes the proofs of Theorems \ref{thm:EL main} and \ref{thm:A smoothing}.
    \item In Section \ref{section:critical points} we prove Theorem \ref{thm:critical points} and Proposition \ref{prop:extremisers}.
\end{itemize}

\acknowledgement{I am grateful to my advisor Anthony Carbery for his active involvement in the conception and development of this work. I also thank Jonathan Hickman and Paolo Ciatti for their suggestions. This research was funded by EPSRC and the Maxwell Institute for Mathematical Sciences.}

\section*{Notation}
\begin{itemize}
    \item We denote the positive half-line $[0,\infty)$ by $\R_+$.
    \item  Let $x,y\in\R_+$. We write: 
    \begin{enumerate}
        \item $x\lesssim y$ iff there exists a constant $C>0$ such that $x\leq Cy$,
        \item  $x\gtrsim y$ iff $y\lesssim x$,
        \item $x\sim y $ iff $x\lesssim y$ and $y\lesssim x$.
    \end{enumerate}

    \item We denote the support of a function $\phi$ by $\mathrm{supp}(\phi)$.
    \item Let $f:\R^2\to \R$ be a function and $\lambda>0$. We denote $f_\lambda:=f(\lambda \cdot)$.
    \item Let $u:\R^n\times \R_+\to \R$ denote a non-negative function of a spatial variable $x\in \R^n$ and a time variable $t\in \R_+$. For each $t\geq 0$, we denote the state $u(\cdot,t):\R^n\to \R$ of $u$ at time $t$ as $u(t)$. For any $p>1$, as a convention we write
    \[\int u(t)^p:=\int_{\R^n} u(x,t)^pdx.\]
    When the specific value of $t$ does not play a role, we will suppress it and write $\int u^p$ instead of $\int u(t)^p$, and any statement for $\int u^p$ is meant for all $t\geq 0$.
    \item For $\lambda>0$ and $u:\R^n\times \R_+\to \R$ we denote $u_\lambda(t):=u(\lambda\cdot,t)$.
    \item For any domain $\Omega\subseteq \R^n$, we denote the space of distributions on $\Omega$ by $\cD(\Omega)$.
    \item We use the standard definition for the Fourier transform in $\R^n$:
    \[\widehat{f}(\xi):=\int_{\R^n} f(x) e^{-2\pi i x\cdot \xi}dx.\]
    \item For $\alpha\neq0$ we use $(-\Delta)^\alpha$ to denote the Fourier multiplier with symbol $(2\pi|\xi|)^{2\alpha}$ in $\R^2$. For $\alpha>0$, we denote the Riesz potential of order $\alpha$ $(-\Delta)^{-\alpha/2}$ by $I_\alpha$.
    \item Let $n\in \Z_+, s>0, p>1$ and let $\Omega$ be a suitable (e.g. bounded Lipschitz) domain in $\R^n$. We define the Bessel-potential space 
    \[H^{s,p}(\R^n):=\{u\in\cD(\R^n):\ \|(I-\Delta)^{s/2}u\|_{L^p(\R^n)}<\infty\}, \]
     with norm 
    \[\|u\|_{H^{s,p}(\R^n)}:=\|(I-\Delta)^{s/2}u\|_{L^p(\R^n)}.\]
    The domain-restricted space $H^{s,p}(\Omega)$ is defined as
    \[H^{s,p}(\Omega):=\{u\in \cD(\Omega): \exists\, U\in H^{s,p}(\R^n):\ U|_\Omega=u\},\]
    with the quotient norm 
    \[\|u\|_{H^{s,p}(\Omega)}:=\inf\{\|U\|_{H^{s,p}(\R^n)}: U\in H^{s,p}(\R^n),\ U|_{\Omega}=u\} \]
    \item If $X$ is a Banach space, $(x_n)$ is a sequence in $X$ and $x\in X$, we denote weak convergence of  $(x_n)$ to $x$ by $x_n\rightharpoonup x$.
\end{itemize}

\section{Well-Posedness of the Euler--Lagrange Flow}\label{section:well-posedness}
We first show that the operator $A$ is well-defined for $f\in L^{3/2}(\R^2)$ and that $A: L^{3/2}(\R^2)\to L^{3}(\R^2)$. Indeed, by the boundedness of $\cR^*,\cR$, we have
\begin{align}\label{eq:A bound}
    \left(\int |Af|^3\right)^{1/3}&=\left(\int \big|\cR^*\left(|\cR f|\,\cR f\right)\big|^3\right)^{1/3}\nonumber\\
    &\leq\|\cR\|\|(\cR f)^2\|_{3/2}=\|\cR\|\|\cR f\|_3^2\leq\|\cR\|^3\|f\|_{3/2}^2.
\end{align}
Furthermore, if $0\neq f\in L^{3/2}(\R^2)$, then $\lambda(f)$ is also well-defined. Indeed, by H\"older's inequality we have
\begin{align}\label{eq:lambda upper bound}
    \int|f|^{1/2}|A f|^2\leq\|f\|_{3/2}^{1/2}\|Af\|_3^2\leq \|\cR\|^6\|f\|_{3/2}^{4+\frac{1}{2}}<\infty.
\end{align} 
We now consider the issue of existence and uniqueness of solutions to \eqref{eq:EL}. The presence of the square-root factor in $\int |f|^{1/2}|Af|^2\mathrm{sgn}(f)$ poses a mild obstacle in applying the classical Cauchy--Lipschitz--Picard--Lindel\"of theory, which requires the infinitesimal generator of the flow to be locally Lipschitz in the state-space $L^{3/2}(\R^2)$. We get around this issue by introducing an appropriate change of variables. In the rest of this section, let $f(t)$ be the formal solution of \eqref{eq:EL} with $f_0\geq 0$. By Remark \ref{remark:non-neg}, we have $f(t)\geq 0$ for all $t\geq 0$. We define 
\begin{gather*}
    \Lambda(t):=\int_0^t\lambda(f(s))ds, \qquad
    \psi( t):=e^{\Lambda(t)}f(t).
\end{gather*}
The dependence of $\Lambda(t)$ on $f(t)$ is implicit and will always be clear from context. We will abuse notation and write $\Lambda(f)$ instead of $\Lambda(t)$ when the particular value of $t$ is irrelevant. The function $\psi$ satisfies 
\begin{align*}
    \partial_t \psi&= e^{\Lambda(f)}\partial_t f+\lambda(f)e^{\Lambda(f)}f\\
    &=e^{\Lambda(f)}A(f)^2-\lambda(f)e^{\Lambda(f)}{f}+\lambda(f)e^{\Lambda(f)}{ f}\\
    &=e^{\Lambda(f)}A(f)^2\\ 
    &=e^{\Lambda(f)}A(e^{-\Lambda(f)}\psi)^2\\
    &=e^{-3\Lambda(f)}A(\psi)^2.
\end{align*}
We may write 
\begin{gather*}
    \int \psi^{3/2}=e^{3\Lambda(f)/2}\int f^{3/2},
\end{gather*}
so that 
\begin{gather*}
    e^{-3\Lambda(f)}=\left(\frac{\int f^{3/2}}{\int \psi^{3/2}}\right)^2.
\end{gather*}
We prove the following fundamental invariance property of \eqref{eq:EL}.
\begin{lemma}\label{le:norm invariance}
    Let $f(t)$ be the solution of \eqref{eq:EL} with non-negative initial data $f_0\in L^{3/2}(\R^2)$. Then $\int f(t)^{3/2}$ is constant in $t$.
\end{lemma}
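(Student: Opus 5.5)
The plan is to differentiate $\int f(t)^{3/2}$ in time and check that the choice \eqref{eq:lambda} of $\lambda$ makes the derivative vanish. By Remark \ref{remark:non-neg}, $f(t)\ge 0$ for all $t$, so $|f|=f$ and $\mathrm{sgn}(f)=1$ on the support of $f$. Formally, using \eqref{eq:EL},
\begin{align*}
\frac{d}{dt}\int f^{3/2} &= \frac32\int f^{1/2}\,\partial_t f = \frac32\left(\int f^{1/2}(Af)^2 - \lambda(f)\int f^{3/2}\right),
\end{align*}
and this is exactly zero by the definition \eqref{eq:lambda} of $\lambda(f)$. Equivalently, one can phrase this as the uniqueness statement that $\lambda(f)$ is the only constant making the right-hand side vanish.

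To make the argument rigorous, I would first justify differentiating under the integral. Assume $f\in C^1([0,T),L^{3/2}(\R^2))$, which is the regularity class of the solutions considered. Then the map $F(u)=\int |u|^{3/2}$ is $C^1$ on $L^{3/2}$, with derivative $DF(u)h=\frac32\int |u|^{1/2}\mathrm{sgn}(u)\,h$. Indeed, $|u|^{1/2}\mathrm{sgn}(u)\in L^3$ and depends continuously on $u$: the map $u\mapsto |u|^{1/2}\mathrm{sgn}(u)$ is $1/2$-Hölder as a map $L^{3/2}\to L^3$. The chain rule then gives the identity above.

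All terms are finite. $\partial_t f\in L^{3/2}$, $f^{1/2}\in L^3$, and \eqref{eq:A bound} and \eqref{eq:lambda upper bound} control $\int f^{1/2}(Af)^2$. Moreover $\lambda(f)$ is well defined because $f(t)\ne0$ by Remark \ref{remark:non-neg}.

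The main obstacle is regularity. Well-posedness in $C^1$ may be established only later, and via the $\psi$ variable. In that case I would instead verify the statement through $\psi$. We have $\partial_t\psi=e^{-3\Lambda}A(\psi)^2$ and $\int\psi^{3/2}=e^{3\Lambda/2}\int f^{3/2}$. Also $\Lambda'=\lambda(f)=e^{-3\Lambda}\int\psi^{1/2}A(\psi)^2/\int\psi^{3/2}$, using the homogeneity $A(cu)=c^2A(u)$. Differentiating then gives
\[
\frac{d}{dt}\log\int\psi^{3/2}=\frac32\Lambda',
\]
which is equivalent to $\frac{d}{dt}\log\int f^{3/2}=0$. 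This computation needs only the $C^1$ regularity of the Lipschitz $\psi$-system.
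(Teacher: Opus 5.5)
Your proposal is correct and follows the paper's proof. Like the paper, you differentiate $\int f^{3/2}$, substitute \eqref{eq:EL}, and note that the definition \eqref{eq:lambda} of $\lambda(f)$ makes the two terms cancel; your chain-rule argument for the $C^1$ functional $u\mapsto\int|u|^{3/2}$ on $L^{3/2}$ justifies a differentiation step the paper leaves implicit. The $\psi$-based fallback is unnecessary: the paper's Lipschitz system \eqref{eq:modEL} is itself obtained by invoking this lemma, and for $f:=\|f_0\|_{3/2}\psi/\|\psi\|_{3/2}$ reconstructed from it, norm constancy holds by construction.
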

\begin{proof}
    By \eqref{eq:EL} and the definition \eqref{eq:lambda} of $\lambda$  we have
    \begin{align}\label{eq:omega on the sphere}
        \frac{d}{dt}\int f(t)^{3/2}&=\frac{3}{2}\int f(t)^{1/2}\partial_t f(t)=\frac{3}{2}\int f(t)^{1/2}\left[\left(Af(t)\right)^2-\lambda(f(t))f(t)\right]\\
        &=\frac{3}{2}\left[\lambda(f(t))\int f(t)^{3/2}-\lambda(f(t))\int f(t)^{3/2}\right]=0 \nonumber
    \end{align}
\end{proof}
By Lemma \ref{le:norm invariance}, $\int f(t)^{3/2}=\int f_0^{3/2}$ for all $t\geq 0$ and the equation for $\psi$ becomes
\begin{gather}\label{eq:modEL}
    \partial_t \psi= \left(\frac{\int  f_0^{3/2}}{\int \psi^{3/2}}\right)^2 A(\psi)^2,\  \ \psi(x,0)=e^{\Lambda(f_0)} f_0(x)= f_0(x).
\end{gather}
We show that equation \eqref{eq:modEL} is well-defined. Indeed, we have
\begin{gather}\label{eq:lower bound}
    \int \psi(t)^{3/2}= e^{3\Lambda(t)/2}\int f(t)^{3/2}\geq \int  f_0^{3/2}>0,\ t\geq 0,
\end{gather}
since $\Lambda(t)\geq0$. We may express the solution $f(t)$ to \eqref{eq:EL} as a function of the solution to \eqref{eq:modEL} as follows:
\[ f(t) := \frac{\psi(t)}{\|\psi(t)\|_{3/2}}\| f_0\|_{3/2}.\]
Well-posedness of \eqref{eq:modEL} is equivalent to well-posedness of the following:
\begin{equation}\label{eq:main_ode}
\frac{d}{dt} \psi(t) = c{\Psi}(\psi), \;\;\;\ \psi(0) = \psi_0,
\end{equation}
where $c:=(\int f_0^{3/2})^2$ and the nonlinear map ${\Psi}: L^{3/2} \to L^{3/2}$ is given by\footnote{Nothing substantial changes if we replace $|\psi|$ in the numerator here by $\psi$.}  
\[ {\Psi}(\psi) = \left[\frac {\cR^\ast (\cR|\psi|)^2}{\int |\psi|^{3/2}}\right]^2, \quad \psi\neq 0,\]
and $\Psi(0)=0$. That ${\Psi}: L^{3/2} \to L^{3/2}$ is easily seen by examining the chain
\[ \psi \mapsto |\psi| \mapsto \cR|\psi| \mapsto (\cR|\psi|)^2 \mapsto \cR^\ast(\cR|\psi|)^2 \mapsto (\cR^\ast(\cR|\psi|)^2)^2\]
\[ L^{3/2} \to L^{3/2} \to L^3 \to L^{3/2} \to L^3 \to L^{3/2}.\]
Below we will show that for all $\psi \in L^{3/2}$ we have, with $\|\cR\|$ denoting the norm of $\cR \in \mathcal{L}(L^{3/2}, L^3)$,
\begin{equation}\label{eq:normG}
\|{\Psi}(\psi)\|_{3/2} \leq \|\cR\|^6 \|\psi\|_{3/2}.
\end{equation}

\medskip
\noindent
We wish to apply the classical Cauchy--Lipschitz--Picard--Lindel\"of theorem in order to be able to assert that, for arbitrary initial data $\psi_0 \in L^{3/2}$, the initial value problem \eqref{eq:main_ode} has a unique solution in $C^1([0, \infty), L^{3/2})$.

\medskip
\noindent
\begin{theorem}[Br\'ezis \cite{Brezis}, Theorem 7.3]\label{thm:Brezis}
 Let $X$ be a Banach space and suppose $\Psi : X \to X$. Suppose that $\Psi$ is globally Lipschitz, that is, there is a constant $K < \infty$ such that for all $x, y \in X$ we have 
 \[ \|\Psi(x) - \Psi(y)\|_X \leq K \|x - y\|_X.\] 
 Then for all $x_0 \in X$ 
 there exists a unique solution $x(t) \in C^1([0, \infty), X)$ 
 to the problem
 \[ \frac{d}{dt} x(t) = \Psi(x), \;\;\;\ x(0) = x_0.\]
 \end{theorem}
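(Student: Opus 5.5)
The plan is the standard Picard iteration argument, run on the integral form of the equation with a weighted sup-norm, so that no smallness of the time interval is needed. First I would reduce to an integral equation. Since $\Psi$ is globally Lipschitz, it is continuous. Hence for a continuous curve $x:[0,T]\to X$ the map $s\mapsto\Psi(x(s))$ is continuous, and its Riemann (Bochner) integral in $X$ makes sense. By the fundamental theorem of calculus for $X$-valued functions, $x\in C^1([0,T],X)$ solves $x'=\Psi(x)$, $x(0)=x_0$, if and only if $x\in C([0,T],X)$ satisfies
\[ x(t)=x_0+\int_0^t\Psi(x(s))\,ds,\qquad t\in[0,T]. \]
Indeed, the right-hand side is automatically $C^1$ once $x$ is continuous.

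Next, fix $T>0$ and work in $E_T:=C([0,T],X)$ with the Bielecki norm $\|x\|_*:=\sup_{t\in[0,T]}e^{-2Kt}\|x(t)\|_X$. This norm is equivalent to the usual sup-norm, so $E_T$ is a Banach space. Define $(\Gamma x)(t):=x_0+\int_0^t\Psi(x(s))\,ds$; then $\Gamma$ maps $E_T$ into itself. For $x,y\in E_T$ the Lipschitz bound gives
\[ \|(\Gamma x)(t)-(\Gamma y)(t)\|_X\le K\int_0^t e^{2Ks}\,ds\,\|x-y\|_*\le \tfrac12 e^{2Kt}\|x-y\|_*. \]
Thus $\|\Gamma x-\Gamma y\|_*\le\frac12\|x-y\|_*$, and the Banach fixed point theorem yields a unique solution $x_T\in C^1([0,T],X)$. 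Alternatively, one could iterate on intervals of fixed length $1/(2K)$; this step length is independent of the data, which is exactly why no blow-up can occur.

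Finally I would glue the local solutions. If $T<T'$, the restriction of $x_{T'}$ to $[0,T]$ is a fixed point of $\Gamma$ on $E_T$, so it equals $x_T$ by uniqueness. Setting $x(t):=x_T(t)$ for any $T>t$ is therefore well defined and gives a solution in $C^1([0,\infty),X)$. Uniqueness on $[0,\infty)$ follows by the same argument: any two solutions agree on every $[0,T]$. One could also argue via Gronwall, since $\|x(t)-y(t)\|\le K\int_0^t\|x-y\|$ forces $x\equiv y$.

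I do not expect a genuine obstacle. The only point needing care is the equivalence between $C^1$ solutions and continuous solutions of the integral equation, which uses continuity of $\Psi$ and the vector-valued fundamental theorem of calculus.
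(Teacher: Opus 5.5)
Your proposal is correct, and it is essentially the standard Cauchy--Lipschitz--Picard argument; the paper gives no proof of its own and simply cites Br\'ezis, whose proof is the same weighted-norm contraction on the integral equation. The only difference is cosmetic: Br\'ezis works in the weighted space on all of $[0,\infty)$ with norm $\sup_{t\ge0}e^{-kt}\|u(t)\|$ and proves uniqueness separately by a Gronwall argument, whereas your restriction to $[0,T]$ followed by gluing gives uniqueness directly from the fixed-point theorem.
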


It may also easily be shown that if $\Psi (x_0) \neq 0$, then the solution $x(t)$ satisfies $\Psi (x(t)) \neq 0$ for all $t \geq 0$. In fact, in our application, if $\psi_0 \geq 0$, the positivity of $\Psi$ ensures that not only is $\psi(t) \neq 0$ for all $t\geq 0$, but indeed that $\psi(t)$ is pointwise increasing as time increases.
 
\medskip
\noindent
In order to apply this result, we need:

\begin{proposition}\label{prop:Psi Lip}
For the function $\Psi$ as above, there is a constant $K$ such that for all $u,v \in L^{3/2}$,
\[ \| \Psi(u) - \Psi(v) \|_{3/2} \leq  K\|\cR\|^6 \|u-v\|_{3/2}.\]
\end{proposition}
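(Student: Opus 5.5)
The plan is to exploit the fact that $\Psi$ is positively homogeneous of degree one. We factor it through a quadratic map whose Lipschitz behaviour follows directly from the boundedness of $\cR$ and $\cR^*$, and then control the denominator by ordering the two norms. Write $\|\cdot\|$ for $\|\cdot\|_{3/2}$ and set
\[ N(\psi):=\cR^*\big((\cR|\psi|)^2\big), \qquad\text{so that}\qquad \Psi(\psi)=\frac{N(\psi)^2}{\|\psi\|^{3}}\quad (\psi\neq 0), \]
since $(\int|\psi|^{3/2})^2=\|\psi\|^3$. The numerator is homogeneous of degree $4$ and the denominator of degree $3$. In this form the computation in \eqref{eq:A bound} gives $\|N(\psi)\|_3\leq \|\cR\|^3\|\psi\|^2$. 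Hence $\|\Psi(\psi)\|\leq \|N(\psi)\|_3^2/\|\psi\|^3\leq\|\cR\|^6\|\psi\|$, which is \eqref{eq:normG}. This also settles the case where one of $u,v$ is $0$, since $\Psi(0)=0$.

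\textbf{Step 1: $N$ is locally Lipschitz with a linear-growth constant.} Using $\big||u|-|v|\big|\leq|u-v|$ and the positivity of $\cR$, we have $|\cR|u|-\cR|v||\leq \cR|u-v|$ pointwise. Boundedness of $\cR^*:L^{3/2}\to L^3$ and H\"older's inequality then give
\[ \|N(u)-N(v)\|_3\leq \|\cR\|\,\big\|\cR|u|-\cR|v|\big\|_3\,\big\|\cR|u|+\cR|v|\big\|_3\leq \|\cR\|^3\|u-v\|\,(\|u\|+\|v\|). \]
Applying H\"older once more,
\[ \|N(u)^2-N(v)^2\|_{3/2}\leq \|N(u)-N(v)\|_3\,\big(\|N(u)\|_3+\|N(v)\|_3\big)\leq \|\cR\|^6\|u-v\|(\|u\|+\|v\|)^3. \]

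\textbf{Step 2: splitting and the denominator.} By symmetry we may assume $0<\|v\|\leq\|u\|$. We split
\[ \Psi(u)-\Psi(v)=\frac{N(u)^2-N(v)^2}{\|u\|^3}+N(v)^2\Big(\frac1{\|u\|^3}-\frac1{\|v\|^3}\Big). \]
For the first term, Step 1 together with $\|u\|+\|v\|\leq 2\|u\|$ bounds it by $8\|\cR\|^6\|u-v\|$.

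For the second term, we use $\|N(v)^2\|_{3/2}\leq\|\cR\|^6\|v\|^4$ and the elementary bound
\[ \|u\|^3-\|v\|^3\leq 3\|u\|^2\big(\|u\|-\|v\|\big)\leq 3\|u\|^2\|u-v\|. \]
This gives a bound of
\[ \|\cR\|^6\|v\|^4\cdot\frac{3\|u\|^2\|u-v\|}{\|u\|^3\|v\|^3}=3\|\cR\|^6\frac{\|v\|}{\|u\|}\|u-v\|\leq 3\|\cR\|^6\|u-v\|. \]
Adding the two estimates yields the proposition with $K=11$.

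\textbf{Main obstacle.} The only delicate point is the singular denominator near $\psi=0$. A naive estimate would produce a constant that blows up as $\|u\|,\|v\|\to0$. The ordering $\|v\|\leq\|u\|$ is the fix: it lets the extra powers of $\|v\|$ in the numerator absorb the negative powers in the denominator. This is exactly where the degree-one homogeneity is used, and it is what makes the Lipschitz constant global rather than merely local.
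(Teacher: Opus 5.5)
Your proof is correct, and every estimate checks out, including the explicit constant $K=11$. It differs from the paper mainly in how the singular denominator is controlled.

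\textbf{The paper's route.} It first reduces to $u,v\geq 0$ and factors $\Psi=S\circ M_u\circ\cR^*\circ S\circ\cR$. It then estimates each map in the chain separately, including a Lipschitz bound for the normalisation $M_u h=h/\int u^{3/2}$. To make the constant uniform it splits into two regimes:
\begin{enumerate}
\item a far regime $\|u-v\|_{3/2}\geq\max\{\|u\|_{3/2},\|v\|_{3/2}\}/2$, which is disposed of by \eqref{eq:normG} and the triangle inequality;
\item a near regime, where $\|u\|_{3/2}$ and $\|v\|_{3/2}$ are comparable, so all powers of the norms cancel up to absolute constants.
\end{enumerate}

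\textbf{Your route.} You avoid this dichotomy. After ordering $\|v\|_{3/2}\leq\|u\|_{3/2}$ and adding and subtracting $N(v)^2/\|u\|_{3/2}^3$, the difference term carries the larger norm in its denominator. Meanwhile $N(v)^2$, which has degree four in $\|v\|_{3/2}$, is paired with $\|v\|_{3/2}^{-3}$. This leaves only the harmless ratio $\|v\|_{3/2}/\|u\|_{3/2}\leq 1$. You also absorb the reduction to non-negative functions into the pointwise bound $\bigl|\cR|u|-\cR|v|\bigr|\leq\cR|u-v|$.

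\textbf{Comparison.} The paper's version is more modular, since each link of the chain is estimated on its own. Yours is shorter and single-case, uses the degree-one homogeneity of $\Psi$ more transparently, and produces an explicit constant where the paper leaves it unspecified.
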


\begin{proof}
We note that it suffices to prove this for $u$ and $v$ nonnegative, since if we have that, then for general $u$, $v$ we have
\[ \| \Psi(u) - \Psi(v) \|_{3/2} =  \| \Psi(|u|) - \Psi(|v|) \|_{3/2} \leq K \|\cR\|^6 \||u|-|v|\|_{3/2} \leq  K\|\cR\|^6 \|u-v\|_{3/2}.\]

\medskip
\noindent
We thus assume $u$, $v\geq0$, and 
we proceed as above by decomposing $\Psi$ into simpler pieces. 

\medskip
\noindent
Let $S:L^3 \to L^{3/2}$ be the squaring map $h \mapsto h^2$. For $u \in L^{3/2}_+$ fixed let $M_u: L^{3} \to L^{3}$ be the map $h \mapsto h/\int u^{3/2}$.

\medskip
We record some basic facts about these maps which we shall use below:

\begin{itemize}
\item $\|\cR u - \cR v\|_{3} \leq \|\cR\| \|u-v\|_{3/2}$
\item $\|\cR^\ast u - \cR^\ast v\|_{3} \leq \|\cR\| \|u-v\|_{3/2}$
\item $\|Sh\|_{3/2} = \|h\|_3^2$
\item $\|Sh - Sk\|_{3/2} \leq (\|h\|_{3} + \|k \|_{3})\|h-k\|_{3}$
\item For $u,v \in L^{3/2}_+$, 
\[\|M_u h - M_v k\|_3 \leq \frac{1}{\int u^{3/2}}\|h-k\|_3 + \frac{3}{2}\frac{\|k\|_3}{\int u^{3/2}\int v^{3/2}} (\|u - v\|_{3/2})( \|u\|_{3/2}^{1/2} +  \|v\|_{3/2}^{1/2}).\] 
\end{itemize}
Only the last of these needs discussion: we have

\[ \|M_u h - M_v k\|_3 = \left\| \frac{h}{\int u^{3/2}} - \frac{k}{\int v^{3/2}} \right\|_3 = \left\| \frac{h}{\int u^{3/2}} - \frac{k}{\int u^{3/2}} + \frac{k}{\int u^{3/2}} - \frac{k}{\int v^{3/2}} \right\|_3\]
\[ \leq  \left\| \frac{h}{\int u^{3/2}} - \frac{k}{\int u^{3/2}}\right\|_3 + \left\|  \frac{k}{\int u^{3/2}} - \frac{k}{\int v^{3/2}} \right\|_3\]
\[ \leq \frac{1}{\int u^{3/2}}\|h-k\|_3 + \|k\|_3 \left|\frac{\int u^{3/2} - \int v^{3/2}}{\int u^{3/2}\int v^{3/2}}\right|,\]
and
\begin{align*}
    \left|\int u^{3/2} - v^{3/2}\right| = \left|\|u\|_{3/2}^{3/2} - \|v\|_{3/2}^{3/2}\right| &\leq \frac{3}{2} (\|u\|_{3/2} - \|v\|_{3/2})\left( \|u\|_{3/2}^{1/2} +  \|v\|_{3/2}^{1/2}\right)\\ &\leq  \frac{3}{2} (\|u - v\|_{3/2})\left( \|u\|_{3/2}^{1/2} +  \|v\|_{3/2}^{1/2}\right),
\end{align*} 
thus establishing the last bullet point.

\medskip
\noindent
Note that for $u \in L^{3/2}_+$, we have
\[ \Psi(u) = S \circ M_u \circ \cR^\ast \circ S \circ \cR u.\]

With this in mind, we now let, for $u \in L^{3/2}_+$,
\[\Psi_1 (u) = M_u \circ \cR^\ast \circ S \circ \cR u, \;\;
\Psi_2 (u) = \cR^\ast \circ S \circ \cR u \mbox{ and } \Psi_3 (u) =  S \circ \cR u.\]

We immediately note that 
\begin{align*}
    \|\Psi(u)\|_{3/2} &= \|S(\Psi_1(u))\|_{3/2} = \|\Psi_1(u)\|_3^2 = \|M_u(\Psi_2(u))\|_3^2 =\frac{\|\Psi_2(u)\|_{3}^2}{(\int u^{3/2})^2}
    \\ &= \|u\|_{3/2}^{-3} \|\|\cR^\ast (\Psi_3(u))\|_3^2 \leq \|\cR\|^2 \|u\|_{3/2}^{-3} \|\Psi_3(u)\|_{3/2}^2 \\
    &= \|\cR\|^2 \|u\|_{3/2}^{-3} \| S(\cR u)\|_{3/2}^2 
    \\&= \|\cR\|^2 \|u\|_{3/2}^{-3} \| \cR u\|_3^4
\leq \|\cR\|^6 \|u\|_{3/2}^{-3}  \|u\|_{3/2}^{4} = \|\cR\|^6 \|u\|_{3/2},
\end{align*}

establishing \eqref{eq:normG}.

\medskip
\noindent
Now we consider the Lipschitz condition as in the statement of the proposition. Note first that if $u$ and $v$ are such that $\|u-v\|_{3/2} \geq  \max\{\|u\|_{3/2}, \|v\|_{3/2}\}/2$, we have
\begin{align*}
    \|\Psi(u) - \Psi(v)\|_{3/2} &\leq \|\Psi(u)\|_{3/2} + \|\Psi(v)\|_{3/2}\\
    &\leq 2\|\cR\|^6 \max\{\|u\|_{3/2}, \|v\|_{3/2}\} \leq 4\|\cR\|^6 \|u-v\|_{3/2},
\end{align*} 
by \eqref{eq:normG}. Hence, in proceeding, we may assume that $\|u\|_{3/2} \leq \|v\|_{3/2}$ and that $\|u-v\|_{3/2} < \|v\|_{3/2}/2$.

\medskip
\noindent
For such $u, v \in L^{3/2}_+$, using the above bullet points, we have 
\begin{align*}
     \| \Psi(u) - \Psi(v) \|_{3/2} &= \|S(\Psi_1(u)) - S(\Psi_1(v)) \|_{3/2}\\
     &\leq  (\|\Psi_1(u)\|_{3} + \|\Psi_1(v) \|_{3})\|\Psi_1(u)-\Psi_1(v)\|_{3}\\ 
    &= (\|\Psi_1(u)\|_{3} + \|\Psi_1(v) \|_{3})\|M_u(\Psi_2(u))-M_v(\Psi_2(v))\|_{3}\\
    &\leq (\|\Psi_1(u)\|_{3} + \|\Psi_1(v) \|_{3})
    \Bigg(\frac{1}{\int u^{3/2}}\|\Psi_2(u)-\Psi_2(v)\|_3 \\
    &\qquad+ \frac{2\|\Psi_2(v)\|_3}{\int u^{3/2}\int v^{3/2}} (\|u - v\|_{3/2})( \|u\|_{3/2}^{1/2} +  \|v\|_{3/2}^{1/2})\Bigg).
\end{align*}

Now 
\begin{align*}
    \|\Psi_2(u)-\Psi_2(v)\|_3 &= \|\cR^\ast \Psi_3(u) - \cR^\ast \Psi_3(v)\|_3 \leq \|\cR\| \|\Psi_3(u) - \Psi_3(v)\|_{3/2}\\
    &= \|\cR\| \|S(\cR u) - S(\cR v)\|_{3/2} \\
    &\leq \|\cR\|(\|\cR u\|_{3} + \|\cR v\|_{3})\|\cR u-\cR v\|_{3}\\ &\leq \|\cR\|^3 (\|u\|_{3/2} + \|v\|_{3/2})(\|u-v\|_{3/2}).
\end{align*}

\medskip
Set $A= \|v\|_{3/2}$. Thus, using $\|u\|_{3/2} \geq A/2$ and $\|u\|_{3/2} \leq 3A/2$, we obtain 
\[\| \Psi(u) - \Psi(v) \|_{3/2} \leq C\|\Psi_1(v)\|_3 \left( A^{-3/2} A \|\cR\|^3  + \|\Psi_2(v)\|_3 A^{-3}A^{1/2}\right) \|u-v\|_{3/2}.\]
And we have
\[ \|\Psi_2(v)\|_3 \leq C \|\cR\|^3 A^2, \]
and
\[\|\Psi_1(v)\|_3 = \|M_{|v|}(\Psi_2(v))\|_{3} \leq A^{-3/2} \| \Psi_2(v)\|_3 \leq A^{1/2}\|\cR\|^3, \] 
so that finally, for such $u,v$,
\[ \| \Psi(u) - \Psi(v) \|_{3/2} \leq C A^{1/2}\|\cR\|^3A^{-1/2} \|\cR\|^3 \|u-v\|_{3/2} = C \|\cR\|^6 \|u-v\|_{3/2}.\]
\end{proof}
Thus the problem \eqref{eq:main_ode} indeed has a unique solution 
for all times $t \geq 0$, and moreover the solution increases pointwise if $\psi_0 \geq 0$.
\medskip

Since equation \eqref{eq:EL} is autonomous, it satisfies the semigroup property. Indeed, 
let $\mathcal{F}(f)=Af^2-\lambda(f) f$ and let $T(t)f_0$ denote the solution of $\frac{df}{dt}=\mathcal{F}(f)$ at time $t$ with initial data $f_0$. We know that $T(t)f_0$ is well-defined for all $t\geq 0$. We will show that for every $s,t\geq 0$, we have
$$T(t+s)f_0=T(t)T(s)f_0.$$
For  $\tau\leq t$, let $u(\tau)=T(\tau)f_0$, $w(\tau)=u(\tau+s)$ and $v(\tau)=T(\tau)u(s)$. Since $u,v$ are solutions to $\frac{df}{dt}=\mathcal{F}(f)$, we have
\begin{gather*}
    \frac{d w}{d\tau}(\tau)=\frac{d u}{d\tau}(\tau+s)=\mathcal{F}(u(\tau +s))=\mathcal{F}(w(\tau)),
\end{gather*}
and 
\begin{gather*}
    \frac{d v}{d\tau}(\tau)=\mathcal{F}(v(\tau)).
\end{gather*}
Moreover, we have $w(0)=u(s)=v(0)$. By uniqueness, $w\equiv v$.

\section{Monotonicity Properties of the Radon Flow}\label{section:monotonicity}
\subsection{Monotonicity Properties}
In this section, $f\in C^1\left(\R_+, L^{3/2}(\R^2)\right)$ will denote the solution of equation \eqref{eq:EL} with non-negative initial data $f_0\in L^{3/2}(\R^2)$, the existence of which we have established in Section \ref{section:well-posedness}. Moreover, we have established that $f(x,t)\geq 0$ for all $(x,t)\in\R^2\times \R_+$. The following is the key monotonicity property of \eqref{eq:EL}.
\begin{proposition}\label{prop:monotone}
   We have $$\frac{d}{dt}\Phi (f(t))^3\geq 0, \mbox{ for all } t>0,$$
\end{proposition}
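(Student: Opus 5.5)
Since Lemma \ref{le:norm invariance} shows that $\|f(t)\|_{3/2}=\|f_0\|_{3/2}$ is constant along the flow, it suffices to show that $\frac{d}{dt}\|\cR f(t)\|_3^3\geq 0$. Because $f\in C^1(\R_+,L^{3/2}(\R^2))$ and $f\mapsto \|\cR f\|_3^3$ is Fr\'echet differentiable on $L^{3/2}$ (it is the composition of the bounded linear map $\cR$ with the $C^1$ map $g\mapsto\int|g|^3$ on $L^3$), the chain rule gives
\[
\frac{d}{dt}\int (\cR f)^3 = 3\int (\cR f)^2\,\cR(\partial_t f) = 3\int \cR^*\big((\cR f)^2\big)\,\partial_t f = 3\int Af\,\partial_t f .
\]
Here $f\geq0$ gives $\cR f\geq 0$, so $|\cR f|\cR f=(\cR f)^2$. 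The duality pairing is justified by $(\cR f)^2\in L^{3/2}$ and $\partial_t f\in L^{3/2}$, combined with \eqref{eq:A bound}.

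Next I substitute the equation \eqref{eq:EL}:
\[
\frac{1}{3}\frac{d}{dt}\int(\cR f)^3=\int (Af)^3-\lambda(f)\int f\,Af .
\]
Note that $\int f\,Af=\int (\cR f)^3=\|\cR f\|_3^3$. Also, since $f\geq0$, \eqref{eq:lambda} reads $\lambda(f)=\int f^{1/2}(Af)^2\big/\int f^{3/2}$. The claim is therefore equivalent to
\[
\int f^{1/2}(Af)^2\cdot\int fAf\;\leq\;\int (Af)^3\cdot\int f^{3/2}.
\]

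This is a pure H\"older-type inequality for the non-negative functions $f$ and $g:=Af\geq0$. I would prove it by writing $f^{1/2}g^2=(f^{3/2})^{1/3}(g^3)^{2/3}$ and $fg=(f^{3/2})^{2/3}(g^3)^{1/3}$. H\"older with exponents $(3,3/2)$ then gives, with $X=\int f^{3/2}$ and $Y=\int g^3$,
\[
\int f^{1/2}g^2\leq X^{1/3}Y^{2/3},\qquad \int fg\leq X^{2/3}Y^{1/3}.
\]
Multiplying the two bounds yields exactly $XY$, which is the required inequality. Both $X$ and $Y$ are finite, since $f\in L^{3/2}$ and $Af\in L^3$ by \eqref{eq:A bound}, and $X>0$ by Remark \ref{remark:non-neg}.

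The only step needing genuine care is the rigorous differentiation. I must check that $t\mapsto\|\cR f(t)\|_3^3$ is differentiable with derivative given by the pairing above, and that $\Phi^3=\|\cR f\|_3^3/\|f_0\|_{3/2}^3$ is differentiable. Both follow from the $C^1$ regularity provided by Theorem \ref{thm:Brezis} together with Proposition \ref{prop:Psi Lip} and the transformation $f=\psi\|f_0\|_{3/2}/\|\psi\|_{3/2}$. The inequality itself is routine once the two H\"older bounds are paired correctly.
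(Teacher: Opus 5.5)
Your proof is correct, but it gets the sign of the derivative by a different mechanism than the paper.

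Both arguments start from the same formula $\frac{d}{dt}\Phi(f(t))^3=3\|f_0\|_{3/2}^{-3}\int Af\,\partial_t f$. From there the paper never uses the explicit formula for $\lambda$. It observes that, since $f,Af,\lambda\ge0$, the quantity $\partial_t f=(Af)^2-\lambda f$ has the same sign pointwise as $Af-\lambda^{1/2}f^{1/2}$. Hence $\int Af\,\partial_t f\ge\lambda^{1/2}\int f^{1/2}\partial_t f$, and the right-hand side vanishes by conservation of $\|f(t)\|_{3/2}$.

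You instead substitute \eqref{eq:EL} together with the explicit $\lambda$. This reduces everything to $\int f^{1/2}g^2\int fg\le\int f^{3/2}\int g^3$ with $g=Af$. That inequality is log-convexity of $\theta\mapsto\int f^{\frac32(1-\theta)}g^{3\theta}$, comparing $\theta=\tfrac13,\tfrac23$ with $\theta=0,1$, and your two H\"older bounds verify it correctly.

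What your route buys:
\begin{itemize}
\item It is self-contained and shows that the monotonicity is nothing more than H\"older.
\item Your justification of the Fr\'echet differentiability is more careful than anything the paper provides.
\end{itemize}

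What the paper's route buys:
\begin{itemize}
\item It exhibits the derivative as the integral of a pointwise non-negative quantity, $(Af-\lambda^{1/2}f^{1/2})\big((Af)^2-\lambda f\big)=(Af-\lambda^{1/2}f^{1/2})^2(Af+\lambda^{1/2}f^{1/2})$.
\item This structure is reused in Proposition \ref{prop:extr_integrability f} to get the dissipation bound $\int|Af-\lambda^{1/2}f^{1/2}|^3\le\frac13\frac{d}{dt}\Phi(f(t))^3$ (with $\|f_0\|_{3/2}=1$).
\end{itemize}

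Your global H\"older argument yields only the sign. To recover that later estimate you would still need to rewrite $\int(Af)^3-\lambda\int fAf$ in the paper's form, using the identity $\int f^{1/2}(Af)^2=\lambda\int f^{3/2}$.
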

\begin{proof}  
Since $\|f(t)\|_{L^{3/2}(\R^2)}$ is constant by Lemma \ref{le:norm invariance}, we have
\begin{align}\label{eq:Phi derivative}
    \frac{d}{dt}\Phi(f(t))^3&=\frac{d}{dt}\frac{\|\cR f(t)\|_3^3}{\|f(t)\|_{3/2}^3}=3\|f_0\|_{3/2}^{-3}\int(\cR f(t))^2\partial_t \cR f(t) \\
    &=3\|f_0\|_{3/2}^{-3}\int (\cR f)^2 \cR(\partial_t f)=3\|f_0\|_{3/2}^{-3}\int \cR^*[(\cR f)^2]\partial_t f\nonumber\\ &=3\|f_0\|_{3/2}^{-3}\int A(f)\partial_t f \nonumber,
\end{align}
By \eqref{eq:EL} and since $f(t), Af(t)$ and $\lambda(f(t))$ are non-negative, we have
\begin{gather*}
     \partial_tf(t)\leq0\iff A(f(t))\leq\lambda(f(t))^{1/2}f(t)^{1/2}.
\end{gather*}
In other words, $\partial_t f(t)$ and $Af(t)-\lambda(f(t))^{1/2}f(t)^{1/2}$ have the same sign pointwise. Hence, 
\begin{gather*}
    \partial_t f(t) A(f(t))\geq \partial_t f(t) \lambda(f(t))^{1/2}f(t)^{1/2},
\end{gather*}
and therefore
\begin{align*}
    \frac{d}{dt}\Phi(f(t))^3&\geq 3\|f_0\|_{3/2}^{-3}\int \lambda(f)^{1/2}f^{1/2} \partial_t f\\
    &=2\|f_0\|_{3/2}^{-3}\lambda(f)^{1/2}\frac{3}{2}\int f^{1/2}\partial_t f=2\|f_0\|_{3/2}^{-3}\lambda(f)^{1/2}\frac{d}{dt}\|f(t)\|_{3/2}^{3/2}=0.
\end{align*}
\end{proof}
Proposition \ref{prop:monotone} immediately yields the following.
\begin{corollary}
    $\Phi(f(t))$ is non-decreasing.
\end{corollary}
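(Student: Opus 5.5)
The corollary follows by integrating Proposition \ref{prop:monotone} in time. The work lies in checking that $t\mapsto \Phi(f(t))^3$ is differentiable on $(0,\infty)$ and continuous on $[0,\infty)$, so that a nonnegative derivative gives monotonicity. Then we take cube roots, using that $\Phi\geq 0$.

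\textbf{Regularity of $\Phi(f(t))^3$.} By Section \ref{section:well-posedness}, $f\in C^1([0,\infty),L^{3/2}(\R^2))$. Since $\cR:L^{3/2}\to L^3$ is bounded and linear, $t\mapsto \cR f(t)$ lies in $C^1([0,\infty),L^3)$, with derivative $\cR(\partial_t f)$.

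The map $g\mapsto \int |g|^3$ is Fr\'echet differentiable on $L^3$, with derivative $h\mapsto 3\int |g|g\,h$. Indeed, $|g|g\in L^{3/2}$ by H\"older, and the remainder is controlled by $(\|g\|_3+\|h\|_3)\|h\|_3^2$. The chain rule then gives that $\|\cR f(t)\|_3^3$ is $C^1$, with derivative $3\int (\cR f)^2\cR(\partial_t f)$. Here $\cR f\geq 0$ because $f\geq 0$.

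By Lemma \ref{le:norm invariance}, the denominator $\|f(t)\|_{3/2}^3=\|f_0\|_{3/2}^3>0$ is a constant, nonzero when $f_0\neq0$. Hence $\Phi(f(t))^3$ is $C^1$ on $[0,\infty)$, and the computation \eqref{eq:Phi derivative} in the proof of Proposition \ref{prop:monotone} is justified. The duality step $\int(\cR f)^2\cR(\partial_tf)=\int \cR^*[(\cR f)^2]\,\partial_t f$ is legitimate because $(\cR f)^2\in L^{3/2}$ and $\partial_t f\in L^{3/2}$, with $\cR^*(\cR f)^2=Af\in L^3$.

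\textbf{Conclusion.} The function $\Phi(f(t))^3$ is continuous on $[0,\infty)$, differentiable on $(0,\infty)$, and has nonnegative derivative there by Proposition \ref{prop:monotone}. By the mean value theorem, for $0\leq s<t$ we have $\Phi(f(t))^3-\Phi(f(s))^3\geq 0$. Since $x\mapsto x^{1/3}$ is increasing on $[0,\infty)$, $\Phi(f(t))$ is non-decreasing.

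The only step requiring any care is the differentiability of the numerator, handled above. The trivial case $f_0=0$, where $\Phi$ is undefined, is excluded.
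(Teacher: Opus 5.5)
Your proposal is correct and follows the paper's route: the paper deduces the corollary immediately from Proposition \ref{prop:monotone}, using $\frac{d}{dt}\Phi(f(t))^3\geq 0$ and the monotonicity of the cube root. Your extra verification that $t\mapsto\|\cR f(t)\|_3^3$ is $C^1$, via Fr\'echet differentiability of $g\mapsto\int|g|^3$ on $L^3$ and the duality step $\int(\cR f)^2\,\cR(\partial_t f)=\int Af\,\partial_t f$, makes rigorous the differentiation that the paper carries out without comment in \eqref{eq:Phi derivative}.
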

Moreover, the following is true:

\begin{proposition}\label{prop:lambda monotone}
    The function $\lambda(f(t))$ is differentiable and
    \begin{gather*}
        \frac{d}{dt}\lambda(f(t))\geq 0\ \mbox{ for all }\ t\geq 0.
    \end{gather*}
    In particular, we have
    \begin{gather*}
        \lambda(f(t))\geq\lambda(f_0)=\frac{\int f_0^{1/2}A(f_0)^2}{\int f_0^{3/2}}.
    \end{gather*}
\end{proposition}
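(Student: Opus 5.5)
The plan is to work in the $\psi$-variables of Section~\ref{section:well-posedness}, where every quantity is non-negative and $\psi$ increases pointwise. I then reduce $\lambda'\geq 0$ to an inequality for a positive trilinear form built from $\cR$. The key step, the trilinear inequality in the third paragraph, is not yet established.

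\textbf{Rewriting $\lambda$ in terms of $\psi$.} Write $\psi(t)=e^{\Lambda(t)}f(t)$, $P(t):=\int\psi(t)^{3/2}$ and $c_0:=\int f_0^{3/2}$. Since $A$ is homogeneous of degree $2$ and $f=\psi\,(c_0/P)^{2/3}$, a direct substitution gives
\[
\lambda(f(t))=\frac{\int f^{1/2}(Af)^2}{c_0}=c_0^{2}\,\frac{N(t)}{P(t)^{3}},\qquad N(t):=\int \psi^{1/2}(A\psi)^2 .
\]
Both $N$ and $P$ are differentiable. This follows from $\psi\in C^1(\R_+,L^{3/2})$, the local Lipschitz bounds of Proposition~\ref{prop:Psi Lip}, and the Hölder estimate \eqref{eq:lambda upper bound}. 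The only delicate factor is $\psi^{1/2}$. I would handle it by first noting that $\psi(t)\geq f_0$ is pointwise non-decreasing, and then differentiating the difference quotients $\int(\psi(t+h)^{1/2}-\psi(t)^{1/2})(A\psi)^2/h$ by monotone convergence. So $\lambda$ is differentiable, and $\lambda'\ge0$ is equivalent to
\[
\frac{N'}{N}\;\geq\;3\frac{P'}{P}.
\]
By \eqref{eq:modEL} we have $\psi_t=c_0^2P^{-2}(A\psi)^2$. Hence $P'=\tfrac32 c_0^2P^{-2}N$, and the target becomes $P\,N'\geq \tfrac92 c_0^2 P^{-2}N^2$.

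\textbf{Computing $N'$.} Use $A'(\psi)[h]=2\cR^*(\cR\psi\,\cR h)$ and the trilinear form $T(a,b):=\int \cR\psi\,\cR a\,\cR b$. The form $T$ is non-negative on non-negative $a,b$, and $T(a,a)\ge 0$ for every $a$. Then
\[
N'=\tfrac12\int\psi^{-1/2}(A\psi)^2\psi_t+4\,T\big(\psi^{1/2}A\psi,\ \psi_t\big),
\]
and both terms are non-negative. For the first term I substitute $\psi_t$ and apply Cauchy--Schwarz in the form $\int\psi^{-1/2}(A\psi)^4\cdot\int\psi^{3/2}\geq N^2$. This yields exactly $\tfrac12 c_0^2P^{-2}N^2$, i.e.\ one ninth of what is required. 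The remaining $4c_0^2P^{-2}N^2$ must therefore come from the second term, which reduces to the inequality
\[
P\;T\big(\psi^{1/2}A\psi,\,(A\psi)^2\big)\;\geq\;N^2 .
\]

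\textbf{The main obstacle: the trilinear inequality.} The natural identities available are $T(\psi,b)=\int A\psi\, b$, which gives $T(\psi,\psi^{1/2}A\psi)=N$, and the Euler relation $A'(\psi)\psi=2A\psi$. What I would try first is a weighted Cauchy--Schwarz for the positive semidefinite form $T$, comparing $a=\psi^{1/2}A\psi$ and $b=(A\psi)^2$ against the reference direction $\psi$. The difficulty is that plain Cauchy--Schwarz for $T$ runs the wrong way. An alternative is to exploit the pointwise relation $b=a^2/\psi$ together with Hölder in the form $\int\psi^{1/2}(A\psi)^2\le(\int\psi^{3/2})^{1/3}(\int (A\psi)^3)^{2/3}$, to trade $P$ against $\int(A\psi)^3=T(\psi,(A\psi)^2)$. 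If neither closes the inequality, I would return to the $f$-variables. There I would use $\partial_t f=(Af)^2-\lambda f$ and the sign-coupling argument from Proposition~\ref{prop:monotone}: $\partial_tf$ has the sign of $f^{1/2}Af-\lambda^{1/2}f$. The aim would be to show that the cross term $4T(f^{1/2}Af,\partial_tf)$ dominates $4\lambda^{1/2}T(f,\partial_tf)=2\lambda^{1/2}\frac{d}{dt}\int(\cR f)^3\ge0$, which is non-negative by Proposition~\ref{prop:monotone}. Once $\lambda'\geq0$ is established, the final claim $\lambda(f(t))\ge\lambda(f_0)$ follows by integrating in $t$.
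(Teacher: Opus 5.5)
Your reduction is correct, and it ends at exactly the inequality the paper has to prove. The paper works with $J(t)=\int f^{1/2}(Af)^2$ directly in the $f$-variables. It disposes of the $f^{-1/2}$ term by the sign-coupling argument, where you use Cauchy--Schwarz (which in the $\psi$-variables supplies one ninth of the budget). It is then left with $\int f^{3/2}\cdot\int B(f,f^{1/2}Af)\,(Af)^2\geq J^2$, which by homogeneity is your $P\,T(\psi^{1/2}A\psi,(A\psi)^2)\geq N^2$.

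That inequality is the entire content of the proposition, and the proposal does not prove it; none of your three strategies delivers it.
Cauchy--Schwarz for the positive form $T$ in its two slots only bounds $T(a,b)$ from above, as you noticed.
The H\"older alternative trades $P$ against $\int(A\psi)^3=T(\psi,(A\psi)^2)$. You would then still need a lower bound for $T(\psi^{1/2}A\psi,(A\psi)^2)$ in terms of that quantity, which is the original difficulty in another form.
The fallback claim $T(f^{1/2}Af,\partial_tf)\geq\lambda^{1/2}T(f,\partial_tf)$ is a new unproven statement. The difference is $\int B(f,f^{1/2}Af-\lambda^{1/2}f)\,\partial_tf$, and since $B(f,\cdot)$ is nonlocal, the pointwise sign agreement of $\partial_tf$ with $f^{1/2}Af-\lambda^{1/2}f$ says nothing about its sign.

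There are also two smaller points. First, $T(f,\partial_tf)=\frac13\frac{d}{dt}\int(\cR f)^3$, so the constant there is $\frac43$, not $2$. Second, monotone convergence does not apply to the difference quotients of $\psi^{1/2}$, which need not be monotone in $h$. Fatou's lemma, which bounds the lower right Dini derivative of $N$ from below, is what is available, and it suffices for monotonicity of $\lambda$.

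The missing idea is to pass to $x$-space and use Cauchy--Schwarz twice, the second time inside the line integrals. With $B(\psi,g):=\cR^*(\cR\psi\,\cR g)$ one has $T(a,b)=\int a\,B(\psi,b)=\int B(\psi,a)\,b$ and $A\psi=B(\psi,\psi)$, so
\[
N=\int\psi^{1/2}A\psi\,B(\psi,\psi)=\int\psi\,B\big(\psi,\psi^{1/2}A\psi\big)\leq P^{1/2}\Big(\int\psi^{1/2}\,B\big(\psi,\psi^{1/2}A\psi\big)^2\Big)^{1/2}.
\]
Now apply the pointwise inequality $B(\psi,gh)^2\leq B(\psi,g^2)\,B(\psi,h^2)$ for $g,h\geq0$, which is Lemma \ref{lemma:Cauchy-Schwarz}. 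It comes from Cauchy--Schwarz along each line, $\cR(gh)^2\leq\cR(g^2)\,\cR(h^2)$, followed by Cauchy--Schwarz in the angle with weight $\cR\psi$. Taking $g=\psi^{1/2}$ and $h=A\psi$ gives $B(\psi,\psi^{1/2}A\psi)^2\leq A\psi\cdot B(\psi,(A\psi)^2)$. Hence
\[
N^2\leq P\int\psi^{1/2}A\psi\;B\big(\psi,(A\psi)^2\big)=P\,T\big(\psi^{1/2}A\psi,(A\psi)^2\big),
\]
which is what you needed. Your remark that $(A\psi)^2=(\psi^{1/2}A\psi)^2/\psi$ is exactly the structure being used. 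The point is to exploit it through the pointwise Cauchy--Schwarz for $B(\psi,\cdot)$ combined with the $x$-integral Cauchy--Schwarz against $\psi^{3/2}$. Exploiting it through H\"older on $\bbS^1\times\R$, or in the slots of $T$, does not work.
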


To prove Proposition \ref{prop:lambda monotone}, we first need the following lemma.

\begin{lemma}\label{lemma:Cauchy-Schwarz}
    Define the symmetric bilinear form
    \begin{gather*}
        B(f,g):= \cR^*\left[\cR f\cdot \cR g\right].
    \end{gather*}
    Then $B$ satisfies the Cauchy--Schwarz inequality:
    \begin{equation*}
        |B(f,g)(x)|^2\leq B(f,f)(x)B(g,g)(x)=Af(x)Ag(x),
    \end{equation*}
    for non-negative $f,g$. Moreover, for every $f\geq0$, the linear operator $B(f,\cdot)$ is self-adjoint and satisfies the pointwise Cauchy--Schwarz inequality:
    \begin{gather*}
        B(f,gh)(x)^2\leq B(f,g^2)(x)B(f,h^2)(x),
    \end{gather*}
    for all $g,h\geq 0$ and almost every $x$.
\end{lemma}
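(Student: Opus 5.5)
The plan is to write $B$ as an integral of a positive-semidefinite bilinear form. Unwinding the definitions gives, for almost every $x$,
\[
B(f,g)(x)=\int_{\bbS^1}\cR f(\alpha,x\cdot\alpha)\,\cR g(\alpha,x\cdot\alpha)\,d\alpha .
\]
So for fixed $x$, the map $(f,g)\mapsto B(f,g)(x)$ is the $L^2(\bbS^1,d\alpha)$ inner product of the two functions $\alpha\mapsto\cR f(\alpha,x\cdot\alpha)$ and $\alpha\mapsto\cR g(\alpha,x\cdot\alpha)$.

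\textbf{First inequality.} With this representation, the bound $|B(f,g)(x)|^2\le B(f,f)(x)B(g,g)(x)$ is just the Cauchy--Schwarz inequality in $L^2(d\alpha)$. The identification $B(f,f)=\cR^*[(\cR f)^2]=Af$ holds for $f\ge0$, since then $\cR f\ge 0$ and $|\cR f|\cR f=(\cR f)^2$. The only technical point is that these fibre integrals are finite for almost every $x$. This follows because $\cR f\in L^3$ implies $(\cR f)^2\in L^{3/2}$, and $\cR^*$ maps $L^{3/2}\to L^3$. Hence $Af(x)<\infty$ a.e., and the fibre functions lie in $L^2(d\alpha)$ a.e.

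\textbf{Self-adjointness.} By Fubini,
\[
\int B(f,g)\,h=\int (\cR f)(\cR g)(\cR h)=\int g\,B(f,h),
\]
and the right-hand expression is symmetric in $g$ and $h$. The integrals converge absolutely for $f,g,h\in L^{3/2}$ by H\"older's inequality with exponents $3,3,3$ on $\bbS^1\times\R$.

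\textbf{Second inequality.} For fixed $f\ge0$ and almost every $x$, I would define a positive measure $\nu_x$ on $\R^2$ by
\[
\int \phi\,d\nu_x:=B(f,\phi)(x)=\int_{\bbS^1}\cR f(\alpha,x\cdot\alpha)\int_{y\cdot\alpha=x\cdot\alpha}\phi(y)\,d\sigma(y)\,d\alpha .
\]
This is nonnegative for $\phi\ge0$ because $\cR f\ge0$. Then $B(f,gh)(x)=\int gh\,d\nu_x$, and Cauchy--Schwarz in $L^2(\nu_x)$ gives
\[
\Bigl(\int gh\,d\nu_x\Bigr)^2\le \int g^2\,d\nu_x\int h^2\,d\nu_x=B(f,g^2)(x)\,B(f,h^2)(x).
\]

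\textbf{Main obstacle.} The difficulty is justifying that this disintegration is valid pointwise a.e. rather than only as an identity of $L^3$ functions. One needs a single full-measure set of $x$ on which the formula for $\nu_x$ holds simultaneously for $g^2$, $h^2$ and $gh$. I would handle this with Tonelli, since all integrands are nonnegative. For a.e. $x$ the iterated integrals defining $B(f,g^2)(x)$ and $B(f,h^2)(x)$ are well defined in $[0,\infty]$ and finite a.e. by the mapping properties above. This holds whenever $g^2,h^2\in L^{3/2}$, and otherwise both sides are simply allowed to equal $+\infty$. Also $gh\le(g^2+h^2)/2$, so $B(f,gh)(x)$ is dominated by finite quantities, and the inequality holds wherever the right-hand side is finite. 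Intersecting the countably many exceptional null sets gives the result for almost every $x$.
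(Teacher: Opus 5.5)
Your proposal is correct and follows the paper's argument. The first inequality is Cauchy--Schwarz on $\bbS^1$, and self-adjointness is Fubini. For the second inequality, your single Cauchy--Schwarz in $L^2(\nu_x)$ is the paper's two-step Cauchy--Schwarz packaged into one measure: the paper first bounds $\cR(gh)\le(\cR g^2)^{1/2}(\cR h^2)^{1/2}$ along each line, then applies Cauchy--Schwarz on $\bbS^1$ with weight $\cR f$. Your added Tonelli and a.e.-finiteness remarks are extra rigour the paper omits, not a different route.
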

\begin{proof}
    Let $f,g$ be non-negative functions and let $x\in\R^2$. By the triangle inequality for integrals and the standard Cauchy--Schwarz inequality, we have
    \begin{align*}
        |B(f,g)(x)|^2&\leq\left(\int_{\bbS^1} |\cR f(\omega\cdot x) \cR g(\omega\cdot x) |d\omega\right)^2\\
        &\leq\int_{\bbS^1}(\cR f)(\omega\cdot x)^2 d\omega\int_{\bbS^1}(\cR g)(\omega\cdot x)^2 d\omega\\
        &=\cR^*\left[(\cR f)^2\right](x)\cR^*\left[(\cR g)^2\right](x)=Af(x)Ag(x).
    \end{align*}
    Fix $f$. We have
    \begin{align*}
        \int g B(f,h)&=\int g \cR^*\left[\cR f\cdot \cR h\right]= \int \cR g \cdot \cR f \cdot \cR h \\
        &=\int \cR^*\left[\cR f\cdot \cR g\right] h=\int B(f,g) h.
    \end{align*}
    Furthermore, for fixed $x\in\R^2$ and any $g,h\geq0$ we have by standard Cauchy--Schwarz
    \begin{align*}
        B(f,gh)(x)^2&=\left[\cR^*[\cR f\cdot \cR(gh)](x)\right]^2\leq \left[\cR^*[\cR f\cdot(\cR\{g^2\})^{1/2}(\cR\{h^2\})^{1/2}](x)\right]^2\\
        &\leq \cR^*[\cR f \cdot \cR(g^2)](x)\cR^*[\cR f\cdot \cR(h^2)](x)=B(f,g^2)(x)B(f,h^2)(x).
    \end{align*}
\end{proof}

\begin{proof}[Proof of Proposition \ref{prop:lambda monotone}]
    Set $$J(t):=\int f(t)^{1/2}A(f(t))^2.$$
    Since $\int f(t)^{3/2}$ is constant by Lemma \ref{le:norm invariance}, we need only show that $J(t)$ is differentiable and
    $$\frac{d}{dt}J(t)\geq 0.$$
    Since $Af(t)=\cR^*\left([\cR f(t)]^2\right),$ it is evident that $Af(t)(x)$ is differentiable in $t$ for almost every $x\in\R^2$ and 
    \begin{align}\label{eq:A derivative}
    \partial_t A(f(t))&=\partial_t \cR^*[(\cR f(t))^2]=\cR^*[\partial_t(\cR f(t))^2]\nonumber\\
    &=2\cR^*[\cR f(t)\cdot\partial_t \cR f(t)]=2\cR^*[\cR f(t)\cdot \cR(\partial_tf(t))].
    \end{align}
    Hence, $J$ is differentiable and we have
    \begin{gather*}
        \frac{d}{dt} J(t) = \int\frac{1}{2}f(t)^{-1/2}\partial_t f(t)A(f(t))^2+\int f(t)^{1/2}\partial_t\left[A(f(t))^2\right]
    \end{gather*}
By \eqref{eq:EL}, we have
\begin{gather*}
     \partial_t f(t)\leq 0 \iff A(f(t))^2\leq\lambda(f(t))f(t),\\
     \partial_t f(t)\geq 0 \iff A(f(t))^2\geq\lambda(f(t))f(t),
\end{gather*}
so that in both cases we get
\begin{gather*}
    \partial_t f(t) A(f(t))^2\geq \partial_t f(t) \lambda(f(t))f(t).
\end{gather*}
For the first integral, we have
\begin{align*}
     \int\frac{1}{2}f(t)^{-1/2}\partial_t f(t)A(f(t))^2
     &\geq \frac{1}{2}\int\lambda(f(t))f(t)^{1/2}\partial_t f(t)\\
     &=\frac{\lambda(f(t))}{2}\int f(t)^{1/2}\partial_t f(t)\\
     &=\frac{\lambda(f(t))}{3}\frac{d}{dt}\int f(t)^{3/2}=0.
\end{align*}
For the second integral, we have
\begin{gather*}
    \int f(t)^{1/2}\partial_t\left[A(f(t))^2\right]=2\int f(t)^{1/2} A(f(t))\partial_t A(f(t)),
\end{gather*}
and thus by \eqref{eq:A derivative} and the self-adjoint property of $B$ we have
\begin{align*}
    \int f(t)^{1/2}\partial_t\left[A(f(t))^2\right]&= 4\int f(t)^{1/2} A(f(t)) \cR^*[\cR f(t)\cdot \cR(\partial_t f(t))]\\
    &=4\int f(t)^{1/2}A(f(t)) B(f(t),\partial_t f(t))\\
    &=4\int f(t)^{1/2}A(f(t))B(f(t),A(f(t))^2)\\
    &\qquad-4\lambda(f(t))\int f(t)^{1/2}A(f(t)) B(f(t),f(t))\\
    &=4\int B(f(t),f(t)^{1/2}A(f(t)))A(f(t))^2\\
    &\qquad-4\lambda(f(t))\int f(t)^{1/2}A(f(t))^2\\
    &=4\left[\int B(f(t),f(t)^{1/2}A(f(t)))A(f(t))^2-\frac{J(t)^2}{\int f(t)^{3/2}}\right].
\end{align*}
By the self-adjoint property of $B(f,\cdot)$ and Cauchy--Schwarz, we have 
\begin{align*}
    J(t)=\int f(t)^{1/2}Af(t)^2&=\int f(t)^{1/2}Af(t) B(f(t),f(t))\\
    &=\int f(t) B(f(t),f(t)^{1/2}Af(t))\\
    &\leq \left(\int f(t)^{3/2}\right)^{1/2}\left(\int f(t)^{1/2} B(f(t),f(t)^{1/2}Af(t))^2\right)^{1/2},
\end{align*}
and by Cauchy--Schwarz for $B(f(t),\cdot)$ and the nonnegativity of $f$, we have
\begin{align*}
    B(f(t),f(t)^{1/2}Af(t))(x)^2&\leq B(f(t),f(t))(x)B(f(t),Af(t)^2)(x)\\
    &=Af(t)(x)B(f(t),Af(t)^2)(x).
\end{align*}
Combining the last two inequalities, and using self-adjointness of $B$ again, we get
\begin{align*}
    \frac{J(t)^2}{\int f(t)^{3/2}}&\leq\int f(t)^{1/2}Af(t)B\left( f(t),Af(t)^2\right)\\
    &=\int B(f(t),f(t)^{1/2}Af(t))Af(t)^2, 
\end{align*}
and thus $$\int f(t)^{1/2}\partial_t\left[A(f(t))^2\right]\geq 0.$$
\end{proof}

\section{Convergence to Critical Points}\label{section:convergence}
In this section, we prove Theorem \ref{thm:A smoothing} and the remaining conclusions of Theorem \ref{thm:EL main}. We do so in three steps outlined below. In what follows, $f$ will be the solution of \eqref{eq:EL} with non-negative initial data $f_0\in L^{3/2}(\R^2)$, unless stated otherwise.
\begin{itemize}
    \item In Subsection \ref{subsec:dissipation}, we prove a dissipation property of \eqref{eq:EL}, which forces the solution of \eqref{eq:EL} to approach (possibly trivial at this point) solutions of the Euler--Lagrange equation.
    \item In Subsection \ref{subsec:sobolev}, we show that if the initial data of \eqref{eq:EL} is non-negative and radial, then the solution $f(t)$ of \eqref{eq:EL} converges subsequentially to a (possibly $0$ at this point) limit function $f_\infty$ in $L^{3/2}(a\leq|x|\leq b)$ for every $0<a<b$.
    \item In Subsection \ref{subsec:convergence}, we show that there exists a suitable rescaling of the solution of \eqref{eq:EL} such that the limit obtained in Subsection \ref{subsec:sobolev} is non-trivial.
\end{itemize}

\subsection{Dissipation Estimates}\label{subsec:dissipation}

\begin{proposition}\label{prop:extr_integrability f}
    We have
    $$\int_0^\infty\|\partial_t  f(t)\|_{3/2}^3 \, dt= \int_0^\infty \|Af(t)^2-\lambda(f(t))f(t)\|_{3/2}^3 \, dt<\infty$$ 
\end{proposition}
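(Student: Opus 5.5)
The plan is to extract a dissipation identity from the monotonicity computation in Proposition \ref{prop:monotone} and bound $\|\partial_t f\|_{3/2}^3$ pointwise in time by a constant times $\frac{d}{dt}\Phi(f(t))^3$. Since $\Phi(f(t))^3\leq\|\cR\|^3$ and $\Phi(f(t))^3$ is non-decreasing, integrating in $t$ will then give the claim.

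First, I write $a:=Af(t)\geq 0$ and $b:=\lambda(f(t))^{1/2}f(t)^{1/2}\geq 0$, suppressing $t$. By \eqref{eq:EL}, $\partial_t f=a^2-b^2=(a-b)(a+b)$. In the proof of Proposition \ref{prop:monotone} we have $\frac{d}{dt}\Phi(f)^3=3\|f_0\|_{3/2}^{-3}\int a\,\partial_t f$. Lemma \ref{le:norm invariance} gives $\int b\,\partial_t f=\frac{2}{3}\lambda^{1/2}\frac{d}{dt}\int f^{3/2}=0$. Subtracting, we get the exact identity
\[
\frac{d}{dt}\Phi(f(t))^3=3\|f_0\|_{3/2}^{-3}D(t),\qquad D(t):=\int (a-b)^2(a+b)\geq 0 .
\]
Integrating over $[0,T]$ and using $\Phi\leq\|\cR\|$ yields $\int_0^\infty D(t)\,dt\leq \frac13\|f_0\|_{3/2}^3\|\cR\|^3<\infty$.

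Next, I compare $\|\partial_t f\|_{3/2}^3$ with $D$. I split $|a-b|^{3/2}(a+b)^{3/2}=[(a-b)^2(a+b)]^{3/4}(a+b)^{3/4}$ and apply H\"older with exponents $4/3$ and $4$:
\[
\|\partial_t f\|_{3/2}^{3/2}=\int |a-b|^{3/2}(a+b)^{3/2}\leq D^{3/4}\,\|a+b\|_3^{3/4}.
\]
Hence $\|\partial_t f\|_{3/2}^3\leq D^{3/2}\|a+b\|_3^{3/2}$. To reduce the power $D^{3/2}$ to $D$, I use $D\leq\int(a+b)^3=\|a+b\|_3^3$, which follows from $(a-b)^2\leq(a+b)^2$. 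This gives $\|\partial_t f\|_{3/2}^3\leq D\,\|a+b\|_3^3$.

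The remaining point, which I expect to be the only step needing care, is a time-uniform bound on $\|a+b\|_3$. By \eqref{eq:A bound}, $\|a\|_3\leq\|\cR\|^3\|f(t)\|_{3/2}^2=\|\cR\|^3\|f_0\|_{3/2}^2$. We have $\|b\|_3^3=\lambda(f)^{3/2}\int f^{3/2}$, and by \eqref{eq:lambda upper bound} together with Lemma \ref{le:norm invariance}, $\lambda(f(t))\leq\|\cR\|^6\|f_0\|_{3/2}^3$. Both bounds depend only on $\|f_0\|_{3/2}$, by norm conservation. So $\|\partial_t f(t)\|_{3/2}^3\leq C(\|f_0\|_{3/2},\|\cR\|)\,D(t)$, and integrating in time together with $\int_0^\infty D<\infty$ completes the proof. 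The equality between the two integrals in the statement is just \eqref{eq:EL}.
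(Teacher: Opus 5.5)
Your proof is correct and follows essentially the paper's argument. You use the same dissipation identity $\frac13\|f_0\|_{3/2}^3\frac{d}{dt}\Phi(f(t))^3=\int (a-b)(a^2-b^2)$, the same inequality $\|\partial_t f\|_{3/2}^3\le \|a+b\|_3^3\, D$, and the same time-uniform bounds on $\|Af\|_3$ and $\lambda(f(t))$. The only cosmetic difference is how you reach that inequality: the paper uses Cauchy--Schwarz and then $\int|a-b|^3\le D$, while you use H\"older with exponents $4/3,4$ and then $D\le\|a+b\|_3^3$; both rest on the same pointwise fact $|a-b|\le a+b$.
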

\begin{proof}
    For simplicity, we may assume that $\|f_0\|_{3/2}=1$. From \eqref{eq:Phi derivative},\eqref{eq:EL} and since by Lemma \ref{le:norm invariance} we have
    \[\int f^{1/2}\partial_t f=\frac{2}{3}\frac{d}{dt}\|f(t)\|_{3/2}^{3/2}=0,\]
    we get the identity
    \begin{align*}
        \frac{1}{3}\frac{d}{dt}\Phi(f(t))^3&=\int Af \partial_t f-\lambda(f)^{1/2}\int f^{1/2}\partial_t f\\
        &=\int\left( Af-\lambda(f)^{1/2}f^{1/2}\right)\partial_t f\\
        &=\int\left( Af-\lambda(f)^{1/2}f^{1/2}\right)\left(Af^2-\lambda(f)f \right).
    \end{align*}
    We have
    \begin{gather*}
        \int \left| Af-\lambda(f)^{1/2}f^{1/2}\right|^3= \int \left| Af-\lambda(f)^{1/2}f^{1/2}\right| \cdot \left| Af-\lambda(f)^{1/2}f^{1/2}\right|^2,
    \end{gather*}
    and by applying the elementary inequality $|\sqrt{x}-\sqrt{y}|\leq\sqrt{|x-y|}$ (valid for $x, y \geq 0$), the above is
    \begin{gather*}
        \leq\int \left| Af-\lambda(f)^{1/2}f^{1/2}\right| \cdot \left| Af^2-\lambda(f)f\right|.
    \end{gather*}
    Since $Af,f\geq 0$, the expressions $Af^2-\lambda(f)f$ and $Af-\lambda(f)^{1/2}f^{1/2}$ have the same sign almost everywhere. Thus,
    \begin{gather*}
        \int \left| Af-\lambda(f)^{1/2}f^{1/2}\right|^3\leq\int\left( Af-\lambda(f)^{1/2}f^{1/2}\right)\left(Af^2-\lambda(f)f \right)=\frac{1}{3}\frac{d}{dt}\Phi(f(t))^3.
    \end{gather*}
    Finally, we have
    \begin{align*}
        \left\|Af^2-\lambda(f)f\right\|_{3/2}^3&=\left(\int \left|Af^2-\lambda(f)f\right|^{3/2}\right)^2\\
        &=\left(\int \left|Af+\lambda(f)^{1/2}f^{1/2}\right|^{3/2}\left|Af-\lambda(f)^{1/2}f^{1/2}\right|^{3/2}\right)^2\\
        &\leq\int \left|Af+\lambda(f)^{1/2}f^{1/2}\right|^{3}\int \left|Af-\lambda(f)^{1/2}f^{1/2}\right|^{3}\\
        &\leq 4\left(\|Af\|_3^3+\lambda(f)^{3/2}\|f\|_{3/2}^{3/2}\right) \int \left|Af-\lambda(f)^{1/2}f^{1/2}\right|^{3}.        
    \end{align*}
    It remains to show that the factor multiplying the integral in the last expression is bounded. This may be readily verified by \eqref{eq:normG} and \eqref{eq:lambda upper bound}.
\end{proof}

We now show that $\|\partial_t f(t)\|_{3/2}$ vanishes as $t \to \infty$.
\begin{proposition}\label{prop:extr_conv}
    We have
    \[\|Af(t)^2-\lambda(f(t))f(t)\|_{3/2}=\|\partial_t f(t)\|_{3/2}\to 0,\ \mbox{ as }\ t\to \infty.\]
\end{proposition}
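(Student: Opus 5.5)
The plan is to combine the integrability from Proposition \ref{prop:extr_integrability f} with a uniform-continuity (Lipschitz-in-time) bound for $g(t):=\|\partial_t f(t)\|_{3/2}$. This is the standard Barbalat-type argument. If $g^3\in L^1(0,\infty)$ and $g$ is uniformly Lipschitz (or uniformly continuous) on $[0,\infty)$, then $g(t)\to0$. Indeed, if $g(t_n)\geq\delta$ along $t_n\to\infty$, then $g\geq\delta/2$ on intervals $[t_n,t_n+\delta/(2L)]$, which may be taken disjoint. This contradicts $\int_0^\infty g^3<\infty$. So everything reduces to showing that $t\mapsto\mathcal{F}(f(t))=Af(t)^2-\lambda(f(t))f(t)$ is Lipschitz in $L^{3/2}$, uniformly in $t$.

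Normalise $\|f_0\|_{3/2}=1$, so that $\|f(t)\|_{3/2}=1$ for all $t$ by Lemma \ref{le:norm invariance}. First, $\|\partial_t f(t)\|_{3/2}$ is uniformly bounded. By \eqref{eq:A bound}, $\|Af^2\|_{3/2}=\|Af\|_3^2\leq\|\cR\|^6$. By \eqref{eq:lambda upper bound}, $0\leq\lambda(f)\leq\|\cR\|^6$. Hence $\|\partial_tf\|_{3/2}\leq 2\|\cR\|^6$, and $f$ is Lipschitz in time with constant $2\|\cR\|^6$.

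Next, I will show that $\mathcal{F}$ is Lipschitz on the unit sphere of nonnegative functions in $L^{3/2}$. The term $f\mapsto Af^2$ is handled by the same chain as in Proposition \ref{prop:Psi Lip}. We have $\|(Au)^2-(Av)^2\|_{3/2}\leq(\|Au\|_3+\|Av\|_3)\|Au-Av\|_3$, and $\|Au-Av\|_3\leq\|\cR\|^3(\|u\|_{3/2}+\|v\|_{3/2})\|u-v\|_{3/2}$. This gives a constant $C\|\cR\|^6$.

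The main obstacle is the multiplier $\lambda(f)=\int f^{1/2}(Af)^2$ (on the unit sphere), because $f\mapsto f^{1/2}$ is only Hölder and not Lipschitz. This is precisely the issue noted before \eqref{eq:modEL}. I see two ways around it.

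\emph{Option (a): the Hölder bound suffices.} We have $|u^{1/2}-v^{1/2}|\leq|u-v|^{1/2}$ and $\|\,|u-v|^{1/2}\|_{3}=\|u-v\|_{3/2}^{1/2}$. Hence $|\lambda(u)-\lambda(v)|\lesssim\|u-v\|_{3/2}^{1/2}+\|u-v\|_{3/2}$. This makes $t\mapsto\lambda(f(t))$ Hölder-$1/2$ uniformly in $t$. Consequently $g$ is uniformly continuous, which is all that the Barbalat argument needs.

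\emph{Option (b): avoid the square root entirely.} Proposition \ref{prop:lambda monotone} shows that $\lambda(f(t))$ is nondecreasing. It is also bounded by $\|\cR\|^6$. Therefore $\lambda(f(t))$ converges and is uniformly continuous on $[0,\infty)$.

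Either route gives uniform continuity of $t\mapsto\mathcal{F}(f(t))$ in $L^{3/2}$. Then $g$ is uniformly continuous, and the Barbalat argument above yields $g(t)\to0$.
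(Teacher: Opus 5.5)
Your argument is correct, but it is organised differently from the paper's.

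\textbf{The paper's route.} The paper sets $h(t)=\|\partial_t f(t)\|_{3/2}^{3/2}$ and differentiates $h^2$ in time. This requires the second derivative $\partial_t^2 f=\partial_t[Af^2]-\lambda' f-\lambda\,\partial_t f$ to exist in $L^{3/2}$. It then splits $(h^2)'=\xi+\nu$. The piece $\xi$ lies in $L^1(\R_+)$ because $\lambda$ is nondecreasing and bounded (Proposition \ref{prop:lambda monotone}), so $\lambda'\in L^1$. The piece $\nu$ lies in $L^\infty(\R_+)$ via $\|\partial_t Af\|_3\lesssim\|f\|_{3/2}\|\partial_t f\|_{3/2}$. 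The Barbalat variant, Lemma \ref{le:barbalat}, then finishes.

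\textbf{Your route.} You stay at first order. The trajectory is Lipschitz in $L^{3/2}$ since $\|\partial_t f\|_{3/2}\le 2\|\cR\|^6$. The map $u\mapsto (Au)^2$ is Lipschitz on the unit ball, and $\lambda$ is $\tfrac12$-H\"older on nonnegative functions of unit norm. Hence $t\mapsto \partial_t f(t)=\mathcal{F}(f(t))$ is itself uniformly continuous in $L^{3/2}$, which is stronger than continuity of its norm. Integrability of $\|\partial_t f\|_{3/2}^3$ from Proposition \ref{prop:extr_integrability f} then gives the conclusion by the elementary Barbalat argument.

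\textbf{Comparison.} Your option (a) needs neither the existence of $\partial_t^2 f$ nor Proposition \ref{prop:lambda monotone}, whose proof manipulates $f^{-1/2}$. It is therefore more elementary and more self-contained. It also shows that the square-root obstruction is harmless here, because only uniform continuity, not Lipschitz continuity, is required. Your option (b) is closer in spirit to the paper, which tames the same non-Lipschitz multiplier through the monotonicity of $\lambda$.

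\textbf{Two small points.} For (b) you also need continuity of $t\mapsto\lambda(f(t))$, since monotone and bounded alone does not give uniform continuity. Proposition \ref{prop:lambda monotone} supplies this through differentiability, or you can get it from your H\"older estimate. Also, you announce that $\mathcal{F}$ is Lipschitz on the unit sphere, but what you actually prove, and all you need, is uniform (H\"older) continuity there.
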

To prove this, we need the following simple variant of Barbalat's lemma, the proof of which may be found in Appendix \ref{ap:barbalat}.
\begin{lemma}\label{le:barbalat}
    Let $u\in L^1(\R_+)$ be differentiable and suppose $u'=\xi+\nu$, where $\xi\in L^1(\R_+)$ and $\nu\in L^\infty(\R_+)$. Then $u$ is uniformly continuous and 
    \[\lim_{t\to +\infty}u(t)=0.\]
\end{lemma}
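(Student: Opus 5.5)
The statement to prove is Lemma \ref{le:barbalat}. It is a variant of Barbalat's lemma: $u\in L^1(\R_+)$ and $u'=\xi+\nu$ with $\xi\in L^1$, $\nu\in L^\infty$. The plan is to prove uniform continuity directly from the decomposition of $u'$, and then deduce the limit from integrability by the classical Barbalat argument. I will assume, as is implicit in the application, that $u'$ is locally integrable. In the intended use $u$ is a $C^1$ function of $t$, or at least absolutely continuous, so that $u(t)-u(s)=\int_s^t u'$.

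\textbf{Uniform continuity.} For $0\le s<t$, the fundamental theorem of calculus gives
\[
|u(t)-u(s)|\le \int_s^t|\xi|+\|\nu\|_\infty (t-s).
\]
Since $\xi\in L^1(\R_+)$, the measure $|\xi|\,d\tau$ is absolutely continuous with respect to Lebesgue measure. Hence for every $\varepsilon>0$ there is $\delta_1>0$ such that $\int_I|\xi|<\varepsilon/2$ for every interval $I$ with $|I|<\delta_1$. Choosing $\delta=\min\{\delta_1,\varepsilon/(2\|\nu\|_\infty+1)\}$ gives $|u(t)-u(s)|<\varepsilon$ whenever $|t-s|<\delta$, uniformly in $s$. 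This is the only point where the structure of $u'$ is used.

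\textbf{Limit.} Suppose $u(t)\not\to0$. Then there are $\varepsilon>0$ and $t_n\to\infty$ with $t_{n+1}>t_n+1$ and $|u(t_n)|\ge\varepsilon$. Take the $\delta$ from uniform continuity for $\varepsilon/2$, and shrink it so that $\delta<1/2$.
\begin{itemize}
\item On each interval $[t_n,t_n+\delta]$ we have $|u|\ge\varepsilon/2$.
\item By continuity $u$ does not change sign there, so $\left|\int_{t_n}^{t_n+\delta}u\right|\ge\varepsilon\delta/2$.
\item These intervals are disjoint and march off to infinity.
\end{itemize}
Since $u\in L^1(\R_+)$, the Cauchy criterion says $\int_T^{T'}u\to0$ as $T,T'\to\infty$. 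This contradicts the lower bound $\varepsilon\delta/2$ on the intervals $[t_n,t_n+\delta]$. Alternatively, $\sum_n\int_{t_n}^{t_n+\delta}|u|=\infty$ contradicts $\int|u|<\infty$. Therefore $u(t)\to0$.

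The only delicate step is the uniform continuity. A bounded derivative alone would suffice there, and the $L^1$ part requires the uniform absolute continuity of the integral; the rest is the standard argument.
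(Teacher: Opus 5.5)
Your proof is correct and follows the same route as the paper. Uniform continuity comes from the absolute continuity of $\int|\xi|$ together with the Lipschitz bound from $\|\nu\|_\infty$. The limit then follows because an integrable, uniformly continuous function vanishes at infinity; the paper simply cites this fact, while you prove it directly via disjoint intervals. Your extra assumption is automatic: $u'=\xi+\nu$ is already locally integrable, and an everywhere-differentiable function with locally integrable derivative satisfies the fundamental theorem of calculus. Your choice $\delta\le\varepsilon/(2\|\nu\|_\infty+1)$ also avoids the division-by-zero slip in the paper's $\varepsilon/\|\nu\|_\infty$.
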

\begin{proof}[Proof of Proposition \ref{prop:extr_conv}]
        Let $h(t):=\|\partial_t f(t)\|_{3/2}^{3/2}$. We know from Proposition \ref{prop:extr_integrability f} that $h^2 \in L^1(\R_+)$. Moreover, we have
        \begin{gather*}
            h(t)=\|Af(t)^2-\lambda(f(t))f(t)\|_{3/2}^{3/2}\lesssim\|Af(t)\|_3^{3}+\lambda(f(t))^{3/2}\|f(t)\|_{3/2}^{3/2}\lesssim\|\cR\|^9\|f_0\|_{3/2}^6.
        \end{gather*}
        Thus, $h^2\in L^1(\R_+)\cap L^\infty(\R_+)$. In view of Lemma \ref{le:barbalat}, we need only show that $h^2$ is differentiable and $(h^2)'=\xi+\nu$ for some $\xi\in L^1(\R_+)$ and $\nu\in L^\infty(\R_+)$. By Proposition \ref{prop:lambda monotone}, $\lambda(t)$ and $Af(t)$ are differentiable in $t$. Therefore, $\partial_tf(t)$ is also differentiable in $t$ and we have
        \begin{equation}\label{eq:double derivative}
            \partial_t^2 f(t)=\partial_t[Af(t)^2]-\lambda'(t)f(t)-\lambda(t)\partial_t f(t).
        \end{equation}
        Since the function $s\mapsto |s|^p$ is $C^1$ on $\R$ for $p>1$, we have that $h$ is differentiable and
        \begin{gather*}
            h'(t)=\frac{d}{dt}\int |\partial_tf(t)|^{3/2}=\frac{3}{2}\int |\partial_tf(t)|^{1/2}{\rm sgn}(\partial_t f(t))\partial_t^2f(t).
        \end{gather*}  
        Since $(h^2)'(t)=2h(t)h'(t)$ and $h$ is bounded, it suffices to show that $h'=\xi+\nu$ for some $\xi\in L^1(\R_+)$ and $\nu\in L^\infty(\R_+)$.
        We substitute \eqref{eq:double derivative} in the expression for $h'$ to get
        \begin{align*}
            h'(t)=&3\int_{\R^2}|\partial_t f(t)|^{1/2}{\rm sgn}(\partial_t f(t))Af(t)\partial_tAf(t)\\
            -&\frac{3}{2}\lambda'(t)\int_{\R^2}|\partial_t f(t)|^{1/2}{\rm sgn}(\partial_t f(t))f(t)\\
            -&\frac{3}{2}\lambda(t)\int_{\R^2} |\partial_t f(t)|^{3/2}
        \end{align*}
        We now set
        \begin{equation}\label{eq:xi}
            \xi(t):=-\frac{3}{2}\lambda'(t)\int_{\R^2}|\partial_t f(t)|^{1/2}{\rm sgn}(\partial_t f(t))f(t),
        \end{equation}
        and 
        \begin{equation}\label{eq:nu}
            \nu(t):=3\int_{\R^2}|\partial_t f(t)|^{1/2}{\rm sgn}(\partial_t f(t))Af(t)\partial_tAf(t)-\frac{3}{2}\lambda(t)\int_{\R^2} |\partial_t f(t)|^{3/2}.
        \end{equation}
        To show that $\xi\in L^1(\R_+)$, we simply observe that
        \[\int_{\R^2}|\partial_t f(t)|^{1/2}{\rm sgn}(\partial_t f(t))f(t)\leq\|\partial_t f(t)\|_{3/2}^{1/2}\|f(t)\|_{3/2}, \]
        and the RHS is bounded. Moreover, it is evident that $\lambda'(t)$ is integrable.\\\\
        To show that $\nu\in L^\infty(\R_+)$, we need only show that the first term in \eqref{eq:nu} is bounded, as we have already shown that $\lambda(t)$ and $\|\partial_t f(t)\|_{3/2}$ are bounded. By H\"older's inequality, we have
        \begin{gather*}
            \int_{\R^2}|\partial_t f(t)|^{1/2}{\rm sgn}(\partial_t f(t))Af(t)\partial_tAf(t)
            \leq\|\partial_t f(t)\|_{3/2}^{1/2}\|Af(t)\|_3\|\partial_t Af(t)\|_3.
        \end{gather*}
        Of the three factors, only the last needs to be shown to be bounded. We have
        \begin{align*}
            \|\partial_t Af(t)\|_3&=2\|R^*[\cR f\cdot R\partial_t f]\|_3\lesssim\|\cR f\cdot R\partial_t f\|_{3/2}\\
            &\leq\|\cR f\|_3\|R\partial_t f\|_3\lesssim\|f(t)\|_{3/2}\|\partial_t f(t)\|_{3/2}.
        \end{align*}
\end{proof}

\medskip

\subsection{Sobolev Estimates, Fractional Integral Formulas and Precompactness of the Euler--Lagrange Flow}\label{subsec:sobolev}
Below in this subsection are several fractional-integral identities/estimates for $\cR$ and $\cR^*$. For a systematic treatment of these and more properties of the Radon transform, we refer the reader to Helgason's book \cite{Helgason}. We begin by proving two useful formulas. In this subsection, we only concern ourselves with non-negative radial functions. The first useful observation is that if $f$ non-negative and radial, then $Af$ is also radial and $\cR f(\alpha,s)$ is constant in the spherical variable $\alpha$. Moreover, we have
\begin{equation}
    \cR f(s)=2\int_s^\infty \frac{rf(r)}{\sqrt{r^2-s^2}}dr,\qquad Af(r)=4 \int_0^r  \frac{\cR f(s)^2}{\sqrt{r^2-s^2}}ds.
\end{equation}
The proof consists of two simple changes of variables. Regarding the first one, we observe that for any $\alpha\in \bbS^1,\, s\in\R$, each $x\in \R^2$ satisfying $x\cdot\alpha=s$ may be written as $x=s\alpha+t \alpha^\perp$ for a unique $t\in\R$, where $\alpha^\perp$ is the counter-clockwise rotation of $\alpha$ by $\pi/2$ radians. For each $t\in \R$, let $r=\sqrt{s^2+t^2}.$ Then, we have
\[\cR f(\alpha,s)=\int_{x\cdot\alpha=s}f(| x|)d\sigma(x)=2\int_{0}^\infty f\left(\sqrt{s^2+t^2}\right)dt=2\int_s^\infty\frac{rf(r)}{\sqrt{r^2-s^2}}dr.\]
For the second one, let $s=r\cos\theta$ and $g:=(\cR f)^2$. We have
\begin{align*}
    Af(x)=\cR^* g(x)=\int_{0}^{2\pi}g(r\cos \theta)\,d\theta=4 \int_0^{\pi/2}g(r\cos \theta)\,d\theta=4 \int_0^r \frac{g(s)}{\sqrt{r^2-s^2}}ds.
\end{align*}
This formula is extremely useful, as it will allow us to show that $A$ is smoothing of order $1/2$, which is essentially Theorem \ref{thm:A smoothing}.
\begin{proposition}\label{prop:A smoothing}
    Let $f$ be non-negative and radial, let $0<a<b$ and $\phi\in C_c^\infty(\R)$ be smooth cut-off function such that $\phi\equiv 1$ in $[a,b]$ and $\mathrm{supp}(\phi)\subseteq[a/2,2b]$. Then, $\phi\, Af\in H^{1/2,3/2}(\R)$ as a function of the radial variable $r$, and 
    \[\|\phi \,Af\|_{H^{1/2,3/2}(\R)}\lesssim_{a,b}\|f\|_{3/2}^2. \]
    Consequently, $\|Af\|_{H^{1/2,3/2}([a,b])}\lesssim_{a,b}\|f\|_{3/2}^2.$
\end{proposition}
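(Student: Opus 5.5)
The plan is to exploit the one-dimensional Abel-type formula
\[
Af(r)=4\int_0^r \frac{g(s)}{\sqrt{r^2-s^2}}\,ds,\qquad g:=(\cR f)^2,
\]
and to recognise it, away from $r=0$, as a fractional integral of order $1/2$ applied to $g$. First I record the available integrability of $g$. For radial $f$, $\cR f(\alpha,s)=\cR f(s)$ is independent of $\alpha$, so $\|\cR f\|_{L^3(\bbS^1\times\R)}^3=2\pi\int_\R|\cR f(s)|^3ds$. Hence \eqref{eq:Oberlin-Stein} gives $\|g\|_{L^{3/2}(\R)}\lesssim\|f\|_{3/2}^2$ as a function of the single variable $s$; since $g$ is even, I will only use $s\ge0$. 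Writing $\sqrt{r^2-s^2}=\sqrt{r-s}\sqrt{r+s}$, I split
\[
Af(r)=4\int_0^r \frac{g(s)}{\sqrt{r-s}}\,\frac{1}{\sqrt{r+s}}\,ds .
\]
I would split the $s$-integral with a smooth partition $1=\chi_1(s)+\chi_2(s)$. Here $\chi_1$ is supported in $[a/8,\infty)$, $\chi_1\equiv1$ on $[a/4,\infty)$, and $\chi_2$ is supported in $[0,a/4]$. This gives $Af=A_1+A_2$.

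\emph{Far piece $A_2$.} For $r\in\supp\phi\subseteq[a/2,2b]$ and $s\le a/4$, the kernel $(r^2-s^2)^{-1/2}$ is smooth in $r$ with all $r$-derivatives bounded by constants depending on $a,b$. Therefore $\phi A_2\in C_c^\infty$ with $\|\partial_r^k(\phi A_2)\|_\infty\lesssim_{a,b}\int_0^{a/4}g\lesssim_{a}\|g\|_{3/2}$ by H\"older. This piece is thus bounded in $H^{1,3/2}\subseteq H^{1/2,3/2}$ by $\|f\|_{3/2}^2$.

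\emph{Near piece $A_1$.} On $\supp\phi$ I write $\phi(r)A_1(r)=4\phi(r)\int_{-\infty}^r (r-s)^{-1/2}\,k(r,s)\,\chi_1(s)g(s)\mathbf 1_{s\le 2b}\,ds$, where $k(r,s)=(r+s)^{-1/2}$. Since $s\ge a/8$, $k$ is smooth with bounded derivatives on the relevant region. I would remove the smooth factor $k$ by a Taylor/Fourier expansion in $r$, or more simply by writing $k(r,s)=k(s,s)+(r-s)\tilde k(r,s)$. The main term is then $\phi\cdot \mathcal W^{1/2}(\tilde g)$, where $\tilde g:=k(s,s)\chi_1 g\mathbf 1_{[0,2b]}\in L^{3/2}(\R)$ with norm $\lesssim_{a,b}\|f\|_{3/2}^2$, and $\mathcal W^{1/2}h(r)=\int_{-\infty}^r (r-s)^{-1/2}h(s)\,ds$ is the one-sided Riemann--Liouville (Weyl) fractional integral. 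The remainder has kernel $(r-s)^{1/2}\tilde k$, which is Lipschitz-type and smoothing of order $3/2$; its contribution can be bounded in $H^{1,3/2}$ directly by differentiating once under the integral. For the main term, I use that $\mathcal W^{1/2}$ is a Fourier multiplier with symbol $c(2\pi i\xi)^{-1/2}$, i.e. $\mathcal W^{1/2}=c\,(\partial)^{-1/2}$. Then $(I-\partial^2)^{1/4}\phi\,\mathcal W^{1/2}\tilde g$ is controlled as follows. Compact support of $\phi$ handles the low frequencies: there $\phi\mathcal W^{1/2}\tilde g\in L^{3/2}$ trivially, since the kernel is locally integrable and $\tilde g$ has compact support. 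The high-frequency part uses that $\xi\mapsto (1+\xi^2)^{1/4}(i\xi)^{-1/2}(1-\eta(\xi))$ is a Mikhlin multiplier, which is bounded on $L^{3/2}(\R)$. Commuting $\phi$ past the multiplier costs a commutator of order $-1/2$, again bounded.

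The step I expect to be the main obstacle is this last localisation/commutator argument: showing that multiplying by $\phi$ and truncating $\tilde g$ does not destroy the $H^{1/2,3/2}$ bound, and that the Weyl integral's symbol is handled correctly near $\xi=0$. If the commutator route is awkward, I would instead try the Stein--Weiss/Hardy--Littlewood characterisation $H^{1/2,p}=I_{1/2}(L^p)$ in the inhomogeneous form, or characterise $H^{1/2,3/2}$ via the Littlewood--Paley square function and estimate dyadic pieces of the kernel $(r-s)_+^{-1/2}\phi(r)$ directly. The final statement then follows by the definition of the restricted space: $\phi Af$ is an extension of $Af|_{[a,b]}$, so $\|Af\|_{H^{1/2,3/2}([a,b])}\le\|\phi Af\|_{H^{1/2,3/2}(\R)}\lesssim_{a,b}\|f\|_{3/2}^2$.
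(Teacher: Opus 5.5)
Your proposal follows the paper's proof essentially step for step. It uses the same near/far splitting of $g=(\cR f)^2$ at $s\sim a/4$, the same expansion $(r+s)^{-1/2}=(2s)^{-1/2}+(r-s)\psi(r,s)$ with the remainder bounded in $H^{1,3/2}$ after one differentiation, and the same identification of the main term as the one-sided convolution $k*\tilde g$, $k(s)=s_+^{-1/2}$, whose half-derivative is $c(I+H)\tilde g$. The only difference is the localisation by $\phi$: the paper applies the Grafakos--Oh fractional Leibniz rule $\|(\partial_r)^{1/2}(\phi F)\|_{3/2}\lesssim\|(\partial_r)^{1/2}\phi\|_{2}\|F\|_{6}+\|\phi\|_\infty\|(\partial_r)^{1/2}F\|_{3/2}$ for $F=k*\tilde g$, together with the Hardy--Littlewood--Sobolev bound $\|F\|_6\lesssim\|\tilde g\|_{3/2}$. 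This sidesteps a point your commutator step must handle explicitly: $F$ lies in $L^6$ but not in $L^{3/2}$, since it decays like $r^{-1/2}$.
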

\begin{proof}
    Set $g:= (\cR f)^2.$ We decompose $g$ as follows:
    \[g(s)=g(s)\chi_{[0,a/4]}(s)+g(s)\chi_{[a/4,2b]}(s)=:g_1(s)+g_2(s), \]
    and for $r\in[a/2,2b]$,
    \begin{equation}
        Af(r)=\int_0^{a/4} \frac{g_1(s)}{\sqrt{r^2-s^2}}ds+\int_{a/4}^r \frac{g_2(s)}{\sqrt{r^2-s^2}}ds.
    \end{equation}
    Moreover, for $r\in[a,2b]$ and $s\in(a/4,r)$ we have
    \[\frac{1}{\sqrt{r+s}}=\frac{1}{\sqrt{2s}}+(r-s)\psi(r,s),\]
    where
    \[\psi(r,s):=\left(\frac{1}{\sqrt{r+s}}-\frac{1}{\sqrt{2s}}\right)\frac{1}{r-s}=\frac{-1}{\sqrt{r+s}\sqrt{2s}\left(\sqrt{r+s}+\sqrt{2s}\right)}.\]
    Thus, we have
    \begin{align}
        \int_{a/4}^r \frac{g_2(s)}{\sqrt{r^2-s^2}}ds=\int_{a/4}^r\frac{(2s)^{-1/2}g_2(s)}{\sqrt{r-s}}ds+\int_{a/4}^r g_2(s)\sqrt{r-s}\,\psi(r,s)\,ds.
    \end{align}
    We write the full decomposition
    \begin{align}\label{eq:A decomp}
        Af(r)&=\int_0^{a/4} \frac{g_1(s)}{\sqrt{r^2-s^2}}ds+\int_{a/4}^r\frac{(2s)^{-1/2}g_2(s)}{\sqrt{r-s}}ds+\int_{a/4}^r g_2(s)\sqrt{r-s}\,\psi(r,s)\,ds\\
        &=:A_1(r)+A_2(r)+A_3(r). \nonumber
    \end{align}
    We will show that each piece $\phi(r)A_j(r)\in H^{1/2,3/2}(\R)$ and
    \[\|\phi(r)A_j(r)\|_{H^{1/2,3/2}(\R)}\lesssim_{a,b}\|f\|_{3/2}^2. \]
    Let $r\in\mathrm{supp}(\phi)\subset[a/2,2b]$. For $s\in[0,a/4]$, we have $\sqrt{r^2-s^2}\geq\frac{\sqrt{3}a}{4}$, and thus by H\"older's inequality we have
    \begin{align*}
        A_1(r)=\int_0^{a/4} \frac{g_1(s)}{\sqrt{r^2-s^2}}ds&\leq\frac{4}{\sqrt{3}a}\int_0^{a/4}g_1(s)ds\\
        &\leq\frac{4}{\sqrt{3}a}\|g_1\|_{3/2}(a/4)^{1/3}\lesssim a^{-2/3}\|f\|_{3/2}^2.
    \end{align*}
    Hence $\phi(r)A_1(r)\in L^\infty(\R).$ Since $\mathrm{supp}(\phi(r)A_1(r))\subseteq[a/2,2b]$, we also have $\phi(r)A_1(r)\in L^{3/2}(\R)$ and 
    \[\|\phi(r)A_1(r)\|_{L^{3/2}(\R)}\lesssim_{a,b}\|f\|_{3/2}^2. \]
    Moreover, for $s\in[0,a/4]$, the derivative $\partial_r\left[(r^2-s^2)^{-1/2}\right]$ is bounded as a function of $r\in\mathrm{supp}(\phi)\subset[a/2,2b]$. Thus,
    \[A_1'(r)=\partial_r\left[\int_0^{a/4} \frac{g_1(s)}{\sqrt{r^2-s^2}}ds\right]\lesssim_a \int_0^{a/4} g_1(s)ds\lesssim_a \|f\|_{3/2}^2.\]
    By the product rule, we have
    \begin{align*}
        \partial_r\left(\phi(r)A_1(r)\right)=\phi'(r)A_1(r)+\phi(r)A_1'(r).
    \end{align*}
    Since $\phi',A_1\in L^\infty(\R)$ and $\mathrm{supp}(\phi')\subseteq[a/2,2b]$, we have $\phi'(r)A_1(r)\in L^{3/2}(\R)$. Similarly, we have $\phi(r)A_1'(r)\in L^{3/2}(\R)$.
    Therefore, $$\phi(r)A_1(r)\in H^{1,3/2}(\R),$$ with norm $\lesssim_{a,b} \|f\|_{3/2}^2$. By the continuous embedding $$H^{1,3/2}([a,b])\hookrightarrow H^{1/2,3/2}([a,b]),$$ we have $$\phi(r)A_1(r)\in H^{1/2,3/2}([a,b]),$$ with norm $\lesssim_{a,b} \|f\|_{3/2}^2 $ .\\\\
    Next, we deal with the third term of \eqref{eq:A decomp}. 
    Since $\psi$ is bounded in its domain, we have
    \begin{equation*}
        A_3(r)=\int_{a/4}^r g_2(s)\sqrt{r-s}\,\psi(r,s)\,ds\lesssim_{a,b}\|g_2\|_{L^1([a/4,b])} \lesssim_{a,b}\|g_2\|_{3/2}\lesssim\|f\|_{3/2}^2,
    \end{equation*}
    and thus $\phi(r) A_3(r)\in L^\infty(\R)$ and $\|\phi(r)A_3(r)\|_\infty\lesssim\|f\|_{3/2}^2$. Since $\mathrm{supp}(\phi(r)A_3(r))\subseteq[a/2,2b]$, as before we conclude that $\phi(r) A_3(r)\in L^{3/2}(\R)$ and
    \[\|\phi(r)A_3(r)\|_{L^{3/2}(\R)}\lesssim_{a,b}\|f\|_{3/2}^2. \]
    To compute the derivative of $\phi(r) A_3(r)$, we first write
    \[A_3(r)=\int_{a/4}^b g_2(s)\sqrt{\max{\{r-s,0}\}}\psi(r,s)\,ds, \]
    and then differentiate (in the weak sense) to get
    \begin{align*}
        A_3'(r)&=\partial_r \int_{a/4}^b g_2(s)\sqrt{\max{\{r-s,0\}}}\psi(r,s)\,ds\\
        &=\int_{a/4}^r \frac{g_2(s)}{2\sqrt{r-s}}\psi(r,s)\, ds+\int_{a/4}^r g_2(s)\sqrt{r-s}\,\partial_r \psi(r,s)\,ds\\
        &\lesssim_{a,b}\int_{a/4}^b \frac{g(s)}{\sqrt{|r-s|}}ds+\|g_2\|_{L^1([a,b])}\\
        &\lesssim_{a,b}\int_{a/4}^b \frac{g_2(s)}{\sqrt{|r-s|}}ds+\|g_2\|_{L^{3/2}(\R)}\\
        &\lesssim_{a,b}\int_{\R} \frac{g_2(s)}{\sqrt{|r-s|}}ds+\|f\|_{3/2}^2.
    \end{align*}
    Thus, by H\"older's inequality we have
    \begin{align*}
        \|\phi(r)A_3'(r)\|_{L^{3/2}(\R)}&\lesssim_{a,b}\left\|\phi(r)\int_{\R} \frac{g_2(s)}{\sqrt{|r-s|}}ds\right\|_{L^{3/2}(\R)}+\|\phi(r)\|_{L^{3/2}(\R)}\|f\|_{3/2}^2 \\
        &\lesssim\|\phi\|_{L^2(\R)} \left\|\int_{\R} \frac{g_2(s)}{\sqrt{|r-s|}}ds\right\|_{L^{6}(\R)}+\|f\|_{3/2}^2 \\
        &\lesssim\|g_2\|_{L^{3/2}(\R)}+\|f\|_{3/2}^2\lesssim_{a,b}\|f\|_{3/2}^2,
    \end{align*}
    where in the last inequality we used the Hardy--Littlewood--Sobolev inequality. Furthermore,
    \[\|\phi'(r)A_3(r)\|_{L^{3/2}(\R)}\leq\|A_3(r)\|_{L^\infty(\R)}\|\phi'\|_{L^{3/2}(\R)}. \]
    Hence
    \[\left(\phi(r)A_3(r)\right)'=\phi'(r)A_3(r)+\phi(r)A_3'(r)\in L^{3/2}([a,b]),\]
    and so $\phi(r)A_3(r)\in H^{1,3/2}(\R)$ with norm $\lesssim_{a,b}\|f\|_{3/2}^2$. As before, we get
    $$ \phi(r)A_3(r)\in H^{1/2,3/2}(\R),$$ with norm $\lesssim_{a,b} \|f\|_{3/2}^2$.\\\\
    Lastly, we bound the second term of \eqref{eq:A decomp}. By H\"older's inequality we have
    \begin{align*}
        \|\phi(r)A_2(r)\|_{L^{3/2}(\R)}&\leq\|\phi\|_{2}\left\|\int_{a/4}^r\frac{s^{-1/2}g_2(s)}{\sqrt{r-s}}ds\right\|_{L^{6}(\R)}
        \lesssim\left\|\int_{a/4}^b\frac{s^{-1/2}g_2(s)}{\sqrt{|r-s|}}ds\right\|_{L^{6}(\R)}  ,
    \end{align*}
    which by the Hardy-Littlewood-Sobolev inequality is
    \begin{align*}
        \lesssim\|s^{-1/2}g_2(s)\|_{L^{3/2}(\R)}=\|s^{-1/2}g_2(s)\|_{L^{3/2}([a/4,b])} \lesssim_{a,b}\|g_2\|_{3/2}\lesssim\|f\|_{3/2}^2,
    \end{align*}
    since $g_2$ is supported in $[a/4,b]$. Now let 
    \[k(s):=|s|^{-1/2}\chi_{(0,\infty)}(s), \qquad \tilde{g}(s):= s^{-1/2}g_2(s). \]
    By definition, $\tilde{g}$ is supported in $[a/4,b]$. Hence, we have
    \begin{align*}
       A_2(r)= \int_{a/4}^r\frac{s^{-1/2}g_2(s)}{\sqrt{r-s}}ds&=\int_{a/4}^b\tilde{g}(s)|r-s|^{-1/2}\chi_{(0,\infty)}(r-s)ds\\
        &=\int_{\R}\tilde{g}(s)k(r-s)ds=k*\tilde{g}\,(r).
    \end{align*}
    We also have for $\sigma\neq0$
    \begin{align*}
        \widehat{k}(\sigma)=\int_0^\infty s^{-1/2}e^{-2\pi i\sigma s}ds&=|\sigma|^{-1/2}\int_0^\infty (|\sigma| s)^{-1/2}e^{-2\pi i\sigma s}|\sigma|ds\\
        &=|\sigma|^{-1/2}\int_0^\infty u^{-1/2}e^{-\mathrm{sgn}(\sigma)2\pi i u}du,
    \end{align*}
    where the last integral evaluates to $\frac{\sqrt{2}}{2}-\frac{\sqrt{2}}{2}i\,\mathrm{sgn}(\sigma)$ . Hence,
    \begin{align*}
        \widehat{\left((\partial_r)^{1/2}\left[k*\tilde{g}\right]\right)}(\sigma)=(2\pi|\sigma|)^{1/2}\widehat{k}(\sigma)\widehat{\tilde{g}}(\sigma)=c\left[1-i\,\mathrm{sgn}(\sigma)\right]\widehat{\tilde{g}}(\sigma).
    \end{align*}
    By taking the inverse Fourier transform, we obtain
    \begin{align*}
        (\partial_r)^{1/2}k*\tilde{g}(r)=c(I+H)\tilde{g}(r),
    \end{align*}
    where $I$ is the identity operator and $\widehat{Hf}(\sigma):=-i\mathrm{sgn}(\sigma)\widehat{f}(\sigma)$ is the Hilbert transform. Since both these operators are bounded on $L^{3/2}(\R)$ and $\tilde{g}$ is supported on $[a/4,b]$, we have
    \begin{align*}
        \|(\partial_r)^{1/2}A_2(r)\|_{L^{3/2}(\R)}&=\left\| (\partial_r)^{1/2}\int_{a/4}^r\frac{s^{-1/2}g_2(s)}{\sqrt{r-s}}ds\right\|_{L^{3/2}(\R)}\\
        &=\|(\partial_r)^{1/2}k*\tilde{g}\|_{L^{3/2}(\R)}\\
        &\leq c\|(I+H)\tilde{g}\|_{L^{3/2}(\R)}\lesssim\|\tilde{g}\|_{L^{3/2}(\R)}\\
        &=\|s^{-1/2}g_2(s)\|_{L^{3/2}([a/4,b])}\lesssim_{a,b}\|g_2\|_{L^{3/2}([a/4,b])}\lesssim\|f\|_{3/2}^2.
    \end{align*}
    By Theorem \ref{thm:Leibniz} (fractional Leibniz rule), we have
    \begin{align}
        &\|(\partial_r)^{1/2}\left[\phi(r)A_2(r)\right]\|_{L^{3/2}(\R)}\nonumber\\
        &\qquad\leq C\left[\|(\partial_r)^{1/2}\phi\|_{L^2(\R)}\|A_2\|_{L^6(\R)}+\|\phi\|_{\infty}\|(\partial_r)^{1/2}A_2\|_{L^{3/2}(\R)}.\right]
    \end{align}
    To see why the right-hand side of the above is finite, we note that $$A_2(r)=\int_{a/4}^b\frac{\tilde{g}(s)}{\sqrt{|r-s|}}ds=\int_{\R}\frac{\tilde{g}(s)}{\sqrt{|r-s|}}ds,$$
    and thus by the Hardy-Littlewood-Sobolev inequality, we get
    $$\|A_2\|_{L^6(\R)}\lesssim\|\tilde{g}\|_{L^{3/2}(\R)}=\|s^{-1/2}g_2(s)\|_{L^{3/2}([a/4,b])}\lesssim_{a,b}\|g_2\|_{L^{3/2}([a/4,b])}\lesssim\|f\|_{3/2}^2. $$
    Thus, $$\phi(r)A_2(r)\in H^{1/2,3/2}(\R),$$ with norm $\lesssim_{a,b}\|f\|_{3/2}^2$.\\\\
    By definition of the $H^{1/2,3/2}([a,b])$-norm and since $\phi\equiv1$ in $[a,b]$, we have
    \[\|Af\|_{H^{1/2,3/2}([a,b])}=\|\phi\, Af\|_{H^{1/2,3/2}([a,b])}\leq \|\phi\, Af\|_{H^{1/2,3/2}(\R)}\lesssim_{a,b}\|f\|_{3/2}^2. \]
\end{proof}
The following classical compactness result justifies the usefulness of the smoothing estimate in Proposition \ref{prop:A smoothing}.
\begin{theorem}[Triebel \cite{Triebel}, Theorem 3.2.3]\label{thm:Triebel}
    Let $0<a<b$. The space $H^{1/2,3/2}([a,b])$ embeds compactly into $L^r([a,b])$ for all $1\leq r<6$.
\end{theorem}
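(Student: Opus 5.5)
The statement is a Rellich--Kondrachov type result for Bessel-potential spaces in one dimension. The plan is to prove precompactness in $L^{3/2}([a,b])$ by the Kolmogorov--Riesz (Fr\'echet--Kolmogorov) criterion, and then reach every exponent $r<6$ by interpolating against the critical Sobolev bound into $L^6$. Note that $\tfrac{1}{6}=\tfrac{2}{3}-\tfrac{1}{2}$ is exactly the one-dimensional Sobolev exponent for $H^{1/2,3/2}$. The inclusion $H^{1/2,3/2}([a,b])\hookrightarrow L^r([a,b])$ for $r\leq 3/2$ is immediate once compactness into $L^{3/2}$ is known, since $[a,b]$ is bounded.

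Let $(u_n)$ be bounded in $H^{1/2,3/2}([a,b])$. By the definition of the quotient norm, choose extensions $U_n\in H^{1/2,3/2}(\R)$ with $U_n|_{[a,b]}=u_n$ and $\|U_n\|_{H^{1/2,3/2}(\R)}\leq 2\|u_n\|_{H^{1/2,3/2}([a,b])}+1$. Write $U_n=G*F_n$, where $F_n:=(I-\Delta)^{1/4}U_n$ is bounded in $L^{3/2}(\R)$ and $G$ is the one-dimensional Bessel kernel of order $1/2$. We use two standard facts about $G$: it is positive and integrable, and it satisfies $G(x)\lesssim |x|^{-1/2}$ for all $x$, with exponential decay at infinity.

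The key steps, in order:
\begin{enumerate}
\item \emph{Equicontinuity of translates.} By Young's inequality,
\[\|U_n(\cdot+h)-U_n\|_{L^{3/2}(\R)}\leq \|G(\cdot+h)-G\|_{L^1(\R)}\,\|F_n\|_{L^{3/2}(\R)}.\]
The right-hand side tends to $0$ as $h\to0$, uniformly in $n$, by continuity of translation in $L^1$. Together with the uniform bound $\|U_n\|_{3/2}\leq\|G\|_1\|F_n\|_{3/2}$, the Kolmogorov--Riesz criterion on the bounded set $[a,b]$ shows that $(u_n)$ is precompact in $L^{3/2}([a,b])$. Working with the restrictions to $[a,b]$ means no cut-off, and hence no multiplier estimate on $H^{s,p}$, is needed.
\item \emph{Uniform $L^6$ bound.} Since $0<G\lesssim|x|^{-1/2}$, we have $|U_n|\lesssim |x|^{-1/2}*|F_n|$. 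The one-dimensional Hardy--Littlewood--Sobolev inequality maps $L^{3/2}\to L^6$, so $\sup_n\|U_n\|_{L^6(\R)}<\infty$. This also gives the continuous embedding into $L^6$.
\item \emph{Interpolation.} Fix $3/2<r<6$ and write $\frac1r=\frac{\theta}{3/2}+\frac{1-\theta}{6}$ with $\theta\in(0,1]$. For a subsequence that is Cauchy in $L^{3/2}([a,b])$, H\"older's inequality gives
\[\|u_n-u_m\|_{L^r([a,b])}\leq\|u_n-u_m\|_{L^{3/2}([a,b])}^{\theta}\left(2\sup_k\|U_k\|_{6}\right)^{1-\theta}\to0.\]
So the subsequence is Cauchy in $L^r([a,b])$. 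For $1\leq r<3/2$, we use $L^{3/2}([a,b])\subset L^r([a,b])$ directly.
\end{enumerate}

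The only point requiring real care is the kernel estimate $G(x)\lesssim |x|^{-1/2}$ together with $G\in L^1$ in dimension one. One way to get it is from the subordination formula
\[G(x)=c\int_0^\infty e^{-t}e^{-\pi x^2/t}\,t^{-1/4}\,\frac{dt}{t^{1/2}}\cdot\frac{1}{t^{1/4}},\]
whose integrand scaling gives exactly the $|x|^{-1/2}$ singularity at the origin and exponential decay at infinity. If one prefers to avoid this, one can instead compare $(I-\Delta)^{-1/4}$ with the Riesz potential $I_{1/2}$ on the Fourier side. The endpoint $r=6$ is excluded precisely because the interpolation exponent satisfies $\theta>0$ only for $r<6$.
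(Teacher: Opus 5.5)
Your argument is correct, but it takes a genuinely different route from the paper. The paper does not prove the statement at all; it imports it from Triebel's general compact-embedding theorem for $F^{s}_{p,q}$ spaces on bounded Lipschitz domains, specialised to $q=2$, where $F^{s}_{p,2}=H^{s,p}$.

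You prove the special case by hand, in four steps:
\begin{enumerate}
\item The Bessel-kernel representation $U_n=G*F_n$ and Young's inequality give uniform equicontinuity of translates in $L^{3/2}(\R)$.
\item The Kolmogorov--Riesz--Fr\'echet theorem, in the form giving precompactness of restrictions to a set of finite measure, then yields precompactness in $L^{3/2}([a,b])$ without any cut-off.
\item The one-dimensional Hardy--Littlewood--Sobolev inequality gives the uniform $L^6$ bound.
\item H\"older interpolation reaches every $3/2<r<6$, and $r<3/2$ is trivial on a bounded interval.
\end{enumerate}

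The citation buys generality and the sharp ``if and only if'' (including failure at $r=6$), none of which the paper needs. Your route buys self-containedness, using only tools the paper already relies on (HLS is in its appendix). It also makes the critical exponent $6=(2/3-1/2)^{-1}$ visible. Finally, it does not depend on the appendix's restatement of Triebel's criterion, which is misprinted. The correct condition is $s_1>s_2$ and $s_1-\frac{n}{p_1}>s_2-\frac{n}{p_2}$; the printed version would wrongly give compactness into $L^r$ for every $1<r<\infty$.

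One slip to fix: your displayed subordination formula has total weight $t^{-1}$ in the integrand. That corresponds to the kernel of order one, which has only a logarithmic singularity at the origin, so it does not give the claimed $|x|^{-1/2}$. For order $1/2$ in dimension one the weight should be $t^{1/4}\cdot t^{-1/2}\cdot t^{-1}=t^{-5/4}$. The substitution $t=x^2u$ then gives $G(x)\le c\,|x|^{-1/2}\int_0^\infty e^{-\pi/u}u^{-5/4}\,du$, and this integral is finite. Since your proof only uses the standard facts $G>0$, $G\in L^1$ and $G\lesssim|x|^{-1/2}$, the slip does not affect its validity.
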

\begin{remark}
    Theorem $3.2.3$ in \cite{Triebel} is stated for general bounded Lipschitz domains $\Omega$ and the more general Triebel-Lizorkin spaces $F^s_{p,q}(\Omega)$, which we will not define here. Instead, we refer the interested reader to \cite{Triebel} for details. In the special case where $q=2$, the space $F^s_{p,q}(\Omega)$ reduces to $H^{s,p}(\Omega)$ (\cite{Triebel},\ Paragraph $3.2.2$). Hence Theorem \ref{thm:Triebel} is merely a special case of Theorem $3.2.3$ in \cite{Triebel} for $q=2$.
\end{remark}
Theorem \ref{thm:Triebel} has the following immediate corollary.
\begin{corollary}\label{cor:compactness}
 Let $0<a<b$. For every $r\in[1,6)$, any bounded sequence in $H^{1/2,3/2}([a,b])$ has a convergent subsequence (whose indices depend on $a,b,r$) in $L^r([a,b])$.    
\end{corollary}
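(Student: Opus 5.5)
This is a direct consequence of Theorem \ref{thm:Triebel}. A bounded sequence in a Banach space that embeds compactly into $L^r([a,b])$ is mapped by that embedding to a relatively compact set. Relatively compact sets in a metric space are sequentially precompact. So the plan has three steps: fix $r$, invoke compactness, and extract a Cauchy subsequence in $L^r$.

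Concretely, fix $0<a<b$ and $r\in[1,6)$. Let $(u_n)$ be bounded in $H^{1/2,3/2}([a,b])$, say $\|u_n\|_{H^{1/2,3/2}([a,b])}\le M$. By Theorem \ref{thm:Triebel}, the inclusion $\iota:H^{1/2,3/2}([a,b])\to L^r([a,b])$ is a compact operator. Hence $\iota$ maps the closed ball of radius $M$ into a subset of $L^r([a,b])$ whose closure is compact. Since $L^r([a,b])$ is a complete metric space, compactness of this closure is equivalent to sequential compactness. Therefore $(\iota u_n)=(u_n)$ has a subsequence $(u_{n_k})$ converging in $L^r([a,b])$ to some $u$ in the closure.

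The subsequence is extracted only after $a,b,r$ are fixed, so its indices depend on these parameters, as the statement records. There is no real obstacle. The only point worth a word is the meaning of ``embeds compactly'': it means that $H^{1/2,3/2}([a,b])\subseteq L^r([a,b])$ with $\iota$ bounded and compact. This identification of elements of the quotient space $H^{1/2,3/2}([a,b])$ with genuine $L^r$ functions on $[a,b]$ is supplied by Triebel's theorem.
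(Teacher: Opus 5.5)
Your argument is correct and is exactly the deduction the paper intends: it records the corollary as immediate from Theorem \ref{thm:Triebel}, since a compact embedding sends bounded sets to relatively compact subsets of $L^r([a,b])$, and in a metric space relatively compact sets are sequentially precompact. One small remark: completeness of $L^r([a,b])$ plays no role here, because compactness and sequential compactness coincide in every metric space.
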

\begin{remark}
    The general version of Theorem \ref{thm:Triebel} holds for both radial and non-radial functions. However, for our purposes, radiality is needed to prove Theorem \ref{thm:A smoothing}, without which we may not apply Theorem \ref{thm:Triebel}. We explain further in the following paragraph.
\end{remark}
Let $f(t)$ be the solution of \eqref{eq:EL} with non-negative initial data $f_0\in L^{3/2}(\R^2) $. If $f(t)$ is radial for all $t\geq0$, then Proposition \ref{prop:A smoothing} implies that $Af(t)$ is bounded in $H^{1/2,3/2}([a,b])$ for every $0<a<b$. In turn, Corollary \ref{cor:compactness} implies that $Af(t)$ has a convergent subsequence in $L^q([a,b])$ for every $q\in[1,6)$ and $0<a<b$, which allows us to show precompactness of $f(t)$ relatively easily. The following results show that if $f_0$ is radial, then so is $f(t)$ for all times $t>0$.

\begin{lemma}
    Let $\cG_\Phi(f):=A(f)^2-\lambda(f)f$ and let $O\in\R^{2\times 2} $ be a rotation. Denote $h_O(x):=h(O\cdot x).$ We have 
    \[\cG_\Phi(f_O)(x)=\cG_\Phi(f)_O(x).\]
\end{lemma}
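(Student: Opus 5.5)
The plan is to prove rotation equivariance of each building block of $\cG_\Phi$ separately and then assemble them. We check how $\cR$, then $\cR^*$, then $A$ interact with a rotation $O$. After that we verify that $\lambda$ is rotation invariant, and finally combine everything. Throughout, $O$ is orthogonal with $\det O=1$, so Lebesgue measure on $\R^2$ and the Hausdorff measure $d\alpha$ on $\bbS^1$ are both preserved by $x\mapsto Ox$ and $\alpha\mapsto O\alpha$.

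\textbf{Step 1: the Radon transform.} We first show that $\cR(f_O)(\alpha,s)=\cR f(O\alpha,s)$. The line $\{x\cdot\alpha=s\}$ is mapped by $x\mapsto Ox$ isometrically onto $\{y\cdot O\alpha=s\}$, because $Ox\cdot O\alpha=x\cdot\alpha$. This map preserves arclength $d\sigma$. Substituting $y=Ox$ then gives the identity.

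\textbf{Step 2: the adjoint.} For $g:\bbS^1\times\R\to\R$, write $g^O(\alpha,s):=g(O\alpha,s)$. We show that $\cR^*(g^O)(x)=\cR^*g(Ox)$. Indeed,
\[\cR^*(g^O)(x)=\int_{\bbS^1}g(O\alpha,x\cdot\alpha)\,d\alpha.\]
Substituting $\beta=O\alpha$ turns $x\cdot\alpha$ into $x\cdot O^{-1}\beta=Ox\cdot\beta$, and $d\alpha=d\beta$.

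\textbf{Step 3: the operator $A$.} By Step 1, $|\cR f_O|\,\cR f_O=\big(|\cR f|\,\cR f\big)^O$, since the nonlinearity $u\mapsto|u|u$ acts pointwise. Applying Step 2 to this function yields $A(f_O)=(Af)_O$, and squaring gives $A(f_O)^2=(Af^2)_O$.

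\textbf{Step 4: the multiplier.} We show that $\lambda(f_O)=\lambda(f)$. By the change of variables $y=Ox$ in both integrals of \eqref{eq:lambda}, using Step 3 and $|f_O|^{1/2}\mathrm{sgn}(f_O)=\big(|f|^{1/2}\mathrm{sgn} f\big)_O$, both the numerator and the denominator are unchanged.

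\textbf{Step 5: assembly.} Combining Steps 3 and 4,
\[\cG_\Phi(f_O)=(Af^2)_O-\lambda(f)f_O=\big(Af^2-\lambda(f)f\big)_O=\cG_\Phi(f)_O,\]
which is the claimed identity.

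None of these steps is a real obstacle. The only point needing care is the bookkeeping of $O$ versus $O^{-1}$ in Step 2. These identities hold for Schwartz $f$ directly, and extend to all of $L^{3/2}$ by density together with the boundedness of $\cR$ and $\cR^*$; the integrals themselves are also justified a.e. by Fubini.
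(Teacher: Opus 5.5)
Your proof is correct and follows the same route as the paper: you show $\cR(f_O)(\alpha,s)=\cR f(O\alpha,s)$, substitute $\beta=O\alpha$ in the $\bbS^1$ integral to get $A(f_O)=(Af)_O$, and change variables to obtain $\lambda(f_O)=\lambda(f)$. The differences are cosmetic: you isolate the adjoint identity $\cR^*(g^O)=(\cR^*g)_O$ for general $g$, and you add the assembly step and a density remark, both of which the paper leaves implicit.
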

\begin{proof}
    We first show $A(f_O)(x)=(Af)_O(x)$. We have
    \begin{align*}
        \cR (f_O)(\alpha,s)&=\int_{x\cdot \alpha=s}f_O(x)d\sigma(x)=\int_{(O\cdot x)\cdot(O\cdot\alpha)=s}f(O\cdot x)d\sigma(x)\\
        &=\int_{y\cdot(O\cdot \alpha)=s}f(y)d\sigma(y)=\cR f(O\cdot \alpha,s),
    \end{align*}
    and thus
    \begin{align*}
        A(f_O)(x)=\cR^*[(\cR f_O)^2](x)&=\int_{\bbS^1}\cR f_O(\alpha,x\cdot\alpha)^2d\alpha=\int_{\bbS^1}\cR f(O\cdot \alpha, x\cdot \alpha)^2d \alpha\\
        &=\int_{\bbS^1}\cR f(O\cdot\alpha,(O\cdot x)\cdot(O\cdot \alpha))^2 d\alpha\\
        &=\int_{\bbS^1} \cR f(\alpha,(O\cdot x)\cdot \alpha)^2d\alpha=Af(O\cdot x)=(Af)_O(x).
    \end{align*}
    Moreover, we have
    \[
    \lambda(f_O)=\frac{\int |f_O|^{1/2}(Af_O)^2}{\int |f_O|^{3/2}}=\frac{\int |f_O|^{1/2}(Af)_O^2}{\int |f_O|^{3/2}}=\frac{\int |f|^{1/2}(Af)^2}{\int |f|^{3/2}}=\lambda(f).
    \]
\end{proof}
\begin{proposition}\label{prop:radial}
    Let $f_0\in L^{3/2} $ be non-negative and radial. Then the solution $f(t)$ of \eqref{eq:EL} is (non-negative and) radial for all $t\geq 0$.
\end{proposition}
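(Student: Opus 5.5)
The argument rests on rotation equivariance of the flow together with uniqueness of solutions (Section \ref{section:well-posedness}). Fix a rotation $O$ and set $g(t):=f(t)_O$, i.e. $g(x,t)=f(Ox,t)$. Composing with the measure-preserving linear map $x\mapsto Ox$ is an isometry of $L^{3/2}(\R^2)$ and commutes with $\partial_t$. Hence $g\in C^1([0,\infty),L^{3/2}(\R^2))$ and $\partial_t g(t)=(\partial_t f(t))_O$. By \eqref{eq:EL} and the preceding Lemma this gives
\[
\partial_t g(t)=\cG_\Phi(f(t))_O=\cG_\Phi(f(t)_O)=\cG_\Phi(g(t)).
\]
So $g$ solves \eqref{eq:EL}, with initial datum $g(0)=(f_0)_O=f_0$ because $f_0$ is radial.

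The one step needing care is uniqueness. It was established for the modified problem \eqref{eq:main_ode} through Theorem \ref{thm:Brezis} and Proposition \ref{prop:Psi Lip}, so I would transfer it in one of two ways.

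\emph{Route via the semigroup.} The semigroup discussion already treats $T(t)f_0$ as the unique solution of $\frac{df}{dt}=\mathcal F(f)$, which Theorem \ref{thm:EL main} asserts. Invoking that uniqueness directly gives $g\equiv f$.

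\emph{Self-contained route.} Alternatively, I would run the same argument at the level of $\psi$. The map $\Psi$ commutes with rotations, since $\cR^*[(\cR|\psi_O|)^2]=(\cR^*[(\cR|\psi|)^2])_O$ by the same computation as in the Lemma, and $\int|\psi_O|^{3/2}=\int|\psi|^{3/2}$. Thus $\psi(t)_O$ solves \eqref{eq:main_ode} with the same constant $c$ and the same initial datum $f_0$. Uniqueness from Theorem \ref{thm:Brezis} gives $\psi(t)_O=\psi(t)$. Then $f(t)=\|f_0\|_{3/2}\,\psi(t)/\|\psi(t)\|_{3/2}$ inherits $f(t)_O=f(t)$.

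In either route, $f(x,t)=f(Ox,t)$ holds for a.e. $x$, for every $t$ and every rotation $O$. This means each $f(t)$ is radial as an element of $L^{3/2}$. If one wants a single null set independent of $O$, apply the identity to a countable dense set of rotations and use continuity of rotation in $L^{3/2}$. For pointwise radiality one may also average, $f(t)=\int_{SO(2)}f(t)_O\,dO$.

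Non-negativity follows from Remark \ref{remark:non-neg}, or directly from the Duhamel formula \eqref{eq:Duhamel}: $e^{-\Lambda}f_0\geq0$ and $Af(s)^2\geq0$.
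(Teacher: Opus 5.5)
Your proposal is correct and follows the paper's proof. Like the paper, you use the preceding Lemma to show that $f(t)_O$ solves \eqref{eq:EL} with the same datum $f_0$, and then conclude $f(t)_O=f(t)$ by uniqueness; your optional route through $\psi$ and \eqref{eq:main_ode}, where uniqueness is actually established via Theorem \ref{thm:Brezis}, tightens the paper's bare appeal to ``uniqueness of solutions'' without changing the argument.
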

\begin{proof}
    Let $O\in\R^{2\times 2} $ be a rotation and let $f_O(x,t):=f(O\cdot x,t)$. We will show that $f_O$ also satisfies \eqref{eq:EL} with initial data $f_0$. Indeed, we have
    \[\partial_t f_O(x,t)=\partial_t f(O\cdot x,t)=\cG_\Phi(f(t))(O\cdot x)=\cG_\Phi(f(t))_O(x)=\cG_\Phi(f_O(t))(x),\]
    and since $f_0$ is radial,
    \[f_O(x,0)=f(O\cdot x,0)=f_0(O\cdot x)=f_0(x).\]
     By uniqueness of solutions we must have $f_O(t)=f(t)$ for all $t\geq0$. Since $O$ was an arbitrary rotation, we conclude that $f(t)$ is radial for all $t\geq0$.
\end{proof}
We now prove a preliminary convergence (precompactness) result for the flow \eqref{eq:EL} for non-negative radial initial data. For any $0<a<b$, define the annulus
$$S_{a,b}:=\{x\in\R^2:\ a< |x|< b\}.$$
\begin{theorem}\label{thm:raw convergence}
    Let $f(t)$ be the solution of \eqref{eq:EL} with non-negative radial initial data $f_0\in L^{3/2}(\R^2)$ of unit norm. Then there exists a strictly increasing sequence of times $t_n\to \infty$ and a non-negative radial function $f_\infty\in L^{3/2}(\R^2)$ such that $f_n:=f(t_n)\to f_\infty$ in $L^{3/2}(S_{a,b})$ for all $0<a<b$.
\end{theorem}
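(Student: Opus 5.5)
Our plan is to transfer compactness from $Af(t)$ to $f(t)$ through the dissipation estimate, Proposition \ref{prop:extr_conv}, which says $\|Af(t)^2-\lambda(f(t))f(t)\|_{3/2}\to 0$.

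First we pin down the scalar multiplier. By Proposition \ref{prop:lambda monotone}, $\lambda(t):=\lambda(f(t))$ is non-decreasing. It is also bounded above by \eqref{eq:lambda upper bound}, namely $\lambda(t)\le\|\cR\|^6$ when $\|f_0\|_{3/2}=1$. Hence $\lambda(t)\to\lambda_\infty\in[\lambda(f_0),\|\cR\|^6]$. If $f_0\neq0$, then $\lambda(f_0)>0$, since $Af_0>0$ wherever $\cR f_0\not\equiv0$. So $\lambda_\infty>0$. (If $f_0=0$ the statement is trivial with $f_\infty=0$.)

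Next we extract convergent subsequences on annuli. Fix $k\in\mathbb{N}$ and set $a_k=1/k$, $b_k=k$. By Proposition \ref{prop:radial}, each $f(t)$ is non-negative and radial. Proposition \ref{prop:A smoothing} then gives $\|Af(t)\|_{H^{1/2,3/2}([a_k,b_k])}\lesssim_k 1$ uniformly in $t$, viewing $Af(t)$ as a function of $r$. By Corollary \ref{cor:compactness} with exponent $3\in[1,6)$, any sequence $t_n\to\infty$ has a subsequence along which $Af(t_n)$ converges in $L^3([a_k,b_k],dr)$. Since $r$ is bounded above and below on $[a_k,b_k]$, the measure $r\,dr$ is comparable to $dr$ there, so this is convergence in $L^3(S_{a_k,b_k})$. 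Squaring is continuous $L^3\to L^{3/2}$, so $Af(t_n)^2$ converges in $L^{3/2}(S_{a_k,b_k})$.

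We then diagonalise. Starting from any increasing sequence of times tending to infinity, we take a nested family of subsequences, one for each $k$, and then the diagonal sequence $t_n$. Along it, $Af(t_n)^2\to G$ in $L^{3/2}(S_{a,b})$ for every $0<a<b$, because every such annulus lies inside some $S_{a_k,b_k}$. Now we write
\[
f(t_n)=\lambda(t_n)^{-1}\bigl(Af(t_n)^2-\partial_tf(t_n)\bigr).
\]
The term $\partial_tf(t_n)$ tends to $0$ in $L^{3/2}(\R^2)$ by Proposition \ref{prop:extr_conv}, and $\lambda(t_n)^{-1}\to\lambda_\infty^{-1}$. Hence $f(t_n)\to f_\infty:=\lambda_\infty^{-1}G$ in $L^{3/2}(S_{a,b})$ for all $0<a<b$.

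It remains to check the properties of $f_\infty$. It is defined almost everywhere on $\R^2\setminus\{0\}$, which is a countable union of annuli, and it is consistent across annuli because the limits agree on overlaps. It is non-negative and radial, since it is an a.e. limit of such functions along a further subsequence. Finally, Fatou's lemma (or weak lower semicontinuity on each annulus, followed by monotone convergence) gives $\|f_\infty\|_{3/2}\le\liminf_n\|f(t_n)\|_{3/2}=1$, so $f_\infty\in L^{3/2}(\R^2)$.

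The main technical point is the lower bound $\lambda_\infty>0$, which is what allows us to divide. It comes from the monotonicity in Proposition \ref{prop:lambda monotone}. We also need to make sure the $L^3$ convergence of the one-variable radial profile matches the two-dimensional $L^{3/2}$ norm on the annulus, which it does because the weight $r$ is bounded there. Note that $f_\infty$ could still be zero; ruling that out is not part of this statement.
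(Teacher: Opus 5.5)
Your proposal is correct and follows essentially the same route as the paper. Both arguments take the uniform $H^{1/2,3/2}$ bound on $Af(t)$ from Proposition \ref{prop:A smoothing}, extract $L^3$ compactness via Corollary \ref{cor:compactness}, pass from $Af(t_n)^2$ to $f(t_n)$ using Proposition \ref{prop:extr_conv} and the limit $\lambda_\infty$, run a diagonal argument over the annuli $S_k$, and use Fatou to get $\|f_\infty\|_{3/2}\le 1$. You diagonalise before dividing by $\lambda$ and the paper does so after, which makes no difference; you also justify $\lambda_\infty\ge\lambda(f_0)>0$, which the paper only asserts, and this is seen most cleanly from the bound $\lambda(f_0)\ge\|\cR f_0\|_3^6/\|f_0\|_{3/2}^3$ coming from the Cauchy--Schwarz step in the proof of Theorem \ref{thm:limits are extremizers}.
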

\begin{proof}
    Let $f(t)$ be the solution of \eqref{eq:EL} with non-negative radial initial data $f_0\in L^{3/2}(\R^2)$ and let $0<a<b$. By Proposition \ref{prop:A smoothing} and Lemma \ref{le:norm invariance} we have $$\|Af(t)\|_{H^{1/2,3/2}([a,b])}\lesssim_{a,b}\|f(t)\|_{3/2}^2=\|f_0\|_{3/2}^3.$$ Hence, by Corollary \ref{cor:compactness} for $r=3\in[1,6)$, there exists a sequence $\tau_n$ depending on $a,b$ such that $Af(\tau_n)$ converges to a non-negative function $G_{a,b}$ in $L^3([a,b])$. Since the domain $[a,b]$ is bounded, we may upgrade $1$-dimensional convergence of $Af(\tau_n)$ in $L^{3}([a,b],dr)$ to $2$-dimensional convergence in $L^{3}(S_{a,b}, dx)$. Indeed, we have
\begin{align*}
    \int_{S_{a,b}}\left|Af(\tau_n)-G_{a,b}\right|(x)^{3}dx&\sim\int_a^b\left|Af(\tau_n)-G_{a,b}\right|(r)^{3}r\,dr\\
    &\leq b\int_a^b\left|Af(\tau_n)-G_{a,b}\right|(r)^{3}dr\\
    &\sim b\|Af(\tau_n)-G_{a,b}\|_{L^3([a,b])}^3 \to 0,
\end{align*}
as $n\to \infty$. We claim that $(Af(\tau_n))^2$ converges to $G_{a,b}^2$ in $L^{3/2}(S_{a,b})$. Indeed, by Cauchy--Schwarz we have
\begin{align*}
    \|(Af(\tau_n))^2-G_{a,b}^2\|_{L^{3/2}(S_{a,b})}&=\|\left(Af(\tau_n)+G_{a,b}\right)(Af(\tau_n)-G_{a,b})\|_{L^{3/2}(S_{a,b})}\\
    &\leq\|Af(\tau_n)+G_{a,b}\|_{L^{3}(S_{a,b})}\|Af(\tau_n)-G_{a,b}\|_{L^{3}(S_{a,b})},
\end{align*}
and 
\begin{align*}
    \|Af(\tau_n)+G_{a,b}\|_{L^{3}(S_{a,b})}&\leq\|Af(\tau_n)\|_{L^3(S_{a,b})}+\|G_{a,b}\|_{L^3(S_{a,b})}\\
    &\lesssim\|f(\tau_n)\|_{L^{3/2}(\R^2)}^2 +\|G_{a,b}\|_{L^3(S_{a,b})}\\
    &=\|f_0\|_{L^{3/2}(\R^2)}^2 +\|G_{a,b}\|_{L^3(S_{a,b})}.
\end{align*}
Moreover, by Proposition \ref{prop:extr_conv}, $A(f(\tau_n))^2-\lambda(f(\tau_n))f(\tau_n)$ converges to $0$ in $L^{3/2}(\R^2)$. In particular, it converges to $0$ in $L^{3/2}(S_{a,b})$. Since $\lambda(f(t))$ is non-decreasing and bounded, it also converges to some $\lambda_\infty>0$ independently of $a,b$. Therefore, we have
\[ f(\tau_n)=\frac{A(f(\tau_n))^2+\left[\lambda(f(\tau_n))f(\tau_n)-A(f(\tau_n))^2\right]}{\lambda(f(\tau_n))}\to \frac{G_{a,b}^2}{\lambda_\infty},\]
in $L^{3/2}(S_{a,b})$.\\\\
We now extract a sequence that converges in $L^{3/2}(S_{a,b})$ for any $0<a<b$, by a diagonal Cantor argument. For each $k\in\mathbb{N}$, let $$S_k:=\{1/k\leq|x|\leq k\}.$$ By applying the above argument iteratively, we may find nested subsequences $$(t_n^{(1)})_{n\in\mathbb{N}}\supseteq\cdots\supseteq(t_n^{(k)})_{n\in\mathbb{N}}\supseteq(t_n^{(k+1)})_{n\in\mathbb{N}}\supseteq\cdots,$$ 
such that for each $k\in\mathbb{N}$, the sequence $f_n^{(k)}:=f(t_n^{(k)})$ converges (as $n\to \infty$) to a limit function $f^{(k)}$ in $L^{3/2}(S_k)$.
Since the subsequences are nested, for every $k\in\mathbb{N}$, uniqueness of the limit in $L^{3/2}(S_k)$ guarantees that $f^{(k)}\equiv f^{(k+1)}$ in $S_k$. Hence, the function $f_\infty:\R^2\setminus\{0\}\to \R$ such that $f_{\infty}\equiv f^{(k)}$ in $S_k$ for all $k\in\mathbb{N}$, is well-defined. We extend $f_\infty$ to $\R^2$ by setting $f_\infty(0)=1$. Moreover, define 
\[t_n:=t_n^{(n)},\qquad f_n:=f(t_n).\]
Then, for any $0<a<b$ we have
\[\lim_{n\to \infty}\|f_n-f_\infty\|_{L^{3/2}(S_{a,b})}=0. \]
Indeed, let $0<a<b$ and choose $k\in\mathbb{N}$ such that $[a,b]\subset[1/k,k]$. By the nesting property of the sequences $(t_n^{(k)})_n$, for every $k\in\mathbb{N},$ $(t_n)_{n\geq k}$ is a subsequence of $(t_m^{(k)})_m$. Hence $f_n\to f^{(k)}\equiv f_\infty$ as $n\to \infty$ in $L^{3/2}(S_k)$. Since $S_{a,b}\subset S_k$, the claim follows. Moreover, a limit of non-negative radial functions is itself non-negative and radial. This may be verified by extracting a pointwise-convergent subsequence in every annulus. \\\\
We show that $f_\infty\in L^{3/2}(\R^2)$. For any $0<a<b$ we have
\begin{align*}
    \int_{S_{a,b}}f_\infty(x)^{3/2}dx= \lim_{n\to \infty} \int_{S_{a,b}}f_n(x)^{3/2}dx\leq\limsup_{n}\int_{\R^2}f_n^{3/2}= 1.
\end{align*}
We take the limit as $a\to 0, b\to \infty$ to obtain
\[\int_{\R^2}f_\infty^{3/2}\leq 1. \]
\end{proof}
\begin{remark}\label{remark:possible vanishing}
    Notice that Theorem \ref{thm:raw convergence} does not assert that the limit $f_\infty$ is non-trivial. As we will see in Subsection \ref{subsec:convergence}, this will become the case after rescaling $f(t)$ appropriately.
\end{remark}

\subsection{Symmetry-Breaking and Non-Vanishing After Rescaling}\label{subsec:convergence}
By Theorem \ref{thm:raw convergence}, the solution $f(t)$ of \eqref{eq:EL} with non-negative, radial initial data has a time-subsequence $f(t_n)$ that converges locally in $L^{3/2}(a\leq|x|\leq b)$ for every $0<a<b$. To recover a non-trivial limit, we need an additional mechanism that prevents loss of precompactness. In PDE theory, this mechanism is often provided by tightness estimates of the following form. Let $u:\R^n\times \R_+\to \R$ be such that $\|u(t)\|_{L^p(\R^n)}=1$ for all $t\geq0$. We say $u$ is \emph{tight} iff for every $\varepsilon>0$ there exist $R>r>0$ such that
\begin{equation}\label{eq:tightness}
    \sup_t \int_{\{|x|\leq r\}\cup\{|x|\geq R\}} |u(x,t)|^{p}dx <\varepsilon,
\end{equation}
or equivalently
\begin{equation}\label{eq:tightness 2}
    \inf_t \int_{\{r\le |x|\le R\}} |u(x,t)|^{p}dx \geq1-\varepsilon.
\end{equation}
Tightness ensures that the $L^p$-mass of $u$ cannot spread out or transport to infinity, or concentrate near the origin too fast, such that $u(t)$ ends up converging weakly to $0$ or to a Dirac mass. \\\\
However, many PDEs that are invariant under non-compact symmetry groups (such as translations, dilations and shears) lack tightness. Even in the presence of a conservation law (such as mass conservation), convergence to a non-zero stationary state can fail in dynamical systems (e.g. heat/Schr\"odinger equation). A common way to deal with this issue is to break symmetry by fixing an affine frame (e.g. centre and rescale at every time $t$), which ensures that the affine-transformed trajectory is precompact in $L^p_{\mathrm{loc}}(\R^2)$. It is the precompactness property of the rescaled dynamics which yields non-trivial limits in $L^p_{\mathrm{loc}}(\R^2)$. In this subsection, we show that this is indeed the case for \eqref{eq:EL} --- at least for non-negative, radial initial data.\\\\
As we have seen in Subsection \ref{subsec:sobolev}, a convenient (but perhaps not necessary; see Section \ref{sec:non-radial}) assumption that rules out a large number of non-compact symmetries is \emph{radiality}. Indeed, if $f(t)$ is radial for all times $t\geq 0$, then three main symmetry groups are ruled out:
\begin{itemize}
    \item \textbf{Translations:} By fixing the centre of mass at the origin, we ensure that mass cannot be transported to infinity.
    \item \textbf{Anisotropic Dilations:} Radiality eliminates stretching unevenly in different directions.
    \item \textbf{Rotations:} These are automatically eliminated, since radial functions are invariant under rotations.
\end{itemize}
Thus, the only remaining symmetry is isotropic dilations, which we deal with by rescaling the solution appropriately at each time $t$. In what follows, $f(t)$ will denote the solution of \eqref{eq:EL} with non-negative, radial initial data $f_0\in L^{3/2}(\R^2)$ of unit norm.
\begin{theorem}\label{thm:rescaled convergence}
    Let $\varepsilon\in(0,10^{-2})$ and suppose that the initial data $f_0$ satisfies 
    \begin{equation}\label{eq:trapping}
        \Phi(f_0)^3\geq \|\cR\|^3(1-\varepsilon).
    \end{equation}
    Then there exist sequences of times and scales $t_n,\mu_n>0$, respectively, such that $F_n:= \mu_n^{4/3}f_{\mu_n}(t_n)$ converges in $L^{3/2}(S_{a,b})$ for all $0<a<b$ to a \emph{non-trivial} limit function $F_\infty\in L^{3/2}(\R^2)$ which is independent of  $a,b$. Furthermore, $(AF_n)^2\to(AF_\infty)^2$ in $L^{3/2}(S_{a,b})$, for all $0<a<b$.
\end{theorem}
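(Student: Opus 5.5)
Our plan is to combine three ingredients: the scale invariance of the flow, the precompactness argument of Theorem \ref{thm:raw convergence}, and a mass-trapping estimate that follows from the threshold \eqref{eq:trapping} together with Drouot's stability Theorem \ref{thm:Drouot}.

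\textbf{Scaling.} First we check that the dilation $f\mapsto \mu^{4/3}f_\mu$ preserves $\|f\|_{3/2}$ and $\Phi$. It also commutes with $A$ in the sense that $A(\mu^{4/3}f_\mu)^2=\mu^{4/3}\,\mu^{\beta}(Af)_\mu^2$ for an explicit power $\beta$. Since $\cR(f_\mu)(\alpha,s)=\mu^{-1}\cR f(\alpha,\mu s)$ and $\cR^*(g(\cdot,\mu\cdot))=(\cR^*g)_\mu$, we get $A(\mu^{4/3}f_\mu)=\mu^{2/3}(Af)_\mu$, so $(AF)^2=\mu^{4/3}(Af)_\mu^2$. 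Thus $\beta=0$, and $\lambda$ is scale invariant.

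Consequently, for any choice of $\mu_n$ the rescaled functions $F_n$ satisfy the following:
\begin{itemize}
\item $\|F_n\|_{3/2}=1$;
\item $\Phi(F_n)=\Phi(f(t_n))$;
\item $\lambda(F_n)=\lambda(f(t_n))\to\lambda_\infty$;
\item $\|(AF_n)^2-\lambda(F_n)F_n\|_{3/2}=\|\partial_tf(t_n)\|_{3/2}\to0$, by Proposition \ref{prop:extr_conv}.
\end{itemize}
The rescaled sequence therefore inherits everything used in the proof of Theorem \ref{thm:raw convergence}. Proposition \ref{prop:A smoothing} gives uniform $H^{1/2,3/2}([a,b])$ bounds on $AF_n$, and Corollary \ref{cor:compactness} then gives $L^3$ convergence of $AF_n$ on each annulus. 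Hence $(AF_n)^2$ converges in $L^{3/2}(S_{a,b})$, and $F_n=[(AF_n)^2+o(1)]/\lambda(F_n)$ converges to some $F_\infty$. The same diagonal argument over $S_k$ yields a single subsequence and a limit independent of $a,b$, with $\|F_\infty\|_{3/2}\le1$. Applying the Cauchy--Schwarz argument once more gives the final claim $(AF_n)^2\to(AF_\infty)^2$: the $L^3(S_{a,b})$ limit $G$ of $AF_n$ is identified with $AF_\infty$ via $G^2=\lambda_\infty F_\infty$. We will need to verify this identification, either by weak continuity of $A$ on annuli or by showing that the mass outside annuli is negligible. That negligibility is exactly what the tightness step below provides.

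\textbf{Choice of $\mu_n$ and non-vanishing.} We choose $\mu(t)$ so that the rescaled function has a fixed portion of its mass on the unit annulus, for instance
\[\int_{|x|\le1}\mu^{4/3\cdot 3/2}f_\mu(t)^{3/2}=\tfrac12.\]
Such a $\mu$ exists by continuity and monotonicity in $\mu$, since radiality rules out other symmetries. The main obstacle is the \emph{mass-trapping} lemma (Lemma \ref{lemma:mass trapping}): we must show that the normalised $F_n$ is tight in the sense of \eqref{eq:tightness}.

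The plan for that lemma is as follows. By Proposition \ref{prop:monotone}, $\Phi(F_n)^3\ge\|\cR\|^3(1-\varepsilon)$, so Theorem \ref{thm:Drouot} gives $d(F_n,\cE)\lesssim\varepsilon^{1/2}$. Hence there is an extremiser $h_n$ within $O(\varepsilon^{1/2})$ of $F_n$ in $L^{3/2}$. Using radiality, and replacing $h_n$ by a radial extremiser $c_n(1+|x/s_n|^2)^{-1}$ (by Theorem \ref{thm:Christ}, possibly after averaging over rotations and using convexity of the distance), we see that $h_n$ is a fixed profile at scale $s_n$.

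The normalisation $\int_{|x|\le1}F_n^{3/2}=\frac12$ forces $s_n\sim1$. Indeed, if $s_n\to0$ or $s_n\to\infty$, the profile would place mass nearly $1$ or nearly $0$ inside the unit disc, contradicting the normalisation once $\varepsilon<10^{-2}$ makes the error small compared with $\frac12$. With $s_n\sim1$, the profile has at most a small amount of mass outside $S_{r,R}$ for suitable $r<R$. The $\varepsilon^{1/2}$-closeness then gives
\[\int_{S_{r,R}}F_n^{3/2}\ge 1-C\varepsilon^{1/2}-\delta>0\]
uniformly in $n$.

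Passing to the limit on $S_{r,R}$, where $F_n\to F_\infty$ in $L^{3/2}$, we obtain $\int_{S_{r,R}}F_\infty^{3/2}>0$, which gives non-triviality. The quantitative bookkeeping here is the delicate part: the tails of the profile, the constant $c_\cR$, and the specific threshold $10^{-2}$ all have to be tracked carefully.
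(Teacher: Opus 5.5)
Your scaling computation and your reuse of the precompactness argument of Theorem \ref{thm:raw convergence} agree with the paper. The final assertion $(AF_n)^2\to(AF_\infty)^2$, however, is not proved. The relation $G^2=\lambda_\infty F_\infty$ for the $L^3(S_{a,b})$-limit $G$ of $AF_n$ does not identify $G$ with $AF_\infty$, and that identification is exactly what is later needed to show that $F_\infty$ solves the Euler--Lagrange equation. Neither of your proposed mechanisms supplies it. Weak convergence $F_n\rightharpoonup F_\infty$ does not pass through the quadratic map $A$. Your tightness step also does not give tightness in the sense of \eqref{eq:tightness}: it only yields $\int_{S_{r,R}}F_n^{3/2}\ge 1-C\varepsilon^{1/2}-\delta$ with $\varepsilon$ fixed. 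So the escaping mass is bounded by a fixed quantity rather than one tending to zero, and its contribution to $A$ on $S_{a,b}$ is not $o(1)$.

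The missing idea is a localisation property of the radial operator $A$ that needs no tightness at all. Split $h_n=F_n-F_\infty$ into its parts on $|x|<R^{-1}$, on $R^{-1}\le|x|\le R$, and on $|x|>R$.
\begin{itemize}
\item Inner part: $\cR h_n^{(1)}$ is supported in $[-R^{-1},R^{-1}]$, so $A|h_n^{(1)}|(r)\lesssim_a R^{-1/3}$ for $r\in[a,b]$.
\item Outer part: H\"older in $r\,dr$ gives $\cR|h_n^{(3)}|(s)\lesssim (R^2-s^2)^{-1/6}$ for $s\le b$, hence $A|h_n^{(3)}|\lesssim R^{-2/3}$ on $[a,b]$.
\item Middle part: this is controlled by the $L^{3/2}$ convergence on $R^{-1}\le|x|\le R$.
\end{itemize}
The first two bounds are uniform in $n$ and use only $\|h_n\|_{3/2}\le 2$. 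Cross terms are handled by the pointwise Cauchy--Schwarz inequality of Lemma \ref{lemma:Cauchy-Schwarz}.

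The non-vanishing argument has a second gap, and it is not mere bookkeeping. Theorem \ref{thm:Drouot} gives $d(F_n,\cE)\le(\varepsilon/c_\cR)^{1/2}$. Your argument needs this to be small compared with absolute quantities such as $\|F_n\|_{L^{3/2}(|x|\le 1)}=2^{-2/3}$. Since $c_\cR$ is not explicit, this holds only for $\varepsilon$ below an unknown threshold depending on $c_\cR$, not for every $\varepsilon\in(0,10^{-2})$ as stated.

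The reduction to a radial extremiser is also unjustified as written. $\cE$ is not convex, $d(\cdot,\cE)$ is not a convex function, and the rotational average of $c/(1+|\phi(x)|^2)$ is in general not an extremiser. You would need a separate compactness argument showing that a radial function close to $\cE$ is close to a radial extremiser.

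The paper avoids Drouot's theorem altogether. Lemma \ref{lemma:two scale} expands $\int(\cR f)^3$ over $f=f_1+f_2+f_3$ and bounds the interaction between the inner and outer pieces by $\lesssim\sigma^{1/3}$. It does so using the radial pointwise bound $\cR f_1\lesssim(\sigma/r)^{1/3}$ on $[0,r]$ together with $\|\cR f_2\|_2\lesssim r^{1/6}$. This yields $\alpha\beta\le 2\varepsilon+cm^{2/3}$ with absolute constants.

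This lemma in fact repairs your own normalisation directly. If $G_{f(t)}(\mu_t)=\tfrac12$, apply it at $r=\sigma\mu_t$ with $m=\int_{\sigma\le|x|\le1}F(t)^{3/2}$. This gives $\tfrac14-2\varepsilon\le\tfrac m2+cm^{2/3}$, hence a uniform positive lower bound on the mass in a fixed annulus, with no appeal to stability.
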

\begin{remark}
    The condition \eqref{eq:trapping} is a statement about quantitative closeness of $f_0$ to the extremiser manifold $\cE$. In fact, in combination with Theorem \ref{thm:Drouot} it implies that  $d(f_0,\cE)<\varepsilon$. A further discussion on whether this condition is necessary for non-triviality of the limit may be found in Section \ref{sec:energy threshold}.
\end{remark}
\begin{remark}
    As the reader might expect, the difference between the conclusions of Theorems \ref{thm:raw convergence} and \ref{thm:rescaled convergence} is the non-triviality assertion in the latter.
\end{remark}
Before proving Theorem \ref{thm:rescaled convergence}, we prove several lemmas related to condition \eqref{eq:trapping} that will be used in the proof. The following two lemmas combined essentially assert that if a function is near the extremiser-manifold $\cE$, then its mass cannot simultaneously concentrate in two disjoint regions whose supports are well-separated. Precisely,
\begin{lemma}\label{lemma:two scale}
    Let $\varepsilon\in(0,10^{-2})$, let $c>0$ be an absolute constant (the value of which is evident from the proof) and let $c\sigma^{1/3}<\varepsilon$. Let $f\in L^{3/2}(\R^2)$ be non-negative, radial with $\|f\|_{3/2}=1$ and suppose 
    \[\Phi(f)^3\ge \|\cR\|^3(1-\varepsilon).\]
    For $r>0$ define $G_f(r):=\int_{|x|\leq r}f(x)^{3/2}dx.$ Then, we have
    \begin{equation}\label{eq:two scale}
        G_f(r)\left[1-G_f(r/\sigma)\right]\leq2\varepsilon+c\left[G_f(r/\sigma)-G_f(r)\right]^{2/3},
    \end{equation}
    for all $r>0$.
\end{lemma}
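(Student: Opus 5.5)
Split $f$ into three radial pieces, $f=f_1+f_2+f_3$, with
\[f_1:=f\chi_{\{|x|\le r\}},\qquad f_2:=f\chi_{\{r<|x|\le r/\sigma\}},\qquad f_3:=f\chi_{\{|x|>r/\sigma\}}.\]
Write $\alpha:=G_f(r)$, $\beta:=G_f(r/\sigma)-G_f(r)$ and $\gamma:=1-G_f(r/\sigma)$, so that $\alpha+\beta+\gamma=1$ and the $L^{3/2}$ norms are $\|f_1\|_{3/2}=\alpha^{2/3}$, $\|f_2\|_{3/2}=\beta^{2/3}$, $\|f_3\|_{3/2}=\gamma^{2/3}$. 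The idea is to show that $\|\cR f\|_3^3$ is essentially bounded by $\|\cR\|^3(\alpha^2+\gamma^2)$ plus errors. Then the near-extremality hypothesis $\|\cR f\|_3^3\ge\|\cR\|^3(1-\varepsilon)$, together with $1=(\alpha+\beta+\gamma)^2$, forces $2\alpha\gamma$ to be small.

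First I remove the middle piece. By Minkowski and Oberlin--Stein, $\|\cR f\|_3\le \|\cR(f_1+f_3)\|_3+\|\cR\|\beta^{2/3}$. Since $\|\cR(f_1+f_3)\|_3\le\|\cR\|$, cubing gives
\[\|\cR f\|_3^3\le \|\cR(f_1+f_3)\|_3^3+C\|\cR\|^3\beta^{2/3}.\]

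Next I expand the cube. Because everything is non-negative,
\[\|\cR(f_1+f_3)\|_3^3=\int(\cR f_1)^3+(\cR f_3)^3+3(\cR f_1)^2\cR f_3+3\cR f_1(\cR f_3)^2.\]
The pure terms are bounded by $\|\cR\|^3(\alpha^2+\gamma^2)$. The crux is to show that the cross terms are $O(\sigma^{1/3})$. For radial functions $\cR f_1(s)$ is supported in $|s|\le r$, so both cross terms live in the region $|s|\le r$. Write $s=r\tau$ and rescale by $r$; it then suffices to treat $r=1$.

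For $|s|\le1$ I use the radial formula $\cR f_3(s)=2\int_{1/\sigma}^\infty \rho f(\rho)(\rho^2-s^2)^{-1/2}\,d\rho$. Since $\rho\ge1/\sigma\ge2|s|$, this is comparable to $\int_{1/\sigma}^\infty f(\rho)\,d\rho$. Applying H\"older with weight $\rho$,
\[\int_{1/\sigma}^\infty f\,d\rho\le\Big(\int f^{3/2}\rho\,d\rho\Big)^{2/3}\Big(\int_{1/\sigma}^\infty\rho^{-2}\,d\rho\Big)^{1/3}\lesssim\gamma^{2/3}\sigma^{1/3}.\]
So $\|\cR f_3\|_{L^\infty(|s|\le 1)}\lesssim\sigma^{1/3}$. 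The cross terms are then controlled as follows:
\[\int_{|s|\le1}(\cR f_1)^2\cR f_3\lesssim\sigma^{1/3}\|\cR f_1\|_3^2\cdot 1^{1/3},\qquad \int_{|s|\le1}\cR f_1(\cR f_3)^2\lesssim\sigma^{2/3}\|\cR f_1\|_3.\]
The $s$-interval has bounded length, and the $\bbS^1$ factor is constant in the radial case. Both cross terms are therefore $\le c'\sigma^{1/3}\le\varepsilon$ once $c\ge c'$, using $c\sigma^{1/3}<\varepsilon$. This scale-invariant estimate, and checking that the scalings match so that the bound is independent of $r$, is the main obstacle. It works because $\cR f_3$ restricted to $|s|\le r$ scales like $r^{-1/3}\cdot(r/\sigma)^{\dots}$, and the factor $r$ cancels.

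Finally I combine. We get
\[1-\varepsilon\le\alpha^2+\gamma^2+\varepsilon+C\beta^{2/3}.\]
On the other hand $1=(\alpha+\beta+\gamma)^2\ge\alpha^2+\gamma^2+2\alpha\gamma$. Subtracting, $2\alpha\gamma\le 2\varepsilon+C\beta^{2/3}$, so $\alpha\gamma\le 2\varepsilon+c\beta^{2/3}$ after absorbing constants. This is exactly \eqref{eq:two scale}, since $\alpha\gamma=G_f(r)[1-G_f(r/\sigma)]$.
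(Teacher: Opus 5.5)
Your proof is correct, and its architecture is the paper's. You use the same three-piece radial split. The pure terms are bounded by $\|\cR\|^3(\alpha^2+\gamma^2)$, and the inner--outer cross terms are localised to $|s|\le r$. They are controlled by the pointwise bound $\cR f_3(s)\lesssim(\sigma/r)^{1/3}$ on $|s|\le r$, which comes from the radial formula and H\"older with respect to $\rho\,d\rho$. The closing algebra is the same as well.

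You differ in two technical steps, and each is a genuine simplification. First, the paper expands $\int(\cR f)^3$ over all three pieces and bounds every mixed term containing the middle piece separately. Your Minkowski step, $\|\cR f\|_3\le\|\cR(f_1+f_3)\|_3+\|\cR\|\beta^{2/3}$, removes them all at once. Second, and more substantively, for the inner piece the paper invokes the $L^{4/3}\to L^2$ bound for $\cR$, proved in its appendix via the Fourier slice identity and Hardy--Littlewood--Sobolev. This gives $\|\cR f_{\mathrm{in}}\|_2^2\lesssim r^{1/3}$ and $\|\cR f_{\mathrm{in}}\|_1\lesssim r^{2/3}$. You obtain exactly the same powers of $r$ from Oberlin--Stein alone plus H\"older on $\bbS^1\times[-r,r]$, namely $\int_{|s|\le r}(\cR f_1)^2\le(4\pi r)^{1/3}\|\cR f_1\|_3^2$ and $\int_{|s|\le r}\cR f_1\le(4\pi r)^{2/3}\|\cR f_1\|_3$. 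So that auxiliary estimate is not needed for this lemma. Your final step via $(\alpha+\beta+\gamma)^2\ge\alpha^2+\gamma^2+2\alpha\gamma$ is also a cleaner form of the paper's use of $(1-m)^2$ and $m^2\le m$.

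Two things to tidy. The garbled sentence about how the scalings match is unnecessary. The reduction to $r=1$ is justified by the dilation $f\mapsto r^{4/3}f(r\,\cdot)$, which preserves radiality, $\|f\|_{3/2}$ and $\Phi(f)$, and sends $G_f(\rho)$ to $G_f(r\rho)$. Also, your cross-term bound $c'\sigma^{1/3}$ is taken before dividing by $\|\cR\|^3$. So the requirement is really $c\ge c'/\|\cR\|^3$, with $c$ also large enough that $\sigma\le 1/2$, which your step $\rho\ge 2|s|$ uses.
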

\begin{proof}
    Fix $r>0$ and let $$f_1:=f\, \chi_{|x|\geq \frac{r}{\sigma}},\qquad f_2:=f\,\chi_{|x|\leq r},\qquad f_3:=f-f_1-f_2.$$
    Let also
    \begin{gather*}
        \alpha:=\int_{\R^2} f_2^{3/2}=G_f(r),\quad \beta:=\int_{\R^2} f_1^{3/2}=1-G_f(r/\sigma),\\
        m:=\int_{\R^2}f_3^{3/2}=G_f(r/\sigma)-G_f(r) . 
    \end{gather*} 
    Then
    \[\alpha+\beta+m=1,\]
    and \eqref{eq:two scale} becomes
    \begin{equation}\label{eq:two scale v2}
        \alpha\beta\leq 2\varepsilon+c m^{2/3}.
    \end{equation}
    We have
    \begin{align}\label{eq:decomp}
        \|\cR\|^3(1-\varepsilon)\leq\Phi(f)^3=\int (\cR f)^3&=\int (\cR f_1)^3+\int (\cR f_2)^3+\int (\cR f_3)^3\\
        &+3\int (\cR f_1)^2 \cR f_2+3\int\cR f_1(\cR f_2)^2\nonumber\\
        &+3\int(\cR f_3)^2(\cR f_1+\cR f_2)\\
        &+3\int\cR f_3\left[(\cR f_1)^2+(\cR f_2)^2\right] \nonumber\\
        &+6\int \cR f_1\cR f_2 \cR f_3. \nonumber
    \end{align}
    We have
    \begin{align*}
        \int (\cR f_1)^3+\int (\cR f_2)^3+\int (\cR f_3)^3\leq\|\cR\|^3[\alpha^2+\beta^2+m^2],
    \end{align*}
    and
    \begin{align}\label{eq:m bound}
        &3\int(\cR f_3)^2(\cR f_1+\cR f_2)+3\int\cR f_3\left[(\cR f_1)^2+(\cR f_2)^2\right]
        +6\int \cR f_1\cR f_2 \cR f_3\\
        &\leq3(\alpha^{2/3}+\beta^{2/3})m^{4/3}+3m^{2/3}(\alpha^{4/3}+\beta^{4/3})+6\alpha^{2/3}\beta^{2/3}m^{2/3}\nonumber\\
        &\leq 100 m^{2/3}< cm^{2/3},
    \end{align}
    since $\alpha,\beta,m\leq 1$. Moreover, since $ \mathrm{supp}(f_2)\subseteq B(0,r)$, we have $\mathrm{supp}(\cR f_2)\subseteq[-r,r]$. Hence, the products $(\cR f_1)^2\cR f_2$ and $(\cR f_2)^2\cR f_1$ are also supported in $[-r,r]$. Thus, we have
    \begin{align*}
        &3\int_{|s|\leq r} (\cR f_1)^2 \cR f_2+3\int_{|s|\leq r} \cR f_1(\cR f_2)^2\\
        \leq &3\|\cR f_1\|_{L^\infty([0,r])}^2\|\cR f_2\|_{1}+3\|\cR f_1\|_{L^\infty([0,r])}\|\cR f_2\|_2^2\\
        \leq &3\|\cR f_1\|_{L^\infty([0,r])}^2\|\cR f_2\|_{2}r^{1/2}+3\|\cR f_1\|_{L^\infty([0,r])}\|\cR f_2\|_2^2
    \end{align*}
    We have $\mathrm{supp}(f_2)\subseteq B(0,r)$ and $\cR$ is bounded from $L^{4/3}(\R^2)$ to $L^2(\R^2)$ with operator norm $C:=C_{L^{4/3}\to L^2}>0$ with a constant related to the optimal constant in the Hardy--Littlewood--Sobolev inequality. A proof of this fact may be found in Appendix \ref{ap:Radon bound}. Thus, we have 
    \begin{align*}
        \|\cR f_2\|_2\leq C\|f_2\|_{L^{4/3}(|x|\leq r)}\leq C \|f_2\|_{3/2}r^{1/6}\leq C r^{1/6},
    \end{align*}
    and since $\mathrm{supp}(f_1)\subseteq\{|x|\geq r/\sigma\}$, for $|s|\leq r$ we have
    \begin{align*}
        \cR f_1(s)=\int_{\sqrt{s^2+t^2}=r/\sigma}^{t=\infty} f(\sqrt{s^2+t^2})dt=\int_{r/\sigma}^\infty \frac{uf(u)}{\sqrt{u^2-s^2}}du.
    \end{align*}
    We let $d\mu(u):=u\,du$ and apply H\"older's inequality to the last integral with respect to the measure $d\mu(u)$ to get
    \begin{align*}
        \cR f_1(s)&\leq\left(\int_{r/\sigma}^\infty f_1(u)^{3/2}d\mu(u)\right)^{2/3}\left(\int_{r/\sigma}^\infty \frac{d\mu(u)}{\sqrt{u^2-s^2}^3}\right)^{1/3}\\
        &\leq \|f_1\|_{L^{3/2}(\R^2)}\left(\frac{1}{2}\int_{r/\sigma}^\infty \frac{2u\,du}{(u^2-s^2)^{3/2}}\right)^{1/3}\\
        &\leq\left(\left[-(u^2-s^2)^{-1/2}\right]_{r/\sigma}^\infty\right)^{1/3}=\left(\frac{1}{\sqrt{(r/\sigma)^2-s^2}}\right)^{1/3}\\
        &\leq\sigma^{1/3}\left(\frac{1}{\sqrt{r^2-(\sigma r)^2}}\right)^{1/3}\leq \frac{\sigma^{1/3}}{r^{1/3}}\left(\frac{1}{\sqrt{1-\sigma^2}}\right)^{1/3}\leq 2\frac{\sigma^{1/3}}{r^{1/3}}.
    \end{align*}
    Therefore,
    \begin{align}\label{eq:sigma bound}
        3\int (\cR f_1)^2 \cR f_2+3\int\cR f_1(\cR f_2)^2&\leq 12C \frac{\sigma^{2/3}}{r^{2/3}}r^{1/6}r^{1/2}+6C^2\frac{\sigma^{1/3}}{r^{1/3}}r^{1/3}\\
        &=(12C+6C^2)\sigma^{1/3}. \nonumber
    \end{align}
    Inequalities, \eqref{eq:m bound}, \eqref{eq:sigma bound} and \eqref{eq:decomp} yield
    \begin{align*}
        \|\cR\|^3(1-\varepsilon)&\leq\|\cR\|^3[\alpha^2+\beta^2+m^2]+c\sigma^{1/3}+cm^{2/3}\\
        &\leq\|\cR\|^3[(1-m)^2-2\alpha\beta+m^2]+(12C+6C^2)\sigma^{1/3}+100 m^{2/3}.
    \end{align*}
    Dividing by $\|\cR\|^3$ and rearranging yields
    \begin{align*}
        2\alpha\beta\leq\varepsilon +2(m^2-m)+\frac{12C+6C^2}{\|\cR\|^3}\sigma^{1/3}+\frac{100}{\|\cR\|^3}m^{2/3},
    \end{align*}
    and since $m\leq 1$, we have $m^2\leq m$. Hence, by choosing $c>\max\left\{\frac{12C+6C^2}{\|\cR\|^3},\frac{100}{\|\cR\|^3}\right\}$, we obtain
    \[\alpha\beta\leq 2\varepsilon+cm^{2/3},\]
    which is \eqref{eq:two scale v2}.
\end{proof}
\begin{lemma}\label{lemma:mass trapping}
    Let $\varepsilon\in(0,10^{-2})$, $f\in L^{3/2}(\R^2)$ be non-negative, radial with $\|f\|_{3/2}=1$. Let $c>0$ be the absolute constant in the statement of Lemma \ref{lemma:two scale} and let $c\sigma^{1/3}\leq \varepsilon$. For $r>0$ define
    \[m(r):=\int_{r\leq|x|\leq r/\sigma}f(x)^{3/2}dx=G_f(r/\sigma)-G_f(r).\]
    Suppose
    \begin{equation}
        \Phi(f)^3\geq \|\cR\|^3(1-\varepsilon).
    \end{equation}
    Then there exists a constant $m_*=m_*(\varepsilon)>0$ independent of $f$ and a scale $\mu>0$ depending on $f$ such that $m(\mu)\geq m_*.$
\end{lemma}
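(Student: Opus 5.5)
The plan is to apply Lemma \ref{lemma:two scale} at a single, well-chosen scale, namely the \emph{median} scale of the $L^{3/2}$-mass of $f$. The idea is that if $f$ had very little mass in the annulus $\{\mu\le|x|\le\mu/\sigma\}$, then about half of its mass would lie inside the ball of radius $\mu$ and about half outside the ball of radius $\mu/\sigma$. Lemma \ref{lemma:two scale} forbids exactly this two-scale splitting.

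\textbf{Step 1: choosing the median scale.} Since $f^{3/2}\in L^1(\R^2)$, the function $G_f(r)=\int_{|x|\le r}f^{3/2}$ is continuous and non-decreasing on $(0,\infty)$. By dominated convergence it satisfies $G_f(r)\to 0$ as $r\to0$ and $G_f(r)\to\|f\|_{3/2}^{3/2}=1$ as $r\to\infty$. By the intermediate value theorem there is a scale $\mu>0$, depending on $f$, with $G_f(\mu)=1/2$.

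\textbf{Step 2: applying Lemma \ref{lemma:two scale} at $r=\mu$.} Write $m:=m(\mu)=G_f(\mu/\sigma)-G_f(\mu)$, so that $1-G_f(\mu/\sigma)=1/2-m$. Inequality \eqref{eq:two scale} at $r=\mu$ then reads
\[
\tfrac12\left(\tfrac12-m\right)\le 2\varepsilon+c\,m^{2/3}.
\]
Since $\varepsilon<10^{-2}$, this gives
\[
\tfrac{m}{2}+c\,m^{2/3}\ \ge\ \tfrac14-2\varepsilon\ >\ \tfrac14-\tfrac1{50}=\tfrac{23}{100}.
\]

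\textbf{Step 3: defining $m_*$.} The function $h(m):=m/2+c\,m^{2/3}$ is continuous and strictly increasing on $[0,\infty)$, with $h(0)=0$. Let $m_*>0$ be the unique solution of $h(m_*)=23/100$. Because $h(m(\mu))>23/100=h(m_*)$, monotonicity gives $m(\mu)\ge m_*$. The constant $m_*$ depends only on the absolute constant $c$, and hence not on $f$ (nor, in fact, on $\varepsilon$ in the range $(0,10^{-2})$). This is what the lemma requires.

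\textbf{Main obstacle.} The only delicate point is checking that the hypotheses of Lemma \ref{lemma:two scale} hold, since all the analytic content sits there. That lemma is stated under the strict inequality $c\sigma^{1/3}<\varepsilon$, whereas here we have $c\sigma^{1/3}\le\varepsilon$. I would handle this in one of two ways. The first option is to replace $\sigma$ by a slightly smaller value, which only enlarges the annulus and so only increases $m(\mu)$. The second option is to observe that the proof of Lemma \ref{lemma:two scale} uses this hypothesis only in the non-strict form.
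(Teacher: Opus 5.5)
Your proof is correct, and it is more direct than the paper's. The paper argues by contradiction. It assumes $m(r)\le(\varepsilon/c)^{3/2}$ at every scale and applies Lemma~\ref{lemma:two scale} at all $r>0$ to get $G_f(r)\bigl(1-G_f(r/\sigma)\bigr)\le 3\varepsilon$ for every $r$. It then locates the crossing scale $\mu:=\sup\{r>0: G_f(r)<\sqrt{3\varepsilon}\}$ and uses continuity of $G_f$ on both sides of $\mu$ to show $m(\mu)\ge 1-2\sqrt{3\varepsilon}$, which is the contradiction. That route gives only $m_*=(\varepsilon/c)^{3/2}$, which degenerates as $\varepsilon\to0$, and it does not identify the scale carrying the mass.

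You instead apply Lemma~\ref{lemma:two scale} once, at the median scale $G_f(\mu)=1/2$. This is exactly where $G_f(r)\bigl(1-G_f(r/\sigma)\bigr)$ would be near its maximal value $1/4$ if the annulus were nearly empty. This removes both the contradiction and the supremum argument. It also gives an explicit scale depending only on $f$, and an $m_*$ uniform in $\varepsilon\in(0,10^{-2})$. Nothing in the later use of the lemma in Theorem~\ref{thm:rescaled convergence} requires more than your version provides.

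On the strict versus non-strict hypothesis, your second option is the correct one. In the proof of Lemma~\ref{lemma:two scale} the condition enters only through $\frac{12C+6C^2}{\|\cR\|^3}\sigma^{1/3}<c\sigma^{1/3}\le\varepsilon$, so $c\sigma^{1/3}\le\varepsilon$ suffices.

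Your first option, as you justify it, points the wrong way. Write $m_{\sigma'}$ for the annular mass with $\sigma'<\sigma$ in place of $\sigma$. Shrinking $\sigma$ enlarges the annulus, so a lower bound on $m_{\sigma'}(\mu)$ gives, by monotonicity alone, no lower bound on the smaller quantity $m_\sigma(\mu)$. It can be repaired in either of two ways:
\begin{itemize}
\item Your $\mu$ and $m_*$ do not depend on $\sigma$, and $G_f$ is continuous, so $m_{\sigma'}(\mu)\to m_\sigma(\mu)$ as $\sigma'\uparrow\sigma$ and the bound passes to the limit.
\item You can instead enlarge $\varepsilon$ slightly within $(0,10^{-2})$, which keeps $\tfrac14-2\varepsilon>\tfrac{23}{100}$.
\end{itemize}
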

\begin{proof}
    Suppose for a contradiction that $m(r)\leq \frac{\varepsilon^{3/2}}{c^{3/2}}$ for all $r>0$, where $c$ is the constant from Lemma \ref{lemma:two scale}. Then, Lemma \ref{lemma:two scale} yields
    \begin{equation}\label{eq:dichotomy}
        G_f(r)(1-G_f(r/\sigma))\leq 2\varepsilon+m(r)^{2/3}\leq 3\varepsilon,
    \end{equation}
    for all $r>0$. Let 
    \[\mu:=\sup\left\{r>0: G_f(r)<\sqrt{3\varepsilon}\right\}.\]
    Since $\lim_{r\to \infty}G_f(r)=\int_{\R^2}f^{3/2}=1,$ we must have $\mu<\infty$. Thus, for all $r>\mu$ we have $G_f(r)\geq\sqrt{3\varepsilon}$ and \eqref{eq:dichotomy} implies
    \[G_f(r/\sigma)>1-\sqrt{3\varepsilon},\]
    for all $r>\mu$. Since $G_f(r)$ is continuous in $r$, taking the limits as $r\to\mu^-$ and $r\to \mu^+$ yields
    \[G_f(\mu)\leq \sqrt{3\varepsilon},\qquad G_f(\mu/\sigma)\geq1-\sqrt{3\varepsilon},\]
    respectively and thus
    \[m(\mu)=G_f(\mu/\sigma)-G_f(\mu)\geq 1-2\sqrt{3\varepsilon},\]
    which contradicts $m(\mu)\leq \varepsilon^{3/2}$, since $\varepsilon<1/100$.
\end{proof}

We now prove Theorem \ref{thm:rescaled convergence}. Since $\Phi(f(t))$ is non-decreasing, we have 
\[\Phi(f(t))^3\geq\|\cR\|^3(1-\varepsilon),\]
for all $t\geq 0$. Thus, by Lemma \ref{lemma:mass trapping}, there exists a constant $M_*>0$ with the following property: for each $t>0$ there exists a scale $\mu_t>0$ such that 
\[m(\mu_t)=\int_{\mu_t\leq |x|\leq \mu_t/\sigma} f(x,t)^{3/2}dx\geq M_*. \]
Let $F(x,t):=\mu_t^{4/3}f_{\mu_t}(x,t)$. By Lemma \ref{le:norm invariance} we have
\[\|F(t)\|_{L^{3/2}(\R^2)}=\|\mu_t^{4/3}f_{\mu_t}(t)\|_{L^{3/2}(\R^2)}=\|f(t)\|_{L^{3/2}(\R^2)}=\|f_0\|_{L^{3/2}(\R^2)}, \]
and
\begin{align}\label{eq:mass trapping}
   \nonumber\int_{1\leq|x|\leq\sigma^{-1}}F(x,t)^{3/2}dx&=\int_{1\leq|x|\leq\sigma^{-1}}\mu_t^{2}f(\mu_tx,t)^{3/2}dx\\
   &=\int_{\mu_t\leq|x|\leq\mu_t/\sigma}f(x,t)^{3/2}dx>M_*.
\end{align}
Inequality \eqref{eq:mass trapping} states a key property of the rescaled sequence $F(t)$: a \emph{non-zero constant proportion} of the $L^{3/2}$-mass is trapped inside the fixed annulus $1\leq|x|\leq\sigma^{-1}$ for all times $t\geq 0$. As we will show, this property ensures that  $F(t)$ cannot concentrate or spread to $0$. We have
\[\|AF(t)\|_{H^{1/2,3/2}([a,b])}\lesssim_{a,b}\|F(t)\|_{L^{3/2}(\R^2)}^2, \]
and
\[\|AF(t)^2-\lambda(F(t))F(t)\|_{L^{3/2}(\R^2)}= \|Af(t)^2-\lambda(f(t))f(t)\|_{L^{3/2}(\R^2)},\]
for all $t\geq0,\ b>a>0$. To justify the last identity, we simply remark that for any $\mu>0$ and $g\geq0$ we have
\[A(\mu^{4/3}g_\mu)=\mu^{2/3}(Ag)_\mu,\qquad \lambda(\mu^{4/3}g_\mu)=\lambda(g).\]
The reader may verify that the proof of Theorem \ref{thm:raw convergence} applies verbatim to $F(t)$ and thus yields a sequence of times $t_n$ and a non-negative radial function $F_\infty\in L^{3/2}(\R^2)$ with $\|F_\infty\|_{L^{3/2}(\R^2)}\leq 1$ such that $$F_n:=F(t_n)\to F_\infty$$ in $L^{3/2}(S_{a,b})$ for all $0<a<b$. Furthermore, property \eqref{eq:mass trapping} yields
\[\int_{1\leq|x|\leq \sigma^{-1}}F_\infty(x)^{3/2}dx=\lim_{n\to \infty}\int_{1\leq|x|\leq\sigma^{-1}} F_n(x)^{3/2}dx\geq M_*>0, \]
and thus $F_\infty\neq 0$.\\\\
It remains to show that $(AF_n)^2\to (AF_\infty)^2$ in $L^{3/2}(a\leq|x|\leq b)$. By Cauchy--Schwarz,
\[\|(AF_n)^2-(AF_\infty)^2\|_{L^{3/2}(a\leq|x|\leq b)}\lesssim_{a,b}\|AF_n+AF_\infty\|_{L^{3}(a\leq|x|\leq b)} \|AF_n-AF_\infty\|_{L^{3}(a\leq|x|\leq b)}.\]
Moreover,
\begin{align*}
    \|AF_n+AF_\infty\|_{L^{3}(a\leq|x|\leq b)}^3\lesssim\|F_n\|_{L^{3/2}(\R^2)}+\|F_\infty\|_{L^{3/2}(\R^2)}\leq 2.
\end{align*}
Hence, it will suffice to show $AF_n\to AF_\infty$ in $L^{3}(a\leq|x|\leq b)$ for all $0<a<b$.\\\\
Fix $0<a<b$. We first prove a useful identity. For $f,g\geq0$ we have
\[Af-Ag=\cR^*\left[(\cR f)^2-(\cR g)^2\right]=\cR^*\left[ \cR(f+g)\, \cR(f-g)\right]=B(f+g,f-g),\]
where $B(f,g)=\cR^*(\cR f\, \cR g)$ is defined in Lemma \ref{lemma:Cauchy-Schwarz}. Furthermore, by Lemma \ref{lemma:Cauchy-Schwarz} and standard Cauchy--Schwarz, we have
\begin{align*}
    \|AF_n-AF_\infty\|_{L^{3}(a\leq|x|\leq b)}&=\|B(F_n+F_\infty,F_n-F_\infty)\|_{L^{3}(a\leq|x|\leq b)} \\
    &\leq \|A(F_n+F_\infty)^{1/2}A(F_n-F_\infty)^{1/2}\|_{L^{3}(a\leq|x|\leq b)}\\
    &\leq\|A(F_n+F_\infty)\|_{L^{3}(a\leq|x|\leq b)}^{1/2}\|A(F_n-F_\infty)\|_{L^{3}(a\leq|x|\leq b)}^{1/2}.
\end{align*}
The first factor on the right-hand side of the last inequality is bounded. Indeed, we have
\begin{align*}
    \int_{a\leq |x|\leq b}A(F_n+F_\infty)(x)^{3}dx
    \lesssim_a\|A(F_n+F_\infty)\|_{L^{3}(\R^2)}^{3}\lesssim\|F_n+F_\infty\|_{L^{3/2}(\R^2)}^{3}.
\end{align*}
Hence, it will suffice to show that
\[\|A(F_n-F_\infty)\|_{L^{3}(a\leq|x|\leq b)}\to 0.\]
To that end, let $R>0$ be a large radius satisfying $R>2b$ and $R^{-1}<a/2$ (but otherwise free to choose later) and let $h_n:=F_n-F_\infty$. We write
\begin{align*}
    h_n&= h_n\chi_{|x|< R^{-1}}+ h_n\chi_{R^{-1}\leq |x|\leq R}+ h_n\chi_{|x|>R}\\
    &=:h_n^{(1)}+h_n^{(2)}+h_n^{(3)}.
\end{align*}
Again by Lemma \ref{lemma:Cauchy-Schwarz}, we have
\begin{align*}
    |Ah_n|&=\left|\cR^*\left(\left[\cR h_n^{(1)}+\cR h_n^{(2)} +\cR h_n^{(3)}\right]^2\right)\right|\\
    &\leq\sum_{i=1}^3A(h_n^{(i)})+2\sum_{i< j}\left|\cR^*\left(\cR h_n^{(i)}\cR h_n^{(j)}\right)\right|\\
    &\leq\sum_{i=1}^3A(h_n^{(i)})+2\sum_{i<j}A(h_n^{(i)})^{1/2}A(h_n^{(j)})^{1/2},
\end{align*}
pointwise a.e. Therefore,
\begin{align*}
    \|A|h_n|\|_{L^{3}(a\leq|x|\leq b)}&\leq \sum_{i=1}^3 \|A|h_n^{(i)}|\,\|_{L^{3}(a\leq|x|\leq b)}\\
    &\qquad+2\sum_{i<j}\|A(|h_n^{(i)}|)^{1/2}A(|h_n^{(j)}|)^{1/2}\|_{L^{3}(a\leq|x|\leq b)}\\
    &\leq \sum_{i=1}^3 \|A|h_n^{(i)}|\,\|_{L^{3}(a\leq|x|\leq b)}\\
    &\qquad+2\sum_{i<j}\|A|h_n^{(i)}|\,\|_{L^{3}(a\leq|x|\leq b)}^{1/2}\|A|h_n^{(j)}|\,\|_{L^{3}(a\leq|x|\leq b)}^{1/2},
\end{align*}
and thus it will suffice to show
\begin{equation}\label{eq:A error vanish}
    \sum_{i=1}^3 \|A|h_n^{(i)}|\,\|_{L^{3}(a\leq|x|\leq b)}\to 0,\ \mbox{ as }\ n\to \infty.
\end{equation}
Since $  \supp(h_n^{(1)})\subseteq\{|x|<R^{-1}\}$, we have $\mathrm{supp}[\cR(h_n^{(1)})]\subseteq[-R^{-1},R^{-1}]$. Hence, for $r\in[a,b]$ we have
\begin{align*}
    A(|h_n^{(1)}|)(r)&=4\int_0^{r}\frac{\cR (|h_n^{(1)}|)(s)^2}{\sqrt{r^2-s^2}}ds=4\int_0^{R^{-1}}\frac{\cR (|h_n^{(1)}|)(s)^2}{\sqrt{r^2-s^2}}ds\\
    &\lesssim \frac{1}{a}\int_0^{R^{-1}}\cR (|h_n^{(1)}|)(s)^2ds
    \lesssim_a \left(\int_0^{R^{-1}} \cR(|h_n^{(1)}|)(s)^3 ds\right)^{2/3} R^{-1/3}\\
    &\lesssim\|h_n^{(1)}\|_{L^{3/2}(\R^2)}^2R^{-1/3}\leq\|F_n-F_\infty\|_{L^{3/2}(\R^2)}^2R^{-1/3}\lesssim R^{-1/3}.
\end{align*}
Hence, for the first term in \eqref{eq:A error vanish} we have
\begin{equation}\label{eq:h_1 small}
    \|A(h_n^{(1)})\|_{L^{3}(a\leq|x|\leq b)}\lesssim_{a,b}R^{-1/3}.
\end{equation}
Since $\supp(h_n^{(3)})\subseteq[R,\infty)$, for $s\in[a,b]\subseteq[0,R/2]$ we have
\begin{align*}
    \cR( h_n^{(3)})= 2\int_s^\infty\frac{r h_n^{(3)}(r)}{\sqrt{r^2-s^2}}dr=2\int_R^\infty\frac{r h_n^{(3)}(r)}{\sqrt{r^2-s^2}}dr,
\end{align*}
and by H\"older's inequality with respect to the measure $r\, dr$, we have
\begin{align*}
    \cR (|h_n^{(3)}|)(s)&\lesssim\left(\int_R^\infty |h_n^{(3)}(r)|^{3/2}r\,dr\right)^{2/3}\left(\int_R^\infty\frac{r\,dr}{(r^2-s^2)^{3/2}}\right)^{1/3}\\
    &=\left(\int_R^\infty |h_n^{(3)}(r)|^{3/2}r\,dr\right)^{2/3}\left(\frac{1}{2}\int_{R^2}^\infty\frac{du}{(u-s^2)^{3/2}}\right)^{1/3}\\
    &\leq \|F_n-F_\infty\|_{L^{3/2}(\R^2)}\left(\left[-(u-s^2)^{-1/2}\right]_{u=R^2}^{u=\infty}\right)^{1/3}\\
    &=\|F_n-F_\infty\|_{L^{3/2}(\R^2)}\frac{1}{(R^2-s^2)^{1/6}}\lesssim R^{-1/3}.
\end{align*}
Hence, for $r\in[a,b]$ we have
\begin{align*}
    A(|h_n^{(3)}|)(r)=4\int_0^{\pi/2}\cR (|h_n^{(3)}|)(r\cos\theta)^2\,d\theta\lesssim R^{-2/3},
\end{align*}
which yields
\begin{equation}\label{eq:h_3 small}
    \|A(|h_n^{(3)}|)\|_{L^{3}(a\leq|x|\leq b)}\lesssim_{a,b}R^{-2/3}.
\end{equation}
For the second term in \eqref{eq:A error vanish}, we have
\begin{align*}
    \int_{a\leq|x|\leq b}A(|h_n^{(2)}|)(x)^{3}dx
    \leq\|A(|h_n^{(2)}|)\|_{L^3(\R^2)}^{3}\lesssim\|h_n^{(2)}\|_{L^{3/2}(\R^2)}^{3},
\end{align*}
and by definition,
\begin{align*}
     \|h_n^{(2)}\|_{L^{3/2}(\R^2)}^{3/2}=\|F_n-F_\infty\|_{L^{3/2}(R^{-1}\leq|x|\leq R)}^{3/2}.
\end{align*}
Hence
\begin{equation}\label{eq:h_2 small}
    \|A(|h_n^{(2)}|)\|_{L^{3}(a\leq|x|\leq b)}\lesssim\|F_n-F_\infty\|_{L^{3/2}(R^{-1}\leq |x|\leq R)}.
\end{equation}
Now let $\eta>0$. By \eqref{eq:h_1 small} and \eqref{eq:h_3 small}, there exists $R$ depending on $\eta,a,b$ such that 
\[\|A(|h_n^{(1)}|)\|_{L^{3}(a\leq|x|\leq b)}+\|A(|h_n^{(3)}|)\|_{L^{3}(a\leq|x|\leq b)}<\frac{\eta}{2},\]
for all $n\in\mathbb{N}$. Since $\lim_{n\to \infty}\|F_n-F_\infty\|_{L^{3/2}(R^{-1}\leq|x|\leq R)}=0$ (the sequence $F_n$ is independent of $R$) and in view of \eqref{eq:h_2 small} (for the chosen, fixed value of $R$), there exists $n_0\in\mathbb{N}$ such that for all $n\geq n_0$ we have
\[\|A(|h_n^{(2)}|)\|_{L^{3}(a\leq|x|\leq b)}<\frac{\eta}{2}.\]
Therefore, \eqref{eq:A error vanish} is established.
\qed\\\\

We are ready to show that the limit $F_\infty$ in the conclusion of Theorem \ref{thm:rescaled convergence} is a critical point of $\Phi$.
\begin{theorem}\label{thm:limits are extremizers}
    Suppose $0\neq F_\infty\in L^{3/2}(\R^2)$ is as in the conclusion of Theorem \ref{thm:rescaled convergence}. Then $F_\infty\geq0$ satisfies
    \begin{equation}\label{eq:extr_id omega}
        A(F_\infty)=\Phi( F_\infty)^3\|F_\infty\|_{3/2}^{3/2} F_\infty^{1/2}\ \mbox{ a.e.}
    \end{equation}
    In particular, by Proposition \ref{prop:extremisers}, $F_\infty$ is a critical point of $\Phi$. Furthermore, we have $$\|F_\infty\|_{3/2}\geq\|f_0\|_{3/2}( 1-\varepsilon)^{2/3},$$ where $\varepsilon>0$ is the same as in the statement of Theorem \ref{thm:rescaled convergence}.
\end{theorem}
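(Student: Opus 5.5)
The plan is to pass to the limit in the Euler--Lagrange flow along the sequence $F_n=\mu_n^{4/3}f_{\mu_n}(t_n)$ from Theorem \ref{thm:rescaled convergence}, annulus by annulus, and then identify the Lagrange multiplier by testing against $F_\infty$. Throughout, $\|f_0\|_{3/2}=1$, so $\|F_n\|_{3/2}=1$ for all $n$. By the scaling identities $A(\mu^{4/3}g_\mu)=\mu^{2/3}(Ag)_\mu$ and $\lambda(\mu^{4/3}g_\mu)=\lambda(g)$ recorded in the proof of Theorem \ref{thm:rescaled convergence}, together with Proposition \ref{prop:extr_conv}, we have
\[
\|(AF_n)^2-\lambda(f(t_n))F_n\|_{L^{3/2}(\R^2)}=\|\partial_t f(t_n)\|_{3/2}\to 0 .
\]
By Proposition \ref{prop:lambda monotone}, $\lambda(f(t))$ is non-decreasing, and it is bounded by \eqref{eq:lambda upper bound}. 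Hence $\lambda(f(t_n))\to\lambda_\infty\in(0,\infty)$.

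First, fix $0<a<b$. On $S_{a,b}$, Theorem \ref{thm:rescaled convergence} gives $(AF_n)^2\to(AF_\infty)^2$ and $F_n\to F_\infty$ in $L^{3/2}(S_{a,b})$. Combining this with the display above yields
\[
(AF_\infty)^2=\lambda_\infty F_\infty \quad \text{a.e. on } S_{a,b}.
\]
Exhausting $\R^2\setminus\{0\}$ by countably many annuli gives $(AF_\infty)^2=\lambda_\infty F_\infty$ a.e. on $\R^2$. Since $F_\infty\ge0$ and $AF_\infty\ge0$, this is equivalent to $AF_\infty=\lambda_\infty^{1/2}F_\infty^{1/2}$ a.e. In particular $\cG(F_\infty,\lambda_\infty)=0$, so Proposition \ref{prop:extremisers} already shows that $F_\infty$ is a critical point of $\Phi$.

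Second, I identify $\lambda_\infty$. Multiply $AF_\infty=\lambda_\infty^{1/2}F_\infty^{1/2}$ by $F_\infty$ and integrate. Duality gives $\int F_\infty\,AF_\infty=\int(\cR F_\infty)^3=\Phi(F_\infty)^3\|F_\infty\|_{3/2}^3$, while the right-hand side integrates to $\lambda_\infty^{1/2}\|F_\infty\|_{3/2}^{3/2}$. Since $F_\infty\neq0$, we get $\lambda_\infty^{1/2}=\Phi(F_\infty)^3\|F_\infty\|_{3/2}^{3/2}$, which is \eqref{eq:extr_id omega}. All integrals here are finite by \eqref{eq:A bound} and H\"older.

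Third, for the norm bound I need a lower bound on $\lambda_\infty$. For each $t$, duality and Cauchy--Schwarz give
\[
\Phi(f(t))^3=\int f\,Af=\int f^{3/4}\cdot f^{1/4}Af\le\Big(\int f^{3/2}\Big)^{1/2}\Big(\int f^{1/2}(Af)^2\Big)^{1/2}=\lambda(f(t))^{1/2}.
\]
Monotonicity of $\Phi$ along the flow and \eqref{eq:trapping} then give $\lambda_\infty^{1/2}\ge\|\cR\|^3(1-\varepsilon)$. On the other hand, $\Phi(F_\infty)\le\|\cR\|$, so $\lambda_\infty^{1/2}=\Phi(F_\infty)^3\|F_\infty\|_{3/2}^{3/2}\le\|\cR\|^3\|F_\infty\|_{3/2}^{3/2}$. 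Comparing the two bounds gives $\|F_\infty\|_{3/2}^{3/2}\ge1-\varepsilon$, that is, $\|F_\infty\|_{3/2}\ge(1-\varepsilon)^{2/3}$.

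The main obstacle is the first step: passing to the limit while only having local convergence on annuli. The key point is that $\lambda(f(t_n))$ is a scalar with a genuine limit, and the error term tends to $0$ in the global $L^{3/2}$ norm. These two facts make the annulus-by-annulus identification legitimate without any tightness at $0$ or $\infty$. The global identities in the second and third steps need only $F_\infty\in L^{3/2}(\R^2)$, which Theorem \ref{thm:rescaled convergence} provides.
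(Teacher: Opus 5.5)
Your proposal is correct and follows the paper's proof essentially step for step. You identify $(AF_\infty)^2=\lambda_\infty F_\infty$ annulus by annulus, using the global decay of $\|\partial_t f(t_n)\|_{3/2}$ and the convergence of the monotone bounded multiplier. You then recover $\lambda_\infty^{1/2}=\Phi(F_\infty)^3\|F_\infty\|_{3/2}^{3/2}$ by testing against $F_\infty$, and bound $\lambda_\infty$ from below via Cauchy--Schwarz. The only cosmetic difference is in that last step: you apply Cauchy--Schwarz at every time $t$ together with monotonicity of $\Phi$, while the paper applies it at $t=0$ together with monotonicity of $\lambda(f(t))$; both give $\lambda_\infty^{1/2}\ge\|\cR\|^3(1-\varepsilon)$.
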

\begin{proof}
    From Theorem \ref{thm:rescaled convergence}, there exists $t_n\to \infty$ such that $$F_n:=F(t_n)\to F_\infty,\ AF_n^2\to AF_\infty^2,$$ in $L^{3/2}(S_{a,b})$ for all $0<a<b$. 
    Furthermore, from Proposition \ref{prop:extr_conv} we know that for all $0<a<b$
    \begin{align*}
        \|AF_n^2-\lambda(F_n)F_n\|_{L^{3/2}(S_{a,b})}&\leq\|AF_n^2-\lambda(F_n)F_n\|_{L^{3/2}(\R^2)}\\
        &= \|Af(t_n)^2-\lambda(f(t_n))f(t_n)\|_{L^{3/2}(\R^2)}\ \to 0,
    \end{align*} 
    as $n\to \infty$. Since $\lambda(F_n)=\lambda(f(t_n))$ is non-decreasing and bounded, it converges to some $\lambda_\infty>0$. Thus, by uniqueness of the limit in $L^{3/2}(S_{a,b})$, we get
    \[AF_\infty^2=\lambda_\infty F_\infty,\ \mbox{ a.e. in}\ S_{a,b},\]
    for all $0<a<b$. Since $F_\infty$ is non-negative, we have
    \begin{equation}\label{eq:EL for F_infty}
        AF_\infty=\lambda_\infty^{1/2}F_\infty^{1/2}.
    \end{equation}
     We shall show that 
     \begin{equation}\label{eq:lambda limit}
         \lambda_\infty^{1/2}=\Phi(F_\infty)^3\|F_\infty\|_{3/2}^{3/2}=\frac{\|\cR F_\infty\|_3^3}{\|F_\infty\|_{3/2}^{3/2}},
     \end{equation} 
     which will conclude the proof. We multiply both sides of \eqref{eq:EL for F_infty} by $F_\infty$ to get
     \begin{gather}\label{eq:EL for F_infty}
         F_\infty AF_\infty=\lambda_\infty^{1/2}F_\infty^{3/2},
     \end{gather}
     and integrate to obtain
     \[\int_{\R^2}F_\infty AF_\infty=\lambda_\infty^{1/2}\int_{\R^2} F_\infty^{3/2}.\]
     By the adjoint property and \eqref{eq:EL for F_infty}, we have
     \[\int F_\infty AF_\infty=\int\left(\cR F_\infty\right)^3,\]
     which establishes \eqref{eq:lambda limit}.\\\\
     Lastly, to show $\|F_\infty\|_{3/2}\geq 1-\varepsilon,$ we use \eqref{eq:lambda limit}, the monotonicity of $\lambda(t)$ and H\"older's inequality:
     \[
        \int (\cR f_0)^3 = \int f_0 Af_0\leq \left(\int f_0^{3/2}\right)^{1/2}\left(\int f_0^{1/2}(Af_0)^2\right)^{1/2},
     \]
     to get
     \begin{align}
         \|\cR\|^3\|F_\infty\|_{3/2}^{3/2}\geq\Phi(F_\infty)^3\|F_\infty\|_{3/2}^{3/2}=\lambda_\infty^{1/2}&\geq \lambda(0)^{1/2}\\
         &\geq \Phi(0)^3\left(\int f_0^{3/2}\right)\nonumber\\
         &\geq\|\cR\|^3\left(\int f_0^{3/2}\right)(1-\varepsilon).\nonumber
     \end{align}
\end{proof}

\section{Critical Points of $\Phi$ and the Euler--Lagrange Equation}\label{section:critical points}
We first prove Proposition \ref{prop:extremisers}, which asserts that non-negative critical points of $\Phi$ are exactly the non-trivial solutions of \eqref{eq:Euler--Lagrange 2}. In fact, we prove the following proposition, which directly implies Proposition \ref{prop:extremisers} and explicitly defines the arbitrary multiplier $\lambda$ that appears in the statement of Proposition \ref{prop:extremisers} (as well as in \cite{Christ} and \cite{Christ-Xue}) in terms of $f$.
\begin{proposition}\label{prop:extremisers2}
    A function $f\in L^{3/2}(\R^2)$ is a critical point of $\Phi$ if and only if it satisfies
    \begin{equation}
        \cR^*\left(|\cR f|\, \cR f\right)=\Phi(f)^3\|f\|_{3/2}^{3/2}|f|^{1/2}\mathrm{sgn}(f).
    \end{equation}
    In particular, $f$ is an extremiser of $\Phi$ if and only if
    \begin{equation}
        \cR^*\left(| \cR f|\,\cR f\right)=\|\cR\|^3\|f\|_{3/2}^{3/2} |f|^{1/2}\mathrm{sgn}(f).
    \end{equation}
\end{proposition}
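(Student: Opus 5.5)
We compute the Gâteaux derivative of $\Phi$ directly and then identify the multiplier by testing against $f$ itself. Fix $0\neq f\in L^{3/2}(\R^2)$ and a smooth compactly supported $\eta$. Write $N(f):=\int|\cR f|^3$ and $D(f):=\int|f|^{3/2}$, so that $\Phi(f)^3=N(f)/D(f)^2$. Since $s\mapsto|s|^p$ is $C^1$ for $p>1$ with derivative $p|s|^{p-1}\mathrm{sgn}(s)$, and by dominated convergence (the difference quotients are controlled by H\"older together with \eqref{eq:Oberlin-Stein}), we get
\[
\frac{d}{d\varepsilon}\Big|_{\varepsilon=0}N(f+\varepsilon\eta)=3\int|\cR f|\,\cR f\,\cR\eta=3\int \cR^*\left(|\cR f|\,\cR f\right)\eta ,
\]
using the duality $\int g\,\cR\eta=\int \cR^*g\,\eta$, valid since $|\cR f|\cR f\in L^{3/2}$. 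Similarly,
\[
\frac{d}{d\varepsilon}\Big|_{\varepsilon=0}D(f+\varepsilon\eta)=\tfrac32\int|f|^{1/2}\mathrm{sgn}(f)\,\eta .
\]
Because $\Phi>0$ near $f$ (if $\cR f=0$ the claim is degenerate), $\Phi$ is critical iff $\Phi^3$ is. The quotient rule then gives
\[
\frac{d}{d\varepsilon}\Big|_{\varepsilon=0}\Phi(f+\varepsilon\eta)^3=\frac{3}{D(f)^2}\int\Big[\cR^*\left(|\cR f|\cR f\right)-\frac{N(f)}{D(f)}|f|^{1/2}\mathrm{sgn}(f)\Big]\eta .
\]
Finally, $N(f)/D(f)=\Phi(f)^3 D(f)=\Phi(f)^3\|f\|_{3/2}^{3/2}$.

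The bracket lies in $L^3(\R^2)$: the first term by \eqref{eq:A bound}, the second because $|f|^{1/2}\in L^3$. It therefore defines a distribution. The derivative vanishes for all test functions $\eta$ iff this $L^3$ function is zero a.e., by density of $C_c^\infty$ in $L^{3/2}$, which is the dual of $L^3$. This is exactly the displayed identity, and it proves both directions at once.

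For Proposition \ref{prop:extremisers}, observe that if $Af=\kappa|f|^{1/2}\mathrm{sgn}(f)$ with $\kappa\ge0$, then squaring gives $(Af)^2=\kappa^2|f|$, i.e.\ $\cG(f,\kappa^2)=0$. Conversely, if $(Af)^2=\lambda|f|$ for some $\lambda$, we must determine the sign pattern and identify $\lambda$. Pairing $Af$ with $f$ yields $\int fAf=N(f)$ in general; in the non-negative setting, $Af\ge0$ and $Af=\lambda^{1/2}f^{1/2}$, and integrating against $f$ forces $\lambda^{1/2}=N(f)/D(f)$, as in the proof of Theorem \ref{thm:limits are extremizers}. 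For signed $f$, the squared equation loses sign information; this is the delicate point of the equivalence, and I would restrict to (or note that the paper intends) the non-negative case, where $\mathrm{sgn}(f)=1$ on the support.

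The extremiser statement follows because extremisers are critical points with $\Phi(f)=\|\cR\|$. Conversely, if the identity holds with constant $\|\cR\|^3$, pairing it with $f$ gives $N(f)=\|\cR\|^3D(f)^{2}$, i.e.\ $\Phi(f)=\|\cR\|$. The main technical obstacle I anticipate is the rigorous justification of differentiating under the integral for $N$. I would handle it via the mean value theorem, which gives the bound $\big||a+\varepsilon b|^3-|a|^3\big|\le3|\varepsilon||b|(|a|+|b|)^2$, integrable by H\"older.
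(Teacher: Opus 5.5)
Your proposal is correct and follows the paper's proof. You pass to $\Phi^3=N/D^2$, differentiate at $\varepsilon=0$ by the quotient rule, move $\cR$ across by duality, and conclude with the fundamental lemma of the calculus of variations; your dominated-convergence and $L^3$--$L^{3/2}$ duality remarks make explicit what the paper leaves implicit. The only deviation is in the extremiser clause: you argue that maximisers are critical points with $\Phi(f)=\|\cR\|$, and conversely that pairing the identity with $f$ gives $N(f)=\|\cR\|^3D(f)^2$, which works for signed $f$ directly and avoids the paper's appeal to Christ's positivity of extremisers.
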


\begin{proof}
    Since $\Phi(f)>0$ for $f\neq 0$ and every critical point is non-zero, $\Phi^3$ has the same critical points as $\Phi$. Let $\varepsilon>0$, let $\eta\in L^{3/2}$ be a smooth function such that $f+\varepsilon\eta\neq0$. We have
    \begin{gather*}
        \frac{d}{d\varepsilon}\Phi(f+\varepsilon\eta)^3=\frac{\frac{d}{d\varepsilon}\|\cR (f+\varepsilon\eta)\|_3^3\|f+\varepsilon\eta\|_{3/2}^3-\|\cR(f+\varepsilon\eta)\|_3^3\frac{d}{d\varepsilon}\|f+\varepsilon\eta\|_{3/2}^3}{\|f+\varepsilon\eta\|_{3/2}^6},
    \end{gather*}
    and therefore
     \begin{align*}         
        \|f+\varepsilon\eta\|_{3/2}^6& \frac{d}{d\varepsilon}\Phi(f+\varepsilon\eta)^3\\
        =&\, 3\int\big|\cR(f+\varepsilon\eta)\big|\cR(f+\varepsilon\eta)\frac{d}{d\varepsilon}\cR(f+\varepsilon\eta)\|f+\varepsilon\eta\|_{3/2}^3\\
        &-\|\cR (f+\varepsilon\eta)\|_3^3\, 2\int |f+\varepsilon \eta|^{3/2}\,\frac{3}{2}\int|f+\varepsilon\eta|^{1/2}\mathrm{sgn}(f+\varepsilon\eta)\frac{d}{d\varepsilon}(f+\varepsilon\eta)\\
        =&\, 3\int\big|\cR(f+\varepsilon\eta)\big| \cR(f+\varepsilon\eta) \cR(\eta)\|f+\varepsilon\eta\|_{3/2}^3\\
        &-{3}\|\cR(f+\varepsilon\eta)\|_3^3\int  |f+\varepsilon \eta|^{3/2}\int|f+\varepsilon\eta|^{1/2}\mathrm{sgn}(f+\varepsilon\eta)\eta.
    \end{align*}
    Setting $\varepsilon=0$ and simplifying yields
    \begin{align*}
        \|f+\varepsilon\eta&\|_{3/2}^{4+\frac{1}{2}}\frac{d}{d\varepsilon}\Bigg|_{\varepsilon = 0}\Phi(f+\varepsilon\eta)\\
        =&3\int\left(\cR^*\left(|\cR f|\cR f\right)\|f\|_{3/2}^{3/2}-\|\cR f\|_3^3|f|^{1/2}\mathrm{sgn}(f)\right)\eta.
    \end{align*}
    The function $f$ is a critical point of $\Phi$ if and only if
    \begin{gather}\label{eq:critical equation}
        \frac{d}{d\varepsilon}\Bigg|_{\varepsilon=0}\Phi(f+\varepsilon\eta)=0,\ \mbox{ for all smooth } \eta\in L^{3/2}\ \mbox{ such that } f+\varepsilon\eta\neq 0.
    \end{gather}
    By the Fundamental Theorem of Variational Calculus, \eqref{eq:critical equation} holds iff
    \begin{equation*}
        \cR^*\left(|\cR f|\, \cR f\right)\|f\|_{3/2}^{3/2}=\|\cR f\|_3^3|f|^{1/2}\mathrm{sgn}(f),
    \end{equation*}
    which rearranges to
    \begin{equation*}
        \cR^*\left(|\cR f|\, \cR f\right)=\Phi(f)^3\|f\|_{3/2}^{3/2}|f|^{1/2}\mathrm{sgn}(f).
    \end{equation*}

Christ \cite{Christ} showed that every $f\in\mathcal{E}$ is positive. Using this fact we get that every extremiser satisfies
\begin{gather}\label{eq:Euler--Lagrange}
    \cR^*\left[(\cR f)^2\right]=\|\cR\|^3\|f\|_{3/2}^{3/2} f^{1/2}.
\end{gather}
Lastly, we show that equation \eqref{eq:Euler--Lagrange} is not only necessary, but also a sufficient condition for a non-negative $f$ to be an extremiser. Indeed, suppose $f\geq0$ satisfies \eqref{eq:Euler--Lagrange}. We multiply both sides by $f$ and integrate to get
\[\int f\, \cR^*\left([\cR f]^2\right)=\|\cR\|^3\|f\|_{3/2}^3. \]
By the adjoint property,
\[\int f\cR^*\left([\cR f]^2\right)=\int \cR f(\cR f)^2=\int (\cR f)^3.\]
Hence, 
\[\frac{\|\cR f\|_3}{\|f\|_{3/2}}=\|\cR\|.\]

For non-negative $f$, equation \eqref{eq:Euler--Lagrange} may be rewritten as
\begin{equation}\label{eq:Euler--Lagrange 2}
    A(f)=\|\cR\|^3\|f\|_{3/2}^{3/2} f^{1/2}.
\end{equation}
By squaring both sides, multiplying by $f^{1/2}$, integrating and rearranging, we get
\[\frac{\int f^{1/2}Af^2}{\int f^{3/2}}=\|\cR\|^6\|f\|_{3/2}^3,\]
and thus for non-negative $f$, \eqref{eq:Euler--Lagrange} becomes
\[\cR^*([Rf]^2)^2=\lambda(f) f.\]
    
\end{proof}

We are now ready to prove Theorem \ref{thm:critical points}. The key result is the following.

\begin{proposition}\label{prop:A decay}
    Suppose $0\neq f\in C^\infty(\R^2) $ is positive, radial and
    \[\int_{\R^2}|x|^\kappa f(x)^{4/3}dx<\infty, \]
    for some $\kappa\in(8/15,2/3)$. Then $Af(r)\sim C(f) r^{-1}$ for $r\gtrsim 1$, where the constant $C(f)>0$ is independent of $r$.
\end{proposition}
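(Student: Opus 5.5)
We work with the radial formulas $Af(r)=4\int_0^r g(s)(r^2-s^2)^{-1/2}\,ds$, where $g:=(\cR f)^2$ and $\cR f(s)=2\int_s^\infty uf(u)(u^2-s^2)^{-1/2}\,du$. The point is that $Af(r)\approx 4r^{-1}\int_0^\infty g$ for large $r$. So the plan is to show $\|g\|_{L^1(\R_+)}=\|\cR f\|_{L^2}^2$ is finite and positive, and then control the error near $s\approx r$.

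\textbf{Step 1: $g\in L^1$ and $\int g>0$.} Finiteness should follow from the $L^{4/3}\to L^2$ bound for $\cR$ used in Lemma \ref{lemma:two scale} (Appendix \ref{ap:Radon bound}), provided $f\in L^{4/3}(\R^2)$. Near the origin this is free, since $f$ is smooth and hence bounded. At infinity, $\int_{|x|\ge1}f^{4/3}\le\int|x|^\kappa f^{4/3}<\infty$ because $\kappa>0$. Positivity is immediate, since $f>0$ gives $\cR f>0$.

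\textbf{Step 2: the lower bound.} For $r\ge1$ we have $(r^2-s^2)^{-1/2}\ge r^{-1}$ on $(0,r)$. Hence
\[
Af(r)\ge 4r^{-1}\int_0^r g \ge 4r^{-1}\int_0^1 g=:c_1r^{-1}, \qquad c_1>0.
\]

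\textbf{Step 3: the upper bound.} Split the integral at $s=r/2$. On $(0,r/2)$ we have $(r^2-s^2)^{-1/2}\le 2r^{-1}$, which gives a contribution $\le 8r^{-1}\|g\|_1$. On $(r/2,r)$ we have $(r^2-s^2)^{-1/2}\le r^{-1/2}(r-s)^{-1/2}$, so it suffices to show
\[
\int_{r/2}^r g(s)(r-s)^{-1/2}\,ds\lesssim r^{-1/2}.
\]
This requires pointwise decay of $\cR f$ at large $s$. Apply H\"older in $\cR f(s)=2\int_s^\infty uf(u)(u^2-s^2)^{-1/2}\,du$ with $f^{4/3}u^{\kappa}\,u\,du$ against the conjugate exponent $4$:
\[
\cR f(s)\lesssim \Big(\int_s^\infty u^{\kappa}f^{4/3}u\,du\Big)^{3/4}\Big(\int_s^\infty u^{1-3\kappa}(u^2-s^2)^{-2}\,du\Big)^{1/4}.
\]
The second factor diverges at $u=s$, since $(u^2-s^2)^{-2}$ is not integrable there. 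So this H\"older must be arranged differently: give the singular weight a smaller power, i.e.\ use exponents $p<4/3$ on $f$, or interpolate with the $L^{3/2}$ H\"older bound used in Lemma \ref{lemma:two scale}, which gave $\cR f_1(s)\lesssim \|f\chi_{|x|>s}\|_{3/2}\,s^{-1/3}$ by an analogous computation.

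Together with Step 1, this should give a bound of the form $\cR f(s)\lesssim s^{-\beta}$ with $\beta$ depending on $\kappa$, and then
\[
\int_{r/2}^r g(s)(r-s)^{-1/2}\,ds\lesssim r^{-2\beta}\,r^{1/2}.
\]
We need $2\beta\ge1$, which requires $\beta\ge1/2$. The admissible range $\kappa\in(8/15,2/3)$ presumably comes exactly from this exponent bookkeeping.

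A cleaner alternative is to avoid pointwise bounds. Apply H\"older on $(r/2,r)$ with $(r-s)^{-1/2}\in L^{q'}$ for some $q'<2$, and $g\in L^q(r/2,\infty)$ with a weighted tail estimate $\int_{s>r/2}g^q\lesssim r^{-\gamma}$. Obtain that tail estimate from a weighted Radon inequality that uses $\int|x|^\kappa f^{4/3}<\infty$, for instance via Stein--Weiss, since $\cR^*\cR=cI_1$ by \eqref{eq:fourier slice}.

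\textbf{Main obstacle.} The hard part is Step 3: getting enough decay of $\cR f$ (or of a weighted norm of $g$) from the single moment assumption to beat the $(r-s)^{-1/2}$ singularity at $s\approx r$. I would first attempt the mixed H\"older/interpolation estimate $\cR f(s)\lesssim s^{-1/2}$ and tune the exponents to see where the constraints $8/15<\kappa<2/3$ arise.
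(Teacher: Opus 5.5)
Your Steps 1 and 2, and the $(0,r/2)$ half of Step 3, are correct and agree with the paper. The gap is the one estimate that carries the proposition, $\int_{r/2}^r g(s)(r-s)^{-1/2}\,ds\lesssim r^{-1/2}$. You leave it as a list of options, and the main option cannot work.

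\emph{Your fallbacks for the pointwise route do not work.} Near $u=s$ the Abel kernel behaves like $u(u^2-s^2)^{-1/2}\approx (s/2)^{1/2}(u-s)^{-1/2}$. H\"older against it needs a dual exponent below $2$, i.e.\ $L^p$ control of $f$ with $p>2$, so passing to $p<4/3$ makes things worse, not better. The bound you quote from Lemma \ref{lemma:two scale} is a misreading. There $\mathrm{supp}\, f_1\subseteq\{|x|\ge r/\sigma\}$ while $|s|\le r$, and the factor $\sigma^{1/3}r^{-1/3}$ comes from that separation. For $f\chi_{|x|>s}$ the same computation gives $\int_s^\infty u(u^2-s^2)^{-3/2}\,du=\infty$.

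\emph{No pointwise decay of $\cR f$ follows from the hypotheses.} Take a smoothed shell of height $w^{-2/3}$ on $R\le|x|\le R+w$. Its $\|f\|_{3/2}$ and $\int|x|^\kappa f^{4/3}$ stay bounded as $w\to0$, yet $\cR f(R)\approx 2\sqrt{2R}\,w^{-1/6}\to\infty$. Placing such shells at radii $R_k\to\infty$ gives an admissible $f$ for which $\cR f$ does not decay. Moreover, even granting a sup bound, your estimate $\int_{r/2}^r g(s)(r-s)^{-1/2}ds\le\sqrt{2r}\,\sup_{[r/2,r]}g$ needs $\sup_{[r/2,r]}g\lesssim r^{-1}$, far more than is available.

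\emph{How the paper closes the gap.} It uses a regularity input that is not in the statement but holds for critical points by Theorem \ref{thm:Christ asymptotic}: $\nabla f\in L^{3/2}$. From $|(\cR f)'|\le\cR(|\nabla f|)$ it gets $g'\in L^{3/2}$, so $g$ is H\"older continuous of order $1/3$. Set $\theta=3\kappa/2$. The inequality $|s|^{\theta/2}\cR f\le\cR(|x|^{\theta/2}f)$ together with the $L^{4/3}\to L^2$ bound gives the tail estimate $\tau(r):=\int_{r/2}^\infty g\lesssim r^{-\theta}$. H\"older continuity converts this into $\omega(r):=\sup_{[r/2,r]}g\lesssim\|g'\|_{3/2}^{3/4}\tau(r)^{1/4}$.

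The decisive step is not a sup bound alone but the optimised split at $s=r-\delta$ with $\delta=\tau/(2\omega)$. This gives $\int_{r/2}^r g(s)(r-s)^{-1/2}ds\lesssim(\omega\tau)^{1/2}\lesssim\tau^{5/8}\lesssim r^{-5\theta/8}$. The condition $5\theta/8\ge 1/2$ is exactly $\kappa\ge 8/15$. The endpoint $2/3$ plays no role in the proof; it only ensures the hypothesis holds when $f\lesssim(1+|x|^2)^{-1}$.

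\emph{Your second route.} It could be completed, even without $\nabla f$. Write $\cR f(s)=K(s^2)$ with $K(\sigma)=\int_\sigma^\infty F(v)(v-\sigma)^{-1/2}dv$ and $F(v)=f(\sqrt v)$. For $a\ge0$ one has $\sigma^aK(\sigma)\le\int_\sigma^\infty v^aF(v)(v-\sigma)^{-1/2}dv$. Interpolating the moment hypothesis with $\|f\|_{3/2}$ puts $v^aF\in L^p$ for some $p$ slightly above $4/3$ and $a$ close to $3\kappa/8$. One-dimensional Hardy--Littlewood--Sobolev then gives $g\in L^q(r/2,r)$ with $q=p^*/2>2$, with enough tail decay for your H\"older step against $(r-s)^{-1/2}$. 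None of this is in the proposal, so as written the upper bound is unproved.
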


Let $f$ be a non-negative radial critical point of $\Phi$. Then by Proposition \ref{prop:extremisers2} we have $f\sim A(f)^2$. In order to prove the assertion of Theorem \ref{thm:critical points}, we need only verify the conditions of Proposition \ref{prop:A decay}. By combining Theorem \ref{thm:Christ asymptotic} and Remark \ref{remark:C infinity}, we have $f\in C^\infty(\R^2)$, $f>0$ and $$f(x)\lesssim(1+ |x|^{2})^{-1},\qquad |\nabla f(x)|\lesssim(1+|x|^{3})^{-1},$$ 
as $|x|\to \infty$. Thus all conditions of Proposition \ref{prop:A decay} automatically hold. In fact, the last one holds for all $\kappa\in(0,2/3)$.
\begin{remark}
    The argument does not actually need the full strength of Proposition \ref{prop:A decay}, but only the lower bound $Af(x)\gtrsim |x|^{-1}$ (which is in fact easier to prove than the upper bound, as the reader may observe in the proof that follows), as the upper bound on $f$ is guaranteed by Theorem \ref{thm:Christ asymptotic} and Remark \ref{remark:C infinity}. Nonetheless, Proposition \ref{prop:A decay} is of independent interest.
\end{remark}

\begin{proof}[Proof of Proposition \ref{prop:A decay}]
    We remind the reader that if $f$ is radial then so is $Af$. Moreover, $\cR f$ is constant in the spherical variable $\alpha$. Set $g(s):=\cR f(s)^2$. For $|x|=r$ we have
    \begin{gather*}
        Af(x)=\cR^*g(x)=\int_{\bbS^1} g(x\cdot \alpha)d\alpha=4\int_0^{\pi/2} g(r\cos\theta)d\theta\\
        =4\int_0^{\pi/3}g(r\cos\theta)d\theta+4\int_{\pi/3}^{\pi/2}g(r\cos\theta)d\theta.
    \end{gather*}
    For the second integral, we change variables to get
    \begin{gather*}
       \int_{\pi/3}^{\pi/2} g(r\cos\theta)d\theta=\int_{0}^{r/2} \frac{g(s)}{\sqrt{r^2-s^2}}ds\sim\frac{1}{r}\int_0^{r/2}g(s)ds.
    \end{gather*}
    By \eqref{eq:Radon 4/3->2} and since $\cR f$ is constant in $\alpha$, we have
    \begin{align*}
        \int_0^{r/2}\cR f(s)^2ds&\leq\int_\R \cR f(s)^2 ds
       \sim \|\cR f\|_{L^2(\bbS^1\times \R)}^2\lesssim\|f\|_{L^{4/3}(\R^2)}^2.
    \end{align*}
    Moreover, for $r\geq 1$ we have
    \begin{equation*}
        \int_0^{r/2}\cR f(s)^2ds\geq\int_0^{1/2}\cR f(s)^2ds>0,
    \end{equation*}
    since $f>0$.
    \\\\
    For the first integral, we write
    \[
        J(r):=\int_{0}^{\pi/3}g(r\cos\theta)d\theta
        = \int_{r/2}^r \frac{g(s)}{\sqrt{r^2-s^2}}ds.
    \]
    We observe that $g,g'\in L^{3/2}(\R).$ Indeed, 
    \[\int g(s)^{3/2}ds=\int \cR f(s)^3ds\leq\|\cR\|^3\|f\|_{3/2}^3,\]
    and 
    \[|g'(s)|=2\cR f(s)|(\cR f)'(s)|\leq 2\cR f(s)\cR(|f'|)(s),\]
    where the last inequality is justified as follows:
    \begin{align*}
        |(\cR f)'(s)|&=\left|\int_0^\infty f'\left(\sqrt{s^2+t^2}\right)\frac{s}{\sqrt{s^2+t^2}}dt\right|\leq\int_0^\infty|f'|\left(\sqrt{s^2+t^2}\right)dt\\
        &=\int_0^\infty|\nabla f|\left(\sqrt{s^2+t^2}\right)dt=\cR(|\nabla f|)(s).
    \end{align*}
    Therefore,
    \begin{align*}
        \|g'\|_{3/2}&\sim \|\cR fR(|f'|)\|_{3/2}\leq\|\cR f\|_{3}\|R(|f'|)\|_3\\
        &\leq\|f\|_{L^{3/2}(\R^2)}\|\nabla f\|_{L^{3/2}(\R^2)}.
    \end{align*} 
    Thus, $g$ is H\"older continuous with exponent $1/3$:
    \[|g(x)-g(y)|\leq\int_y^x|g'(t)|dt\leq\|g'\|_{3/2}|x-y|^{1/3}.\]
    Moreover, for $\theta=\frac{3\kappa}{2},$ we have
    \[\int_\R s^\theta g(s)ds<\infty.\]
    Indeed, we have
    \begin{align*}
        |s|^{\theta/2}\cR f(s)&=\int_{x\cdot e_1=s}|s|^{\theta/2} f(x)d\sigma(x)=\int_{x\cdot e_1=s}|x\cdot e_1|^{\theta/2} f(x)d\sigma(x)\\
        &\leq \int_{x\cdot e_1=s}|x|^{\theta/2} f(x)d\sigma(x)=R\left(|x|^{\theta/2}f(x)\right),
    \end{align*}
    and thus
    \begin{align*}
        \|s^\theta g(s)\|_{L^1(\R)}&=\left\| s^{\theta/2}\cR f(s)\right\|_{L^2(\R)}^2\leq\left\|\cR\left( |x|^{\theta/2}f(x)\right)\right\|_{L^2(\R)}^2\\
        &\leq\|\cR\|^2\left\| |x|^{\theta/2} f(x)\right\|_{4/3}^2=\|\cR\|^2\left(\int_{\R^2}|x|^{2\theta/3}f(x)^{4/3}dx\right)^{3/2} \\
        &=\|\cR\|^2\left(\int_{\R^2}|x|^{\kappa}f(x)^{4/3}dx\right)^{3/2}<\infty.
    \end{align*}
    Thus, for $r\geq 1$ we have
    \begin{align}\label{eq:tau bound}
        \int_r^\infty g(s)ds\leq \frac{1}{r^\theta}\int_r^\infty s^\theta g(s)ds\leq \frac{1}{r^\theta}\int_0^\infty s^\theta g(s)ds\lesssim\frac{1}{r^\theta}\left(\int_{\R^2}|x|^{\kappa}f(x)^{4/3}dx\right)^{3/2}.
    \end{align}
    Now let $r\geq 1$. Since $g$ is H\"older continuous, there exists $s_r$ such that $g(s_r)=\sup_{r/2\leq s\leq r}g(s)$ and for $|s-s_r|\leq\left(\frac{g(s_r)}{2\|g'\|_{3/2}}\right)^3,$ we have
    \[g(s)\geq g(s_r)-|g(s_r)-g(s)|\geq\frac{g(s_r)}{2},\]
    and by integrating on $I_r:=\left[s_r-\left(\frac{g(s_r)}{2\|g'\|_{3/2}}\right)^3,s_r+\left(\frac{g(s_r)}{2\|g'\|_{3/2}}\right)^3\right]$, we get
    \begin{gather}\label{eq:omega tau}
        \left(\sup_{r/2\leq s\leq r}g(s)\right)^4\leq (2\|g'\|_{3/2})^3\int_{I_r}g(s)ds\leq (2\|g'\|_{3/2})^3\int_{r/2}^{r} g(s)ds.
    \end{gather}
    For $\delta<r/2$, we have 
    \begin{gather*}
        J(r)\sim\frac{1}{\sqrt{r}}\int_{r/2}^r \frac{g(s)}{\sqrt{r-s}}ds=\frac{1}{\sqrt{r}}\left[\int_{r/2}^{r-\delta}\frac{g(s)}{\sqrt{r-s}}ds+\int_{r-\delta}^r \frac{g(s)}{\sqrt{r-s}}ds \right].
    \end{gather*}
    Let $\tau(r):=\int_{r/2}^\infty g(s)ds$ and $\omega(r):=\sup_{r/2\leq s\leq r}g(s)$. We have
    \begin{gather*}
        \int_{r/2}^{r-\delta}\frac{g(s)}{\sqrt{r-s}}ds\leq\delta^{-1/2}\tau(r),
    \end{gather*} 
    and 
    \begin{gather*}
        \int_{r-\delta}^r\frac{g(s)}{\sqrt{r-s}}ds\leq\sup_{r/2\leq s\leq r} g(s)\int_{r-\delta}^r\frac{ds}{\sqrt{r-s}}=\omega(r) 2\delta^{1/2}.
    \end{gather*}
    Now we choose $\delta=\delta(r)=\frac{\tau(r)}{2\omega(r)}$ and combine the last three displays to get
    \begin{gather*}
        J(r)\lesssim\frac{1}{\sqrt{r}}\sqrt{\omega(r)\tau(r)},
    \end{gather*}
    and since by \eqref{eq:omega tau} we have $\omega(r)\lesssim\|g'\|_{3/2}^{3/4}\tau(r)^{1/4}$, \eqref{eq:tau bound} yields
    \begin{gather*}
        J(r)\lesssim\frac{1}{\sqrt{r}}\|g'\|_{3/2}^{3/8}\tau(r)^{5/8}\lesssim\|f\|_{L^{3/2}(\R^2)}^{3/8}\|\nabla f\|_{L^{3/2}(\R^2)}^{3/8} \left(\int |x|^\kappa f(x)^{4/3}dx\right)^{\frac{15}{16}}\frac{1}{r^{\frac{1}{2}+\frac{5\theta}{8}}}.
    \end{gather*}
    Provided that $\theta\geq4/5$ (equivalently $\kappa\geq8/15$), we get the desired bound.
\end{proof}

\section{Concluding Remarks}\label{sec:discussion}

\subsection{Non-Vanishing Below the Energy Threshold}\label{sec:energy threshold}
In Theorem \ref{thm:EL main}, non-vanishing of the limit relies on the initial data being $\varepsilon$-close to the extremiser-manifold $\cE$. It is not obvious to the author whether this threshold-hypothesis is sharp, or even necessary at all. A question to be addressed in future work is whether the rescaled flow converges unconditionally to a critical point of $\Phi$, or whether there exists a threshold $c$ such that if $\Phi(f_0)< c$, then every rescaling of the flow vanishes.

\subsection{The non-radial case of the Euler--Lagrange Flow}\label{sec:non-radial}
Theorem \ref{thm:EL main} crucially requires the initial data $f_0$ of \eqref{eq:EL} to be radial. It is an interesting question whether an analogue of Theorem \ref{thm:EL main} can be established without this assumption. In this setting, all other non-compact symmetries (translations, rotations and anisotropic dilations) must be eliminated by fixing an affine frame at every time $t$. It is reasonable to conjecture the following:
\begin{conjecture}
    Let $f(t)$ be the solution of \eqref{eq:EL} with non-radial initial data $f_0\in W^{1,3/2}(\R^2)$. Then there exist invertible affine maps $\phi(t),\ t\geq0 $ such that $$|\det \phi(t)|^{4/3}f(\phi(t)\cdot,t)$$ converges to a critical point $f_\infty$ of $\Phi$.
\end{conjecture}

\subsection{Higher-Dimensional Radon-Transform Monotonicity}
It is also natural to ask whether the Radon-flow monotonicity results of this paper can be extended in higher-dimensional Radon transforms. Given that Christ \cite{Christ} characterised the extremisers of the Radon transform in every dimension, we expect the results to generalise in a fairly straightforward way. 

\appendix

\section{}

\subsection{Proof of Lemma \ref{le:barbalat}}\label{ap:barbalat}

Let $\varepsilon>0$. Since $\xi\in L^1(\R_+)$, by the Lebesgue differentiation theorem the integral $\int_0^t |\xi(s)|ds$ is absolutely continuous. Hence, there exists $\delta_1>0$ such that if $|t_1-t_2|\leq \delta_1$, then 
\[\int_{t_1}^{t_2} |\xi(t)|dt<\varepsilon/2.\]
Since $\nu\in L^\infty(\R)$, if $|t_1-t_2|\leq\frac{\varepsilon}{2\|\nu\|_\infty}$ we have
\[\int_{t_1}^{t_2} |\nu(s)|ds\leq\|\nu\|_\infty|t_1-t_2|\leq\varepsilon/2. \]
By taking $\delta=\min\{\delta_1,\varepsilon/\|\nu\|_\infty\}$ and $t_1,t_2$ such that $|t_1-t_2|\leq \delta,$ we have
\begin{align*}
    |u(t_2)-u(t_1)|=\left|\int_{t_1}^{t_2}u'(t)dt\right|\leq\int_{t_1}^{t_2}|\xi(t)|dt+\int_{t_1}^{t_2}|\nu(t)|dt\leq \varepsilon.
\end{align*}
Thus, $u$ is uniformly continuous. The second claim follows from the fact that any integrable and uniformly continuous function vanishes at infinity (generalised Barbalat's lemma). \qed

\subsection{Fractional Leibniz Rule}\label{ap:Leibniz}
We state the extension of the Kato-Ponce inequality proven by Grafakos and Oh \cite{Grafakos-Oh}.
\begin{theorem}[Theorem 1, Grafakos-Oh \cite{Grafakos-Oh}]\label{thm:Leibniz}
    Let $\frac{1}{2}<r<\infty,\ 1<p_1,p_2,q_1,q_2\leq \infty$ satisfy $\frac{1}{r}=\frac{1}{p_1}+\frac{1}{q_1}=\frac{1}{p_2}+\frac{1}{q_2}$. Given $s>\max\left\{0,\frac{n}{r}-n\right\}$ or $s\in 2\mathbb{N}$, there exists $C=C(n,s,p_1,p_2,q_1,q_2)<\infty$ such that for all $f,g\in \cS(\R^n)$ we have
    \begin{align}
        &\nonumber\left\|(-\Delta)^{s/2}(fg)\right\|_{L^r(\R^n)}\\
        &\qquad\leq C\left(\left\|(-\Delta)^{s/2}f\right\|_{L^{p_1}(\R^n)}\|g\|_{L^{q_1}(\R^n)}+\|f\|_{L^{p_2}(\R^n)}\left\|(-\Delta)^{s/2}g\right\|_{L^{q_2}(\R^n)} \right).
    \end{align}
\end{theorem}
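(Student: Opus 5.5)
This is the Grafakos--Oh Kato--Ponce inequality, quoted from \cite{Grafakos-Oh}. In the paper it is only invoked in the one-dimensional case $n=1$, $s=1/2$, $r=3/2$, $(p_1,q_1)=(2,6)$, $(p_2,q_2)=(\infty,3/2)$. My plan is the standard paraproduct argument.

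First, decompose the product by Littlewood--Paley theory. Fix a smooth dyadic partition of unity $\sum_j \Delta_j$ and write
\[
fg=\sum_j \Delta_j f\, S_{j-2}g+\sum_j S_{j-2}f\,\Delta_j g+\sum_{|j-k|\le 1}\Delta_j f\,\Delta_k g=:\Pi_1+\Pi_2+\Pi_3,
\]
where $S_j=\sum_{k\le j}\Delta_k$.

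Second, handle the two low--high pieces. In $\Pi_1$ each summand has Fourier support in an annulus of size $2^j$, so $(-\Delta)^{s/2}$ acts on it like multiplication by $2^{js}$. More precisely, $(-\Delta)^{s/2}\Pi_1=\sum_j \tilde\Delta_j\big(2^{js}\Delta_j f\,S_{j-2}g\big)$ with $\tilde\Delta_j$ a slightly enlarged projection. The Fefferman--Stein vector-valued maximal inequality and the Littlewood--Paley square function characterisation of $L^{p}$ then give
\[
\|(-\Delta)^{s/2}\Pi_1\|_{r}\lesssim \Big\|\Big(\sum_j|2^{js}\Delta_jf|^2\Big)^{1/2}\sup_j|S_{j-2}g|\Big\|_r\lesssim \|(-\Delta)^{s/2}f\|_{p_1}\|Mg\|_{q_1},
\]
by H\"older. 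The maximal function $M$ is bounded on $L^{q_1}$ since $q_1>1$; when $q_1=\infty$ one uses $|S_jg|\le\|g\|_\infty$ directly. The piece $\Pi_2$ is symmetric and produces the second term. For $r\le1$ one replaces the $L^r$ square function bound by its Hardy-space $H^r$ analogue; this is legitimate because $\Pi_1$ and $\Pi_2$ have annular frequency pieces.

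Third, and this is the main obstacle, treat the high--high piece $\Pi_3$. Its summands have Fourier support only in balls $|\xi|\lesssim 2^j$, not annuli, so there is no cancellation to exploit. One writes
\[
(-\Delta)^{s/2}\Pi_3=\sum_k \Delta_k\sum_{j\ge k-3}\Delta_jf\,\tilde\Delta_j g,
\]
and transfers $2^{ks}=2^{js}\,2^{-(j-k)s}$, which leaves a geometric factor $2^{-(j-k)s}$. For $r>1$ this sum is controlled by Minkowski and the square-function bound. For $r\le1$ one needs $s>n/r-n$ so that the factor $2^{-(j-k)(s-n/r+n)}$ still sums when passing through $H^r$ (or via Peetre maximal functions). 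This is exactly where the hypothesis on $s$ is used. The even-integer case $s\in2\mathbb N$ follows instead from the ordinary Leibniz rule combined with Gagliardo--Nirenberg interpolation.

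In our application $r=3/2>1$, so only the easy regime is needed. Since $\phi$ is smooth and compactly supported, the inequality extends from Schwartz functions to $\phi A_2$ by density.
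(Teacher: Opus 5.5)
The paper gives no proof of this statement. It quotes Grafakos--Oh and uses the result as a black box, only with $n=1$, $s=1/2$, $r=3/2$, $(p_1,q_1)=(2,6)$, $(p_2,q_2)=(\infty,3/2)$. Your paraproduct outline is the standard route, but as written it does not prove the statement in full. The hypotheses allow $p_1=\infty$ (and symmetrically $q_2=\infty$), where the factor carrying the derivative is measured in $L^\infty$. At that endpoint your key step
\[
\Big\|\Big(\sum_j|2^{js}\Delta_jf|^2\Big)^{1/2}\Big\|_{L^{p_1}}\lesssim\|(-\Delta)^{s/2}f\|_{L^{p_1}}
\]
is false, because the Littlewood--Paley square function is not bounded on $L^\infty$. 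You handled $q_1=\infty$, where the bounded factor is the low-frequency one, but not $p_1=\infty$. Nor can $\Pi_1$ be shifted onto the other term of the right-hand side. If $f$ oscillates at frequency $N$ against a fixed bump $g$, then $\|(-\Delta)^{s/2}\Pi_1\|_{L^r}\sim N^s$, while $\|f\|_{L^{p_2}}\|(-\Delta)^{s/2}g\|_{L^{q_2}}$ stays bounded. So the derivative must fall on the high-frequency factor.

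What is missing at this endpoint is a genuinely bilinear estimate with an $L^\infty$ input; note the endpoint forces $q_1=r\in(1,\infty)$. For $\Pi_1$, write $(-\Delta)^{s/2}\Pi_1=T_\sigma\big((-\Delta)^{s/2}f,g\big)$. The symbol $\sigma$ is $|\xi+\eta|^s|\xi|^{-s}$ times the paraproduct cut-offs, and it is of Coifman--Meyer type because $|\xi+\eta|\sim|\xi|$ on its support. The Coifman--Meyer theorem holds on $L^\infty\times L^r\to L^r$ for $1<r<\infty$. Equivalently, replace your H\"older step by the $\mathrm{BMO}$ paraproduct (Carleson-measure) bound $\|\sum_j(\Delta_jF)(S_{j-2}g)\|_{L^r}\lesssim\|F\|_{\mathrm{BMO}}\|g\|_{L^r}$.

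For $\Pi_3$ at the same endpoint, duality suffices. Pair with $h\in L^{r'}$ and use the pointwise bound $|2^{js}\Delta_jf|\lesssim\|(-\Delta)^{s/2}f\|_{L^\infty}$. Then put the two square functions on $h$ and $g$; the factor $2^{-(j-k)s}$ sums since $s>0$.

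None of this affects the paper's application, where $p_1=2$ and $q_2=3/2$ are finite, so your argument covers the case actually used. One correction to your last sentence: the density step concerns $A_2$, not $\phi$. You need Schwartz $g_m\to A_2$ in $L^6$ with $(\partial_r)^{1/2}g_m\to(\partial_r)^{1/2}A_2$ in $L^{3/2}$. You can get these by applying $I_{1/2}$ to Schwartz approximants of $(\partial_r)^{1/2}A_2=c(I+H)\tilde g\in L^{3/2}(\R)$ whose Fourier transforms vanish near the origin, then using Hardy--Littlewood--Sobolev.
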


\subsection{Compact Embeddings of Triebel-Lizorkin Spaces}\label{ap:Triebel-Lizorkin}
The literature on compact embeddings of function spaces, starting with the classical result of Rellich and Kondrachov (which may be found as Theorem $9.3$ in \cite{Brezis}) is vast. Here we use Triebel's theorem for compact embeddings of $F^s_{p,r}(\Omega)$ for bounded Lipschitz domains.
\begin{theorem}[Triebel \cite{Triebel}, Theorem $3.2.3$]\label{thm:Triebel general}
Let $\Omega\subset\R^n$ be a bounded Lipschitz domain. The identity map $i:F^{s_1}_{p_1,r_1}(\Omega)\to F^{s_2}_{p_2,r_2}(\Omega)$ is compact (maps bounded sequences to precompact ones) if and only if
\[s_1\geq s_2\qquad \mbox{ and }\qquad s_1+\frac{n}{p_1}>s_2+\frac{n}{p_2}.\]
\end{theorem}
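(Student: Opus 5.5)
The plan is to prove the two directions of Theorem~\ref{thm:Triebel general} separately, working throughout with a bounded linear extension operator $E:F^{s}_{p,r}(\Omega)\to F^{s}_{p,r}(\R^n)$. Such an operator exists for bounded Lipschitz $\Omega$ (Rychkov's universal extension). Composing with multiplication by a fixed cut-off $\chi\in C_c^\infty$ equal to $1$ near $\overline\Omega$, it suffices to study the identity map on distributions supported in a fixed ball $B$. For these I will use a Littlewood--Paley (or wavelet/atomic) decomposition $u=\sum_{j\ge0}\Delta_j u$, where each $\Delta_j u$ is frequency-localised at scale $2^j$ and essentially localised in a slightly larger ball.

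\textbf{Sufficiency.} Fix a bounded sequence $u_k$ in $F^{s_1}_{p_1,r_1}(\Omega)$ and split $Eu_k=P_{\le J}u_k+P_{>J}u_k$. The low-frequency part $P_{\le J}u_k$ is smooth, with all derivatives bounded uniformly in $k$ on compact sets, and its mass outside a large ball is small uniformly in $k$ because of the support of $\chi$. By Arzel\`a--Ascoli it therefore has a subsequence converging in every $F^{s_2}_{p_2,r_2}$ norm on $\Omega$. For the high-frequency part, Bernstein's inequality $\|\Delta_j u\|_{p_2}\lesssim 2^{jn(1/p_1-1/p_2)_+}\|\Delta_j u\|_{p_1}$ (combined with H\"older on the bounded set when $p_2<p_1$) gives
\[
\|P_{>J}u\|_{F^{s_2}_{p_2,r_2}}\lesssim 2^{-J\gamma}\,\|u\|_{F^{s_1}_{p_1,r_1}},\qquad \gamma:=s_1-s_2-n\Bigl(\tfrac1{p_1}-\tfrac1{p_2}\Bigr)_+ .
\]
The changes in the fine index $r$ cost at most an $\varepsilon$-loss, which is absorbed by the Jawerth--Franke embeddings. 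A diagonal argument in $J$ then yields a Cauchy subsequence, provided $\gamma>0$.

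\textbf{Reconciling $\gamma>0$ with the stated hypotheses.} This is the step I expect to be the main obstacle. When $s_1>s_2$ and $p_1\ge p_2$, the hypothesis gives $\gamma=s_1-s_2>0$ and sufficiency follows. In the remaining cases the stated condition $s_1+n/p_1>s_2+n/p_2$ does not by itself force $\gamma>0$; this includes the boundary case $s_1=s_2$, where it only says $p_1<p_2$. My first step here is to test the boundary case directly with wave packets $u_k=2^{-k(s_1-n/p_1)}\psi(2^k x)$ with $\psi$ a fixed Schwartz bump, together with modulated bumps $e^{i2^k x_1}\phi(x)$. These are bounded in $F^{s_1}_{p_1,r_1}$ but not precompact in $F^{s_2}_{p_2,r_2}$ whenever $\gamma\le0$. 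I expect this to show that the criterion must be read in Triebel's normalisation, namely $s_1-s_2>\max\{0,\,n(1/p_1-1/p_2)\}$, equivalently $s_1-n/p_1>s_2-n/p_2$ together with $s_1>s_2$. In that reading the sign in the second inequality is a transcription issue, and the case $s_1=s_2$ is excluded rather than proved. Under this reading, the paper's application ($n=1$, $s_1=\tfrac12$, $p_1=\tfrac32$, $s_2=0$, $F^0_{p_2,2}=L^{p_2}$) gives exactly $\tfrac12-\tfrac23>-\tfrac1{p_2}$, that is $p_2<6$, matching Theorem~\ref{thm:Triebel}.

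\textbf{Necessity.} I will use the same two test families. The modulated bumps $e^{i2^k x_1}\phi(x)$, with $\phi$ supported inside $\Omega$ and normalised in $F^{s_1}_{p_1,r_1}$, converge weakly to $0$. If $s_1\le s_2$, their $F^{s_2}_{p_2,r_2}$ norms stay bounded below, so compactness fails; this forces $s_1>s_2$. The concentrating bumps $2^{-k(s_1-n/p_1)}\psi(2^k(x-x_0))$ with $x_0\in\Omega$ have $F^{s_2}_{p_2,r_2}$ norm comparable to $2^{-k\gamma'}$ with $\gamma'=(s_1-n/p_1)-(s_2-n/p_2)$. If $\gamma'\le0$ they again tend weakly to $0$ without converging in norm, which forces the second, dimensional, inequality.
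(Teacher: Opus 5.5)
The paper gives no proof of this statement. It is quoted from Triebel and used only through Theorem~\ref{thm:Triebel}. The substantive point is the one you found: as transcribed, the criterion is false.

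For instance, $s_1=s_2=0$, $p_1=2$, $p_2=4$, $r_1=r_2=2$ satisfies both displayed inequalities ($0\ge 0$ and $n/2>n/4$). Yet $F^0_{2,2}(\Omega)=L^2(\Omega)$ does not even embed continuously into $F^0_{4,2}(\Omega)=L^4(\Omega)$. Applied to $H^{1/2,3/2}([a,b])\to L^r([a,b])$, the displayed condition only asks $\tfrac76>\tfrac1r$, so it would give compactness for every finite $r$, which fails for $r\ge 6$.

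The correct criterion is $s_1-s_2>n\max\{0,\tfrac1{p_1}-\tfrac1{p_2}\}$, that is, $s_1>s_2$ and $s_1-\tfrac n{p_1}>s_2-\tfrac n{p_2}$. This is exactly your reading, and it is this version that yields the paper's range $r<6$. One small caveat for that application: $F^0_{p,2}=L^p$ only for $1<p<\infty$. Since $F^0_{1,2}$ is the local Hardy space, the endpoint $r=1$ of Theorem~\ref{thm:Triebel} should be obtained from $r>1$ by H\"older on $[a,b]$.

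For the corrected statement, your outline is the standard extension plus Littlewood--Paley argument. It is a self-contained alternative to the paper's bare citation, and it shows where each condition comes from: modulated packets detect $s_1>s_2$, and concentrating bumps detect the dimensional condition.

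Sufficiency is fine, modulo two remarks.
\begin{enumerate}
\item H\"older ``on the bounded set'' cannot be applied to $\Delta_j u$ itself, which is not compactly supported. Instead, multiply $P_{>J}u$ by a cut-off $\chi'\equiv 1$ on $\Omega$. Then use that $\chi'$ is a pointwise multiplier on $F^{s_2}_{p_1,r_2}$, and that $F^{s_2}_{p_1,r_2}\hookrightarrow F^{s_2}_{p_2,r_2}$ for distributions supported in a fixed compact set when $p_2<p_1$.
\item No Jawerth--Franke embedding is needed. Since $\gamma>0$, you may sacrifice $\varepsilon\in(0,\gamma)$ and use only the elementary embeddings $F^{s}_{p,q}\hookrightarrow B^{s}_{p,\infty}$ and $B^{s+\varepsilon}_{p,\infty}\hookrightarrow F^{s}_{p,q}$ around the blockwise Bernstein estimate. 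This gives the tail bound with $2^{-J(\gamma-\varepsilon)}$.
\end{enumerate}

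In the necessity part, first check that your lower bounds transfer to $\Omega$. For $v$ supported in a compact $K\subset\Omega$ (extended by zero), the quotient norm on $\Omega$ dominates the norm on $\R^n$: if $\psi\in C_c^\infty(\Omega)$ equals $1$ on $K$, every extension $U$ of $v$ satisfies $\psi U=v$.

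More importantly, the concentrating family fails as written for a general Schwartz bump when $s_1-\tfrac n{p_1}\le -n$. The low-frequency block contributes about $2^{-kn}|\int\psi|$, so the scaling $\|\psi(2^k\cdot)\|_{F^{s}_{p,q}}\sim 2^{k(s-n/p)}$ breaks down. The normalised bumps then converge to a multiple of $\delta_{x_0}$ rather than to $0$. For example, take $n=1$, $(s_1,p_1)=(-1,2)$, $(s_2,p_2)=(-1.1,4)$: the embedding is not even bounded, yet the normalised plain bumps converge in the target to $c\,\delta_{x_0}$. The fix is to take $\psi=\Delta^M\psi_0$ with $\psi_0\in C_c^\infty$ and $M$ large. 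The modulated family needs no such adjustment, since its pairings with test functions decay rapidly for any $s_1$.
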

As Triebel remarks (and is indeed obvious from the definitions) in Paragraph $3.2.2.$, for any compact interval $I$, the space $F^s_{p,2}(I)$ coincides with the Bessel-potential space $H^{s,p}(I)$, defined as the quotient spaces of $F^s_{p,2}(\R)$ and $H^{s,p}(\R)$ respectively modulo the restriction map $U\mapsto U|_I$.

\subsection{The Hardy--Littlewood--Sobolev Inequality}\label{ap:HLS}
There are multiple references for the Hardy--Littlewood--Inequality. Here we use the one in \cite{Stein-Singular}, stated in terms of Riesz Potentials.
\begin{theorem}[Stein \cite{Stein-Singular}, Chapter $\mathrm{V}$, Theorem 1]
We have
\begin{equation}\label{eq:HLS}
    \|I_\alpha f\|_{L^q(\R^n)}\leq C_{\alpha,p,q}\|f\|_{L^p(\R^n)},
\end{equation}
for $0<\alpha<n,\ 1<p<q<\infty,\ \frac{1}{q}=\frac{1}{p}-\frac{\alpha}{n}$.
    
\end{theorem}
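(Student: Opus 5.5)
We follow Hedberg's pointwise approach: bound $I_\alpha f$ pointwise by a power of the Hardy--Littlewood maximal function $Mf$ times a power of $\|f\|_{L^p}$, and then use the $L^p$-boundedness of $M$ for $p>1$. Since $I_\alpha f(x)=c_{n,\alpha}\int_{\R^n} f(x-y)|y|^{\alpha-n}\,dy$ and $|I_\alpha f|\leq I_\alpha|f|$, we may assume $f\geq 0$. By density we may also take $f$ simple with compact support, so that all quantities below are finite.

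Fix $x$ and a radius $R>0$ to be chosen later, and split the kernel into the parts $|y|<R$ and $|y|\geq R$.

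\emph{Near part.} Decompose $\{|y|<R\}$ into the dyadic shells $2^{-k-1}R\leq|y|<2^{-k}R$, $k\geq 0$. On the $k$-th shell $|y|^{\alpha-n}\lesssim (2^{-k}R)^{\alpha-n}$, and the shell has volume $\lesssim (2^{-k}R)^n$. Hence
\[
\int_{|y|<R} f(x-y)|y|^{\alpha-n}\,dy\lesssim \sum_{k\geq0}(2^{-k}R)^{\alpha}\,Mf(x)\lesssim R^\alpha Mf(x),
\]
where the geometric series converges because $\alpha>0$.

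\emph{Far part.} Apply H\"older's inequality:
\[
\int_{|y|\geq R} f(x-y)|y|^{\alpha-n}\,dy\leq \|f\|_{p}\Big(\int_{|y|\geq R}|y|^{(\alpha-n)p'}dy\Big)^{1/p'}\lesssim R^{\alpha-n/p}\|f\|_p .
\]
The integral converges exactly when $(n-\alpha)p'>n$, i.e.\ $\alpha<n/p$. This holds because $1/q=1/p-\alpha/n>0$.

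\emph{Optimising $R$.} Choose $R=(\|f\|_p/Mf(x))^{p/n}$, which balances the two bounds. The cases $Mf(x)=0$ (then $f\equiv0$) and $Mf(x)=\infty$ (a null set) are trivial. This gives the pointwise estimate
\[
I_\alpha f(x)\lesssim Mf(x)^{1-\alpha p/n}\,\|f\|_p^{\alpha p/n}.
\]

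\emph{Integrating.} Raise both sides to the power $q$. The relation $1/q=1/p-\alpha/n$ gives exactly $q(1-\alpha p/n)=p$. Therefore
\[
\|I_\alpha f\|_q^q\lesssim \|f\|_p^{\alpha pq/n}\int (Mf)^p\lesssim \|f\|_p^{\alpha pq/n+p}=\|f\|_p^{q},
\]
where the second inequality is the Hardy--Littlewood maximal theorem (valid since $p>1$), and the final equality uses $\alpha pq/n+p=q$, which is again the scaling relation. This yields \eqref{eq:HLS}, and the estimate extends from the dense class to all $f\in L^p(\R^n)$ by density and Fatou's lemma.

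There is no serious obstacle. The only points needing care are the convergence of the far-part integral (which is where $q<\infty$, i.e.\ $\alpha<n/p$, enters) and the use of $p>1$ in the maximal theorem. These hypotheses are also what make the estimate fail at the endpoints.
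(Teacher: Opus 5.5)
Your argument is correct, but it takes a different route from the source. The paper gives no proof of its own and simply cites Stein.

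\textbf{Stein's route.} Stein splits the kernel $|y|^{\alpha-n}=K_1+K_\infty$ at a radius $\mu$ chosen in terms of the \emph{level} $\lambda$ rather than the point $x$. Normalising $\|f\|_p=1$, the tail bound is the same H\"older estimate you use: $\|K_\infty\|_{p'}\sim\mu^{\alpha-n/p}=\mu^{-n/q}$, finite precisely because $\alpha<n/p$. This gives $\|K_\infty*f\|_\infty\le\lambda$ once $\mu\sim\lambda^{-q/n}$. For the local part, Chebyshev and Young give $|\{|K_1*f|>\lambda\}|\le\|K_1\|_1^p\lambda^{-p}\sim\mu^{\alpha p}\lambda^{-p}\sim\lambda^{-q}$. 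The result is weak type $(p,q)$ for every $1\le p<n/\alpha$. Strong type for $p>1$ then follows by off-diagonal Marcinkiewicz interpolation between two such exponent pairs.

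\textbf{Your route.} You choose the splitting radius pointwise through $Mf(x)$. This produces Hedberg's inequality $I_\alpha f\lesssim (Mf)^{1-\alpha p/n}\|f\|_p^{\alpha p/n}$, and the strong-type bound then drops out in one line from the maximal theorem. No interpolation is applied to $I_\alpha$ itself; the interpolation or covering argument is hidden inside the maximal theorem.

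\textbf{What each buys.} Stein's splitting also yields, at no extra cost, the absolute a.e.\ convergence of the defining integral and the weak-type endpoint at $p=1$. Your pointwise bound recovers that endpoint just as easily from the weak $(1,1)$ estimate for $M$. It is also the more flexible tool, for example for weighted or Orlicz variants.

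Both arguments take as their starting point the identification of the paper's multiplier $(-\Delta)^{-\alpha/2}$ with convolution against $c_{n,\alpha}|y|^{\alpha-n}$. That identification is standard, but it is the one step you use without comment.
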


\subsection{A bound for $\cR$}\label{ap:Radon bound}

Using \eqref{eq:fourier slice} and the Hardy--Littlewood--Sobolev inequality, we prove the following
\begin{proposition}
    For all $f\in C_0^\infty(\R^2)$ we have
    \begin{equation}\label{eq:Radon 4/3->2}
        \|\cR f\|_{L^{2}(\bbS^1\times\R)}\leq 2{ C_{\mathrm{HLS}}}{\pi^{1/2}}\|f\|_{L^{4/3}(\R^2)},
    \end{equation}
    where $C_{\mathrm{HLS}} $ is the sharp constant in \eqref{eq:HLS}.
\end{proposition}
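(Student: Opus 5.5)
The plan is to write $\|\cR f\|_{L^2(\bbS^1\times\R)}^2$ as a multiple of $\|I_{1/2}f\|_{L^2(\R^2)}^2$ using the Fourier slice theorem, and then apply the Hardy--Littlewood--Sobolev inequality \eqref{eq:HLS} once. The exponents fit: in $\R^2$ with $\alpha=1/2$, $p=4/3$, $q=2$ we have $\tfrac12=\tfrac34-\tfrac14$, so $\|I_{1/2}f\|_2\le C_{\mathrm{HLS}}\|f\|_{4/3}$. This is why the constant involves a single factor of $C_{\mathrm{HLS}}$, and equivalently why the right-hand side of \eqref{eq:Radon 4/3->2} is quadratic in $C_{\mathrm{HLS}}$ once squared. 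The same identity is the quadratic-form version of \eqref{eq:fourier slice}: $\|\cR f\|_2^2=\int f\,\cR^*\cR f=c\int f\,(-\Delta)^{-1/2}f=c\|(-\Delta)^{-1/4}f\|_2^2$.

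First step: fix $f\in C_0^\infty(\R^2)$ and $\alpha\in\bbS^1$. The Fourier slice theorem says that the one-dimensional Fourier transform in $s$ of $\cR f(\alpha,\cdot)$ equals $\widehat f(\sigma\alpha)$. This follows by integrating $e^{-2\pi i s\sigma}$ over $s$ and writing $x=s\alpha+t\alpha^\perp$, the same change of variables used in Subsection \ref{subsec:sobolev}. Plancherel in $s$ for each $\alpha$, followed by integration over $\alpha$, then gives
\[
\|\cR f\|_{L^2(\bbS^1\times\R)}^2=\int_{\bbS^1}\int_\R|\widehat f(\sigma\alpha)|^2\,d\sigma\,d\alpha .
\]

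Second step: pass to polar coordinates $\xi=\sigma\alpha$. Each $\xi\neq0$ is reached twice, by $(\sigma,\alpha)$ and $(-\sigma,-\alpha)$, and $d\xi=|\sigma|\,d\sigma\,d\alpha$. Hence the right-hand side equals
\[
2\int_{\R^2}\frac{|\widehat f(\xi)|^2}{|\xi|}\,d\xi=4\pi\int_{\R^2}(2\pi|\xi|)^{-1}|\widehat f(\xi)|^2\,d\xi=4\pi\|I_{1/2}f\|_{L^2(\R^2)}^2,
\]
using the notation convention that $I_{1/2}=(-\Delta)^{-1/4}$ has symbol $(2\pi|\xi|)^{-1/2}$. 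Taking square roots and applying \eqref{eq:HLS} gives $\|\cR f\|_2=2\pi^{1/2}\|I_{1/2}f\|_2\le 2\pi^{1/2}C_{\mathrm{HLS}}\|f\|_{4/3}$, which is \eqref{eq:Radon 4/3->2}.

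Main obstacle: there is no real analytic difficulty. The delicate point is bookkeeping of constants: the double cover of $\R^2\setminus\{0\}$ by $\bbS^1\times\R$, the $2\pi$ in the Fourier normalisation, and the $|\sigma|$ Jacobian. I would double-check these by testing on a Gaussian. Finally, the inequality extends by density from $C_0^\infty$ to $L^{4/3}(\R^2)$, which is how it is used (with the constant $C$) in Lemma \ref{lemma:two scale} and Proposition \ref{prop:A decay}. For nonnegative $f$ one can alternatively use monotone convergence, since $\cR$ is a positive operator.
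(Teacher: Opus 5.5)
Your proof is correct and is essentially the paper's argument. You use the Fourier slice theorem with Plancherel in $s$, then the double-cover polar change of variables to get $\|\cR f\|_{L^2(\bbS^1\times\R)}^2=4\pi\|(-\Delta)^{-1/4}f\|_{L^2(\R^2)}^2$, and finally a single application of the Hardy--Littlewood--Sobolev inequality with $n=2$, $\alpha=1/2$, $p=4/3$, $q=2$; your constant bookkeeping matches the paper's exactly, and your density remark is a harmless addition that the paper uses implicitly later.
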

\begin{proof}
    By Plancherel's identity and \eqref{eq:fourier slice}, we have
    \begin{align*}
        \int_{\bbS^1}\int_{\R}|\cR f(\alpha,s)|^2 ds\, d\alpha&= \int_{\bbS^1} \int_{ \R}|\cR_\alpha f(s)|^2\,ds\,d\alpha    =\int_{\bbS^1} \int_{\R}\left|\widehat{\cR_\alpha f}(\sigma)\right|^2\,d\sigma\, d\alpha\\
        &={2\pi}\int_{\bbS^1}\int_{\R}|(2\pi\sigma)^{-1/2}\widehat{f}(\sigma\alpha)|^2|\sigma|d\sigma\, d\alpha\\
        &=4{\pi}\int_{\bbS^1}\int_0^\infty|(2\pi\sigma)^{-1/2}\widehat{f}(\sigma\alpha)|^2\sigma \,d\sigma\, d\alpha\\
        &=4{\pi}\int_{\R^2}|2\pi\xi|^{-1}|\widehat{f}(\xi)|^2\,d\xi=4{\pi}\left\|(-\Delta)^{-1/4}f\right\|_{L^{2}(\R^2)}^2.
    \end{align*}
    By \eqref{eq:HLS} for $n=2,\alpha=1/2,q=2,p=4/3$, we have
    \[\left\|(-\Delta)^{-1/4}f\right\|_{L^{2}(\R^2)}\leq C_{\mathrm{HLS}}\|f\|_{L^{4/3}(\R^2)}. \]
\end{proof}

\subsection{Gronwall's Lemma}\label{ap:Gronwall}
Gronwall's Lemma \cite{Gronwall} --- in its simplest possible formulation --- asserts the following:
\begin{lemma}\label{lemma:Gronwall}
Suppose $u(t),\beta(t)$ are continuous functions and $\alpha(t)\geq0$ is a non-decreasing function such that
\[u(t)\leq \alpha(t)+\int_0^t \beta(s)u(s)ds,\]
for all $t\geq0$. Then 
\[u(t)\leq\alpha(t) e^{\int_0^t \beta(s)ds}, \]
for all $t\geq 0$.
\end{lemma}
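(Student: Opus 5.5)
We would prove the statement by the standard integrating-factor argument applied to the integral term rather than to $u$ itself. One caveat first: the argument needs $\beta\geq 0$, which the statement does not say explicitly (it is the case in every application we have in mind). Without it, the step of multiplying the hypothesis by $\beta(t)$ fails, and the conclusion can fail too. So we assume $\beta\geq0$ throughout.

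Set $B(t):=\int_0^t\beta(s)\,ds$ and
\[
v(t):=\int_0^t\beta(s)u(s)\,ds .
\]
Since $u$ and $\beta$ are continuous, $v$ is $C^1$ with $v(0)=0$ and $v'=\beta u$. The hypothesis reads $u\leq \alpha+v$. Multiplying by $\beta\geq0$ gives the differential inequality
\[
v'(t)\leq \beta(t)\alpha(t)+\beta(t)v(t).
\]
Multiplying by the integrating factor $e^{-B(t)}$, this becomes
\[
\frac{d}{dt}\left(e^{-B(t)}v(t)\right)\leq \beta(t)\alpha(t)e^{-B(t)}.
\]
Integrating from $0$ to $t$ and using $v(0)=0$ yields
\[
v(t)\leq \int_0^t \alpha(s)\beta(s)e^{B(t)-B(s)}\,ds .
\]
The integral on the right exists because $\alpha$ is monotone, hence bounded and measurable on $[0,t]$.

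Next we use that $\alpha$ is non-decreasing, so $\alpha(s)\leq\alpha(t)$ for $s\leq t$. The integrand is non-negative, so
\[
v(t)\leq \alpha(t)\int_0^t\beta(s)e^{B(t)-B(s)}\,ds=\alpha(t)\left(e^{B(t)}-1\right),
\]
where the last integral is computed exactly as $\left[-e^{B(t)-B(s)}\right]_{s=0}^{s=t}$. Substituting into $u\leq\alpha+v$ gives
\[
u(t)\leq \alpha(t)+\alpha(t)\left(e^{B(t)}-1\right)=\alpha(t)e^{\int_0^t\beta(s)\,ds},
\]
which is the claim.

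There is no genuine obstacle here. The only points needing care are the sign assumption $\beta\geq0$, used when multiplying the hypothesis by $\beta$, and the monotonicity of $\alpha$, used to pull $\alpha(t)$ out of the integral. Non-negativity of $\alpha$ is not actually needed for this chain of inequalities.
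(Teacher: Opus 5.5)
The paper gives no proof of this lemma. It is only quoted in the appendix with a citation, and it is never invoked anywhere in the body. Your integrating-factor argument is the standard proof, and it is correct.

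Your caveat about $\beta$ is also correct, and it matters: the statement as printed is false without $\beta\geq 0$. Take $\beta\equiv-1$, $\alpha\equiv 1$ and $u(t)=2(t-1)e^{-t}$. Then $\int_0^t u(s)\,ds=-2te^{-t}$, so the hypothesis reads $2(t-1)e^{-t}\leq 1+2te^{-t}$, which holds for all $t\geq 0$. The conclusion $u(t)\leq e^{-t}$, however, fails for every $t>3/2$.

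You are likewise right that $\alpha\geq 0$ is superfluous. The only positivity used in your second step is that of $\beta(s)e^{B(t)-B(s)}$. So the phrase ``the integrand is non-negative'' should refer to that factor, not to the whole integrand $\alpha(s)\beta(s)e^{B(t)-B(s)}$.
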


\printbibliography

\end{document}